\documentclass[11pt,reqno]{amsart}

\usepackage[section]{multi-dim}

\title[Persistent anisotropy in multi-dimensional branching Brownian motion]{Persistent anisotropy in multi-dimensional\\ branching Brownian motion}
\author{Cole Graham, Leonid Mytnik, Lenya Ryzhik}

\contact{CG}{Department of Mathematics, University of Wisconsin--Madison, Madison, WI, USA}{graham@math.wisc.edu} %
\contact{LM}{Faculty of Data and Decision Sciences, Technion, Haifa, Israel}{leonidm@technion.ac.il} %
\contact{LR}{Department of Mathematics, Stanford University, Stanford, CA, USA}{ryzhik@stanford.edu}

\date{\today}

\begin{document}
\begin{abstract}
  We study angular variation in the frontier of branching Brownian motion (BBM) in multiple dimensions.
  The time average of this frontier is tied to the long-time limit of the critical derivative martingale, which is a random integrable function on the sphere.
  We show that this function is nowhere locally bounded.
  As a consequence, BBM exhibits strong persistent anisotropy: there are dense arbitrarily large gaps between the BBM frontier in different directions.
\end{abstract}

\maketitle

\section{Introduction}

Branching Brownian motion (BBM) is a spatial branching process in which particles randomly move and reproduce.
As more particles are born, the population expands outward along a moving frontier.
The statistics of BBM near this frontier have a universal character common to a large class of ``log-correlated fields''~\cite{Arguin17}.
Consequently, the position and nature of the BBM frontier have attracted a great deal of attention in one dimension~\cite{Bramson78,Bramson83,LalSel87,ArgBovKis13a,AidBerBruShi13}.
Recently, there has been a surge of interest in the extremal particles of branching Brownian motion in multiple spatial dimensions~\cite{Mallein15,StaBerMal21,KimLubZei23,BerKimLubMalZei24,KimZei25}.
In this spirit, we investigate a fundamentally multi-dimensional phenomenon: the angular variation in the BBM frontier.
We show that when~$d \geq 2$, BBM advances considerably farther in some directions than in others.

We study branching Brownian motion informally defined through the following inductive formula.
An initial particle performs a Brownian motion from the origin in $\R^d$ with diffusivity matrix $\sqrt{2} \op{Id}$.
At an independent standard exponential time, the particle splits into two children, each of which independently performs the above process from the parent's position.
We thus obtain a randomly growing population~$\m{P}_t$ of particles at a given time $t$.
For each particle $p \in \m{P}_t$, let $X_t(p) \in \R^d$ denote its position.
For rigorous constructions of BBM, see \cite{IkeNagWat68,HarHar09,Chauvin91}.

Fix a direction $\theta \in S^{d-1}$.
Then $\theta \cdot X$ is the projection of the branching Brownian motion in $\R^d$ onto direction $\theta$.
It agrees in law with a one-dimensional BBM.
To study extremal behavior, consider the maximum
\begin{equation*}
  M_t(\theta) \coloneqq \max\{\theta \cdot X_t(p) : p \in \m{P}_t\}.
\end{equation*}
Geometrically, $M_t(\theta)$ is the displacement of the supporting hyperplane in direction~$\theta$ of the point set $\{X_t(p)\}_{p \in \m{P}_t}$.

Bramson's seminal 1D work~\cite{Bramson78} shows that for each fixed $\theta \in S^{d-1}$,
\begin{equation}
  \label{eq:Bramson}
  M_t(\theta) = 2t - \frac{3}{2} \log t + \m{O}_\P(1),
\end{equation}
where $\m{O}_\P(1)$ denotes a tight random variable depending on $t$ and $\theta$.
This tight remainder can be expressed as the sum of a time-independent random shift and an asymptotically stationary mean-zero fluctuation.
Lalley and Sellke~\cite{LalSel87} first showed the essence of this decomposition, and asymptotic stationarity can be traced to the convergence of the extremal point process due independently to Arguin, Bovier, and Kistler~\cite{ArgBovKis13a} and A\"id\'ekon, Berestycki, Brunet, and Shi~\cite{AidBerBruShi13}.

We are interested in the constant-in-time random shift, which determines the persistent ``bias'' of the BBM in direction $\theta$.
One can isolate this shift by averaging over time to suppress the stationary fluctuation.
Flath recently established the following ergodic theorem~\cite{Flath25}: almost surely,
\begin{equation}
  \label{eq:ergodic}
  \frac{1}{T} \int_0^T \big[M_t(\theta) - 2 t + \tfrac{3}{2} \log t\big] \d t \to \log Z_\infty(\theta) + \f{c} \quad \text{as } T \to \infty.
\end{equation}
Here $\f{c}$ is a deterministic constant and $Z_\infty(\theta)$ is the almost-surely positive limit of the critical derivative martingale in direction $\theta$:
\begin{equation}
  \label{eq:deriv}
  Z_t(\theta) \coloneqq \sum_{p \in \m{P}_t} [2t - \theta \cdot X_t(p)] \e^{-[2t - \theta \cdot X_t(p)]}.
\end{equation}
We note that \eqref{eq:ergodic} is an integrated form of an ergodic theorem conjectured in~\cite{LalSel87} and established by Arguin, Bovier, and Kistler~\cite{ArgBovKis13b}.
These works concern 1D BBM, so \eqref{eq:ergodic} holds for each fixed $\theta$ due to equality in law with the projected process~$\theta \cdot X$.
It then follows from Fubini's theorem that with probability one, there exists a random full-measure subset of~$S^{d-1}$ within which~\eqref{eq:ergodic} holds for all $\theta$.
We can likewise ensure that $Z_t(\theta)$ converges to $Z_\infty(\theta) > 0$ in the same subset.

Due to \eqref{eq:ergodic}, the derivative martingale limit $Z_\infty$ determines the persistent bias of the BBM frontier in direction $\theta$.
Thus, the size and regularity of $Z_\infty$ governs the relative advance of the branching process in different directions.
Stasi\'{n}ski, Berestycki, and Mallein have shown that $Z_\infty \in L^1(S^{d-1})$ almost surely~\cite{StaBerMal21}.
Our main result shows that $Z_\infty$ is not substantially more regular.
\begin{theorem}
  \label{thm:main}
  If $d \geq 3$, then almost surely, for every nonempty open $U \subset S^{d-1}$, $\norm{Z_\infty}_{L^p(U)} = \infty$ for all $p \geq \frac{d-1}{d-2}$.
  If $d = 2$, the same holds for $p = \infty$.
\end{theorem}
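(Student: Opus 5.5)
We would prove Theorem~\ref{thm:main} by exhibiting, inside any fixed open set, a single particle whose contribution to $Z_\infty$ already has infinite $L^p$ norm. The mechanism is that the contribution of one particle at position $x$ and time $s$ behaves like $(2s-\theta\cdot x)e^{-(2s-\theta\cdot x)}$ times an independent copy of the limit. As a function of $\theta$, this exponential factor is sharply peaked around the direction $x/|x|$ once $|x|$ is large relative to $s$.

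\textbf{Reduction.} It suffices to fix a countable basis of balls $U$ and show that for each one, $\norm{Z_\infty}_{L^p(U)}=\infty$ almost surely. By the branching property at time $s$,
\[
Z_\infty(\theta)=\sum_{p\in\m{P}_s}\e^{-\Delta_p(\theta)}\bigl[\Delta_p(\theta) Y^{(p)}(\theta)+Z^{(p)}_\infty(\theta)\bigr],\qquad \Delta_p(\theta)=2s-\theta\cdot X_s(p).
\]
Here $Y^{(p)}$ and $Z^{(p)}_\infty$ are the additive and derivative limits of independent copies. The critical additive martingale limit vanishes, so the formula reduces to $Z_\infty(\theta)=\sum_p \e^{-\Delta_p(\theta)}Z^{(p)}_\infty(\theta)$. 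All terms are nonnegative on the full-measure set of directions where they converge, so we may lower-bound $Z_\infty$ by any single term.

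\textbf{Key computation.} Suppose some particle sits at $x=r\theta_0$ with $\theta_0\in U$ and $r$ slightly larger than $2s$, say $r=2s+a$. Then
\[
\Delta(\theta)=2s-r\cos\angle(\theta,\theta_0)\approx -a+\tfrac{r}{2}|\theta-\theta_0|^2.
\]
So $\e^{-\Delta}\gtrsim \e^{a}$ on a cap of angular radius about $r^{-1/2}$. Multiplying by $Z^{(p)}_\infty$, which is positive and of order one on a positive fraction of that cap (a Fubini/Paley--Zygmund argument using $\E Z_\infty$ restricted appropriately, or simply a positive lower bound in probability), the contribution to $\int_U Z_\infty^p$ is at least about $\e^{pa}r^{-(d-1)/2}$.

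We then need, for infinitely many scales, particles achieving a large excess $a$ relative to this cap. The natural comparison is the Gaussian cost. Over a long time, in the regime $a\sim$ a constant times $\log s$, such near-frontier particles exist in a given direction with a polynomial probability. The threshold $p\ge (d-1)/(d-2)$ should emerge by balancing $\e^{pa}$ against the cap size $r^{-(d-1)/2}$ and the probability, summed over directions. Equivalently, in the second picture, one looks at a particle at time $s$ placed at distance $\ell$ ahead, and integrates the kernel $\e^{-\Delta}$ over a cap to get a power of $\ell$. I expect that $d=2$ gives only $p=\infty$ because the exponents degenerate there. For the $L^\infty$ claim in $d=2$, we simply need the supremum of $\e^{a}$ over caps to be unbounded, which follows from $\sup_{s}\max_p(\theta_0\cdot X_s(p)-2s+\tfrac32\log s)$ type excursions. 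Indeed, $\limsup_s \bigl(M_s-2s+\tfrac12\log s\bigr)$ is finite but $M_s-2s+\tfrac32\log s$ is unbounded above infinitely often (Hu--Shi), so the excess $a$ can be arbitrarily large.

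\textbf{Main obstacle and zero--one step.} The hardest part is showing that these large excursions occur within every $U$ almost surely and with the correct exponent. I would first try a Borel--Cantelli argument along a geometric sequence of times $s_k$. At each time we use independent subtrees started from many particles present at an earlier time, each of which independently has a chance of producing an excursion in direction $U$. This handles the dependence, and the product of failure probabilities tends to zero. To pin down the exact critical $p$, we need a sharp first- and second-moment computation for the number of particles at time $s$ whose position lies in the cone over $U$ at a radius $2s+a$. This is a $d$-dimensional Gaussian many-to-one computation, with the radial part governed by a Bessel-type barrier. I expect the second-moment (barrier) estimate ensuring that such particles exist with probability bounded below to be the technical crux.
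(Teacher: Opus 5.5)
\textbf{Verdict: genuine gap.} The reduction is correct, but the excursion result it depends on is missing, and your $d=2$ shortcut is wrong.

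\textbf{What works, and how it differs from the paper.} You apply the exact identity $Z_\infty(\theta)=\sum_q e^{-\Delta_q(\theta)}Z^{(q)}_\infty(\theta)$ at the stopping time $\hat t$ of the first excursion after $T$, using the strong branching property. On the cap of radius $\hat t^{-1/2}$ about $\hat\theta$ you have $e^{-\Delta_{\hat p}}\ge e^{a-2}$. By Fubini and Markov, the independent copy $Z^{(\hat p)}_\infty$ is at least $c$ on half that cap with probability close to $1$. Use this rather than a first-moment argument, since the mean of $Z_\infty(\theta)$ is infinite. This route bypasses Lemma~\ref{lem:total}, Proposition~\ref{prop:local} and Theorem~\ref{thm:convergence}. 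The paper needs those because it only controls the total mass of $\hat Y_t$ and must then prove that mass stays localized.

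\textbf{The missing input.} Your bound reads $\|Z_\infty\|_{L^p(U)}^p\gtrsim e^{pa}s^{-(d-1)/2}$ with $a=R_s(p)-2s$. So the endpoint $p=\frac{d-1}{d-2}$, and $p=\infty$ when $d=2$, require the following: almost surely, through every open sector and at arbitrarily late times, some particle has $R_s(p)-2s\ge\frac{d-2}{2}\log s+\beta_s$ with $\beta_s\to\infty$. This is exactly the divergent half of Theorem~\ref{thm:LIL} with $\beta_s=\log\log s$, at the edge of Hu's test $\int^\infty dr/(re^{\beta_r})=\infty$. It is the main work of the paper. The paper gets it from three ingredients, which together force the hitting probability to equal $1$ (Proposition~\ref{prop:hitting}):
\begin{itemize}
\item the Fisher--KPP representation of the hitting probability (Lemma~\ref{lem:ancientbis});
\item an anisotropic subsolution carrying the linear prefactor $(b_{|t|}-r)e^{-(b_{|t|}-r)}$ (Proposition~\ref{prop:der-mart-multi-bis});
\item the derivative-martingale convergence of~\cite{BerKimLubMalZei24}.
\end{itemize}

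Your sketch does not reach this scale:
\begin{itemize}
\item Excursions with $a=\alpha\log s$, $\alpha<\frac{d-2}{2}$, give only $p>\frac{d-1}{d-2}$, and nothing when $d=2$.
\item Bounded $\beta_s$ still fails at the endpoint.
\item ``Balancing $e^{pa}$ against the probability'' has no meaning for an almost-sure statement. The excursion must actually occur, and its size alone fixes the bound.
\end{itemize}
The second-moment barrier estimate you defer as ``the crux'' would have to reproduce the sharp criterion $\int^\infty dr/(re^{\beta_r})=\infty$ within a fixed angular sector. That is the whole difficulty, not a detail.

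\textbf{The $d=2$ shortcut.} The excess entering $e^{-\Delta}$ is measured from $2s$, not from $2s-\frac32\log s$. In any fixed direction $\theta_0$, almost surely $\limsup_s (M_s(\theta_0)-2s)/\log s=-\frac12$. Hence $\max_p e^{-\Delta_p(\theta_0)}=e^{M_s(\theta_0)-2s}\le s^{-1/2+o(1)}$ along the whole trajectory. The Hu--Shi excursions of $M_s-2s+\frac32\log s$ are therefore far too small to make $e^{a}$ large. Separately, $\limsup_s(M_s-2s+\frac12\log s)$ is $+\infty$, not finite, by Hu's test with constant $\beta$. To get $e^{a}\to\infty$ in $d=2$ you need radial excursions $R_s(p)-2s\to\infty$ through every sector. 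That requires three gains together: Mallein's angular gain of $\frac{d-1}{2}\log s$, the exceptional-time gain of $\log s$, and a divergent $\beta_s$. This is precisely the critical case of Theorem~\ref{thm:LIL}, which your proposal does not supply.
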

\noindent
The sharpness of the threshold $p = \frac{d - 1}{d - 2}$ remains an interesting open question.

Taking $p = \infty$, Theorem~\ref{thm:main} implies that in multiple dimensions, $Z_\infty$ is nowhere locally bounded.
Thus in any open set of directions, BBM advances arbitrarily farther in one direction relative to another:
\begin{corollary}
  \label{cor:main}
  Let $d \geq 2$.
  Almost surely, for each $L,\eps > 0$ and nonempty open $U \subset S^{d-1}$, there exist $\theta_1,\theta_2 \in U$ such that
  the long-term fraction of time during which $M_t(\theta_1) \geq M_t(\theta_2) + L$ is at least $1 - \eps$.
  That is,
  \begin{equation}
    \label{eq:liminf}
    \liminf_{T \to \infty} \frac{1}{T} \int_0^T \tbf{1}\bigl(M_t(\theta_1) \geq M_t(\theta_2) + L\bigr) \d t \geq 1 - \eps.
  \end{equation}
\end{corollary}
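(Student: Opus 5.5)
We derive Corollary~\ref{cor:main} from Theorem~\ref{thm:main} (with $p = \infty$) together with the ergodic theorem~\eqref{eq:ergodic}, upgraded to a statement about time fractions. Fix $L,\eps>0$ and a nonempty open $U$. We work on the almost-sure event where Theorem~\ref{thm:main} holds and where~\eqref{eq:ergodic}, together with $Z_t(\theta) \to Z_\infty(\theta)>0$, holds on a random full-measure set $G \subset S^{d-1}$. Since $Z_\infty$ is an a.e.-defined $L^1$ function, its local essential supremum on $U$ is infinite. In particular, for every $K$ the set $\{\theta \in U\cap G : Z_\infty(\theta) > K\}$ has positive measure. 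We also fix any $\theta_2 \in U \cap G$, so that $Z_\infty(\theta_2)$ is finite and positive. Note that $\theta_1,\theta_2$ are chosen after observing the BBM. This is why we need statements valid simultaneously for all $\theta$ in a full-measure set, which Fubini supplies, rather than statements for a fixed $\theta$.

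The key step is to replace the time average of $M_t$ in~\eqref{eq:ergodic} by a time fraction. For this we use the stronger 1D ergodic theorem of Arguin--Bovier--Kistler~\cite{ArgBovKis13b}. In 1D it says that, almost surely, the empirical time distribution of $M_t - m_t$ (with $m_t = 2t-\tfrac32\log t$) converges weakly to the random Gumbel law $\mu_{Z} := \mathrm{Law}(G' + \log Z_\infty + c')$, where $G'$ is a standard Gumbel variable. By equality in law and Fubini, we may enlarge our event so that this holds for every $\theta \in G$.

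The time fraction involves two directions simultaneously, so we also need joint behaviour. We bound
\begin{equation*}
  \tbf{1}\bigl(M_t(\theta_1) < M_t(\theta_2)+L\bigr) \le \tbf{1}\bigl(M_t(\theta_1) - m_t < A\bigr) + \tbf{1}\bigl(M_t(\theta_2) - m_t > A - L\bigr)
\end{equation*}
for a level $A$ to be chosen. By the marginal ergodic theorems, the time averages of the two indicators on the right converge to $\mu_{Z(\theta_1)}((-\infty,A))$ and $\mu_{Z(\theta_2)}((A-L,\infty))$ respectively. Weak convergence suffices here because the Gumbel law has no atoms. We choose $A$ large, depending on $Z_\infty(\theta_2)$, so that the second limit is below $\eps/2$. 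We then choose $\theta_1$ with $Z_\infty(\theta_1) > K$, where $K$ is large enough that $\mu_{Z(\theta_1)}((-\infty,A)) < \eps/2$; this is possible since the law shifts by $\log Z_\infty(\theta_1) \to \infty$. Taking $\limsup$ of the time average of the left side gives at most $\eps$, which is exactly~\eqref{eq:liminf}.

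The main obstacle is justifying the almost-sure, all-$\theta \in G$ version of the distributional ergodic theorem. The paper as quoted only states the integrated form~\eqref{eq:ergodic}. If only~\eqref{eq:ergodic} is available, I would instead try a Markov-type argument. Write $D_t := M_t(\theta_1)-M_t(\theta_2)$. Its time average tends to $\log Z_\infty(\theta_1) - \log Z_\infty(\theta_2)$, which is huge. Combined with an upper bound on the long-run time average of $(D_t - \bar D)^-$, this would yield the fraction bound. Obtaining such a bound requires controlling fluctuations, for example a uniform-in-time $L^1$ bound on $(M_t - m_t - \log Z_t)^{\pm}$ along time averages. For this reason I would rely on the ABK distributional version applied directly, and I would verify that it applies in the form stated in~\cite{ArgBovKis13b}.
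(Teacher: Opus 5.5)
Your proposal is correct and follows the paper's proof. The paper likewise invokes the Arguin--Bovier--Kistler occupation-time ergodic theorem, upgraded to hold for all levels $x$ at once via a countable dense set, monotonicity and continuity, and then to a random full-measure set of directions via Fubini; it uses Theorem~\ref{thm:main} with $p=\infty$ to find $\theta_1,\theta_2\in U$ in that set with $Z_\infty(\theta_1)/Z_\infty(\theta_2)$ large, and closes with the same union bound. The only difference is cosmetic: you fix $\theta_2$ and a common level $A$ before choosing $\theta_1$, whereas the paper fixes $K(\eps)$ and requires $Z_\infty(\theta_1)\geq \e^{L+2K}Z_\infty(\theta_2)$ with levels $\log Z_\infty(\theta_i)\mp K$; your fallback Markov-type argument is not needed.
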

\noindent
In this sense, the BBM frontier is quite irregular.

Our approach builds on the following intuition: if a particle makes an unusual radial excursion, its descendants impart a permanent boost to $Z_t$ in nearby directions.
To prove Theorem~\ref{thm:main}, we show that the BBM makes suitably large excursions in a dense set of directions.
We take inspiration from the work of Maillard and Pain~\cite{MaiPai19}, who used such excursions in one dimension to characterize the $\m{O}(t^{-1/2})$ fluctuations of $Z_t$.
In higher dimensions, there are more directions and thus more excursions, so this effect contributes at leading rather than vanishing order.

We are interested in particles that travel well beyond the typical frontier.
By~\eqref{eq:Bramson}, in any fixed direction, the BBM frontier is typically at distance $2t - \frac{3}{2} \log t$ from the origin.
However, Mallein~\cite{Mallein15} showed that there are random, time-varying directions in which particles are considerably farther, at distance $2t + \frac{d-4}{2} \log t$.
If we wait for an exceptional \emph{time}, particles will travel farther still.
Indeed in one dimension, Hu and Shi~\cite{HuShi09} and A\"id\'ekon and Shi~\cite{AidShi14} showed that particles occasionally travel $\log t$ beyond the typical frontier.

This suggests that in higher dimensions, we should occasionally find particles at the radius
\begin{equation}
  \label{eq:numerology}
  2t + \frac{d-4}{2} \log t + \log t = 2t + \frac{d-2}{2} \log t.
\end{equation}
As a start, it is relatively straightforward to show that particles occasionally reach $2t + \al \log t$ for any $\al < \frac{d-2}{2}$.
Their contributions to $Z_\infty$ are sufficiently large to prove Theorem~\ref{thm:main} when $d \geq 3$ and $p > \frac{d-1}{d-2}$.
However, the critical dimension $2$ (or the threshold $p = \frac{d-1}{d-2}$) requires much finer control.
We show that particles do reach the radius in \eqref{eq:numerology}, and in fact go slightly farther.
This rests on a Kolmogorov-type integral test for the maximal radial displacement of multidimensional branching Brownian motion.
To state it, we introduce the notation $\big(R_t(p),\Theta_t(p)\big)$ for the radial and angular coordinates of the position $X_t(p)$ of particle $p$ at time $t$.
\begin{theorem}
  \label{thm:LIL}
  Let $\beta \in \m{C}^2\big([0, \infty); \R\big)$ satisfy $t \dot{\beta}_t \to 0$ as $t \to \infty$.
  If 
  \begin{equation}
    \label{eq:integral-condition}
    \int_1^\infty \frac{\dn r}{r \exp \beta_r} = \infty,
  \end{equation}
  then almost surely, for every nonempty open $U \subset S^{d-1}$, there exist sequences $t_n \nearrow \infty$ and $p_n \in \m{P}_{t_n}$ such that for all $n \in \N$,
  \begin{equation}
    \label{eq:large-excursion}
    R_{t_n}(p_n) = 2t_n + \frac{d-2}{2} \log t_n + \beta_{t_n} \And \Theta_{t_n}(p_n) \in U.
  \end{equation}
  If \eqref{eq:integral-condition} does not hold, then almost surely there exists $\tau > 0$ such that for all $t > \tau,$
  \begin{equation*}
    \max_{p \in \m{P}_t} R_t(p) < 2t + \frac{d-2}{2} \log t + \beta_t.
  \end{equation*}
\end{theorem}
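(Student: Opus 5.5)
We follow the first- and second-moment strategy behind Kolmogorov-type integral tests for the one-dimensional BBM maximum (Hu--Shi, Aïdékon--Shi), run along a geometric sequence of times, and adapt it to the radial process.

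\textbf{Approximate radial maximum.} The basic input is a sharp estimate for the probability that some particle is near radius
\[
m_t \coloneqq 2t + \tfrac{d-2}{2}\log t + \beta_t
\]
at time $t$. By many-to-one, the expected number of particles at time $t$ within $\m{O}(1)$ of radius $m_t$ is
\[
\e^{t}\,\P(|B_t| \approx m_t)\asymp \e^{t}\, m_t^{d-1} t^{-d/2}\e^{-m_t^2/4t} \asymp t^{d-1} t^{-d/2} t^{-(d-2)/2}\e^{-\beta_t} = \e^{-\beta_t}.
\]
This is the wrong quantity: as in one dimension, one must impose a barrier (the ancestral path stays below a curved boundary) to turn first moments into probabilities. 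With the barrier the one-dimensional count loses a factor $t^{-1}$, which corresponds to the shift $\frac{d-4}{2}\log t$ of Mallein. The ``$+\log t$'' in \eqref{eq:numerology} is exactly the gain from looking at exceptional times rather than fixed ones.

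\textbf{Time blocks and lower bound.} We therefore work on blocks $[t_k, t_{k+1}]$ with $t_k = \e^{k}$ or $2^k$, and look for the event $E_k$ that some particle at some time in the block lies at radius $\ge m_t$ with angle in a fixed small cap $U$. Decomposing the path radially, the projection onto the eventual direction is a 1D Brownian motion with drift and the transverse part costs a Gaussian factor in angle. The steps are:
\begin{enumerate}
\item Using a ballot/Bessel-bridge estimate with the barrier $2s + \frac{d-2}{2}\log$-type correction, show that the expected number of ``good'' excursions in block $k$ reaching $m_t$ with angle in $U$ is comparable to $\int_{t_k}^{t_{k+1}} \frac{\d r}{r\,\e^{\beta_r}}$. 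The condition $t\dot\beta_t\to0$ makes $\beta$ essentially constant on a block, so this integral is $\asymp \e^{-\beta_{t_k}}$.
\item Bound the second moment of the number of good excursions by a constant times the first moment (plus its square), using that good particles must have branched early or late along barrier-respecting paths.
\item By Paley--Zygmund, $\P(E_k)\gtrsim \min(1,\e^{-\beta_{t_k}})$, so $\sum_k \P(E_k)=\infty$ is equivalent to \eqref{eq:integral-condition}.
\item Obtain almost-sure occurrence through approximate independence. Condition on $\m{F}_{t_k/2}$, or on a time $s_k\ll t_k$, and apply the estimate to the many independent subtrees rooted at time $s_k$. This upgrades to a conditional Borel--Cantelli (Lévy's extension), since $\sum_k\P(E_k\mid\m{F}_{s_k})=\infty$ a.s.
\end{enumerate}
Exact equality in \eqref{eq:large-excursion} then follows from path continuity: reaching radius $\ge m_t$ and starting below forces a crossing at some time. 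For simultaneity over all $U$, take a countable base of caps and intersect the full-probability events.

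\textbf{Upper bound.} This is a first-moment Borel--Cantelli argument on blocks, but a naive count over the whole block overshoots by polynomial factors. We instead first show that a.s. eventually no particle ever crosses a slightly higher barrier. This is the 1D-type result that the projected frontier stays below $2t + (\frac{d-2}{2}+\delta)\log t$, obtained by a union over a $t^{-1}$-net of directions, each handled by the projected 1D BBM. On that event, the barrier-restricted first moment for hitting $m_t$ in block $k$ is $\lesssim \e^{-\beta_{t_k}}$, which is summable when \eqref{eq:integral-condition} fails.

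\textbf{Main obstacle.} The delicate step is the sharp barrier first- and second-moment estimates for the $d$-dimensional radial process with the $\frac{d-2}{2}\log t$ correction. Here the Bessel drift $\frac{d-1}{2r}$ interacts with the logarithmic curvature, and getting matching constants up to $\m{O}(1)$ factors is essential for an integral test. I would first try to reduce to one dimension: condition on the final direction $\theta$ and compare $R$ with $\theta\cdot X$, losing only the transverse Gaussian factor $t^{(d-1)/2}$. That factor combines with the 1D Hu--Shi estimates to give the stated exponent.
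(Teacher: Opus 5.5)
The genuine gap is your step~4, the upgrade to an almost-sure statement. The unconditional Paley--Zygmund bound $\P(E_k)\gtrsim\min(1,\e^{-\beta_{t_k}})$ tells you nothing about $\P(E_k\mid\m{F}_{s_k})$ for a typical configuration. ``Many independent subtrees'' does not yield a law of large numbers here. The subtrees rooted at time $s$ see the fixed threshold from very different positions, and $\sum_p \P_{s,X_s(p)}(E_k)$ is dominated by particles at lag $y=2s-R_s(p)$ of order $\sqrt{s}$. Heuristically this sum is comparable to $\e^{-\beta_{t_k}}Y_s(U)$, a derivative-martingale functional whose limit $Z_\infty(U)$ is a non-degenerate random variable. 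So there is no deterministic lower bound to feed into L\'evy's Borel--Cantelli.

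What you actually need is two ingredients.
\begin{enumerate}
\item[(i)] A per-particle estimate $\P_{s,x}(E_k)\gtrsim \e^{-\beta_{t_k}}\,y\,\e^{-y}\,s^{-(d-1)/2}$ for $x$ with angle in a smaller cap $U'\Subset U$ and lag $y$ in a window such as $[s^{1/6},s^{2/3}]$. The linear (ballot) factor $y$ is indispensable: without it the sum is an additive-martingale functional, smaller by a factor $s^{-1/2}$, which fails in the critical regime (cf.\ Remark~\ref{rem:3D}).
\item[(ii)] An almost-sure positive lower bound on $Y_{s_k}(U')$. This rests on the convergence $Y^{\textnormal{win}}_s\to Z_\infty$ from \cite{BerKimLubMalZei24} and the positivity of $Z_\infty$ on open sets from \cite{StaBerMal21}.
\end{enumerate}
Neither ingredient appears in your plan, yet they are the heart of the paper's proof. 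The infinite-horizon version of (i) is Proposition~\ref{prop:der-mart-multi-bis}, obtained from explicit Fisher--KPP subsolutions; the divergence of $\int \frac{\dn r}{r\e^{\beta_r}}$ enters only through the Duhamel computation in Lemma~\ref{lem-26apr2002}. Ingredient (ii) is what drives Proposition~\ref{prop:hitting}. With both in hand, blocks and Borel--Cantelli become unnecessary. Summing the per-particle bound over all particles at one late time gives $h_T(0,0)\geq 1-\E\exp[-c\,t^{1/2}\e^{-\beta_t}Y^{\textnormal{win}}_t(U')]\to 1$, so $\P[E_T(U)]=1$ for every $T$.

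There are also three smaller problems.
\begin{enumerate}
\item[(a)] L\'evy's extension requires $E_k$ to be measurable with respect to the $\sigma$-algebra you condition on at the next step. This fails for $s_{k+1}=t_{k+1}/2$ or $s_{k+1}\ll t_{k+1}$; you would need $s_{k+1}\geq t_{k+1}$ or sparser blocks.
\item[(b)] In the upper bound, a $t^{-1}$-net has $\asymp t^{d-1}$ directions. Each projected 1D BBM exceeds $2s+(\frac{d-2}{2}+\delta)\log s$ during a block with probability of order $t^{-(d-1)/2-\delta}$ (up to logarithms), so the union bound diverges. A $t^{-1/2}$-net is what works, since $|x|(1-\cos t^{-1/2})=\m{O}(1)$ for $|x|\asymp t$.
\item[(c)] The sharp radial ballot estimates of steps 1--2, including the second moment with the angular constraint, carry essentially all the difficulty of your route and are only sketched. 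The paper avoids them entirely. It uses subsolutions for the lower bound. For the upper bound it uses an angular superposition of 1D linear supersolutions, plus Mallein's result to exclude particles that stay outside forever.
\end{enumerate}
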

\noindent
The criterion \eqref{eq:integral-condition} was identified by Hu~\cite{Hu15} for 1D branching random walks, and we extend it here to BBM in $\R^d$.
The choice $\beta_t = \log \log t$ satisfies \eqref{eq:integral-condition} and allows us to prove the borderline cases of Theorem~\ref{thm:main}.
We once again emphasize that the fine result in Theorem~\ref{thm:LIL} is only necessary to treat $d = 2$ or $p = \frac{d-1}{d-2}$.
When~$d \geq 3$ and $p > \frac{d-1}{d-2}$, much shorter arguments suffice, as discussed after \eqref{eq:informal} and in Remark~\ref{rem:3D} below.
\medskip

Previous works on exceptional excursions~\cite{HuShi09,AidShi14,Hu15} have largely relied on spine decompositions~\cite{Lyons94} and the second moment method.
We take a different approach, by expressing probabilities of interest in terms of solutions to the Fisher--KPP equation, a deterministic parabolic PDE.
This strategy is well known in one dimension, and led to Bramson's result in \eqref{eq:Bramson} \cite{Bramson78}.
However, the connection between branching processes and PDEs has received relatively little attention in multiple dimensions.
To our knowledge, this is the first work that uses intrinsically multi-dimensional Fisher--KPP solutions to study branching Brownian motion.
Our proof of Theorem~\ref{thm:LIL} combines such solutions with results on the derivative martingale from~\cite{BerKimLubMalZei24}.
We thus take a hybrid approach, drawing on both PDE constructions and purely probabilistic estimates to produce large excursions.

The duality between BBM and the Fisher--KPP equation involves a time reversal, so the long-time behavior of BBM is tied to ancient solutions of Fisher--KPP.
In order to treat arbitrary open sets of directions as in Theorem~\ref{thm:main}, these solutions are necessarily anisotropic and do not have a direct one-dimensional analog.
For example, our proof of the localization of mass in $Z_t$ (Proposition~\ref{prop:local} below) rests on a detailed analysis of the distinct behavior of a Fisher--KPP solution in one direction relative to others.
This line of inquiry bears some resemblance to PDE literature such as constructions of conical traveling waves in~\cite{HamMon00,HamMonRoq05} and the partial classification of entire solutions to Fisher--KPP in~\cite{HamNad01}, but the connection is indirect.

\subsection*{Organization}

In the next section, we outline our approach.
In Section~\ref{sec:excursions}, we show that BBM makes large excursions assuming a crucial PDE estimate.
We prove this estimate over the next two sections, which develop 1D (Section~\ref{sec:1D-excursions}) and multi-D (Section~\ref{sec:excursions-proof}) estimates for the relevant Fisher--KPP solution.
In Section~\ref{sec:necessary}, we prove a matching upper bound, showing that BBM does not travel much farther.
This completes the proof of Theorem~\ref{thm:LIL}.

Turning to Theorem~\ref{thm:main}, we show that large excursions produce significant mass in $Z_\infty$ (Section~\ref{sec:total}) that is sharply localized (Section~\ref{sec:local}).
This implies Theorem~\ref{thm:main}.
In an appendix, we observe that results of~\cite{BerKimLubMalZei24} are robust to modest variations.

\subsection*{Acknowledgments}

CG was supported by the National Science Foundation (NSF) through DMS-2516786.
LR was supported by NSF grants DMS-2205497 and DMS-2510166. 
LM was supported in part by the ISF grant No. 1985/22.

\section{Proof outline}
\label{sec:outline}

In this section, we explain the structure of our argument and prove Theorem~\ref{thm:main} and Corollary~\ref{cor:main}, assuming certain auxiliary results that we defer to later sections.

\subsection{Excursions and the derivative martingale}

As noted above, our proof rests on the observation that large radial excursions produce large values in $Z_\infty$ with high probability.
We now offer an informal explanation of why this is the case.

Define a radial threshold
\begin{equation}
  \label{eq:threshold}
  b_t \coloneqq 2t + \frac{d-2}{2} \log t + \beta_t
\end{equation}
fine-tuned by a slowly varying function $t \mapsto \beta_t$ satisfying the hypotheses of Theorem~\ref{thm:LIL} and \eqref{eq:integral-condition}.
We will ultimately take $\beta_t = \log \log t$, but for the moment we allow it to be an arbitrary element of this class.

Given a nonempty open set of directions $U \subset S^{d-1}$, Theorem~\ref{thm:LIL} ensures that particles cross the radial threshold $b_t$ infinitely often through the sector $U$.
In particular, if we fix a large time $T \gg 1$, then almost surely there exists a random earliest stopping time $\h t \geq T$ and a particle $\h p \in \m{P}_{\h t}$ making a large excursion in $U$:
\begin{equation*}
  R_{\h t}(\h p) = b_{\h t} = 2\h t + \frac{d-2}{2} \log \h t + \beta_{\h t} \And \Theta_{\h t}(\h p) \in U.
\end{equation*}
Both $\h t$ and $\h p$ depend on $U$ and $T$, but we suppress this dependence for legibility.

Now recall the derivative martingale $Z_t(\theta)$ defined in \eqref{eq:deriv}.
If we omit the linear prefactor in \eqref{eq:deriv}, we obtain the critical ``additive'' martingale $W_t$, which vanishes in the long-time limit.
Under the so-called Seneta--Heyde scaling, $\sqrt{t} W_t \to C Z_\infty$ in probability as $t \to \infty$~\cite{HuShi09,AidShi14}.
This suggests that the derivative martingale $Z_t$ in \eqref{eq:deriv} is dominated by particles roughly $\sqrt{t}$ behind the BBM frontier.
This provides considerable leeway in analyzing $Z_t$: we can drop particles outside this window (as in $Y^{\text{win}}$ in \eqref{eq:Y-window}) and modify the prefactor by $o(\sqrt{t})$ (as in $Y$ and $\h Y$ below) without changing the long-time limit.
We make frequent use of this flexibility, routinely modifying $Z_t$ to produce convenient properties while preserving its essential nature.
Such modifications are not true martingales, but this plays no role in our analysis.

As a first example, we work with a variant of $Z_t$ similar to one introduced in~\cite{BerKimLubMalZei24}.
Define the random measure on $S^{d-1}$
\begin{equation}
  \label{eq:shaved}
  Y_t(\dn\theta)\coloneqq \pi^{\frac{d-1}{4}} \sum_{p \in \m{P}_t} ([b_t - R_t(p)]_+ + 1) \e^{-[2t + \frac{d-1}{2} \log t - R_t(p)]} \delta_{\Theta_t(p)}(\dn\theta).
\end{equation}
The intuition is as follows: if we integrate $Z_t(\theta)$ from \eqref{eq:deriv} against a smooth function of $\theta$, we can use Laplace's method to identify the asymptotic contribution of a given particle to the integral.
This produces a factor of $(\sqrt{\pi}/t)^{(d-1)/2}$, which appears in the point measure \eqref{eq:shaved}.
Thus \eqref{eq:deriv} and \eqref{eq:shaved} behave similarly when integrated against test functions.
In addition, we have modified the linear prefactor in a manner that preserves the leading contribution of particles roughly $\sqrt{t}$ behind the frontier.
As discussed above, this change has a negligible effect at long times:
\begin{theoremalph}[\cite{BerKimLubMalZei24}]
  \label{thm:convergence}
  As $t \to \infty$, the measure $Y_t$  converges weakly in probability to the limit $Z_\infty$ of the projected derivative martingale in \eqref{eq:deriv}.
\end{theoremalph}
\noindent
This is a minor variation on Theorem~1.4 in~\cite{BerKimLubMalZei24}; we include a proof in Appendix~\ref{sec:convergence}.

We wish to show that $Y_t$ is consistently large in certain directions.
Because its summands are nonnegative, it suffices to consider only those $p$ that descend from the particle~$\h p$ defined above.
We let $\h{\m{P}}_t$ denote this subpopulation and introduce another random measure
\begin{equation}
  \label{eq:restricted}
  \h{Y}_t(\dn\theta) \coloneqq \pi^{\frac{d-1}{4}} \sum_{p \in \h{\m{P}}_t} ([\h{b}_t - R_t(p)]_+ + 1) \e^{-[\h{b}_t - R_t(p)]} \delta_{\Theta_t(p)}(\dn\theta) \quad \text{for } t \geq \h t,
\end{equation}
where
\begin{equation}
  \label{eq:b-hat}
  \h{b}_t \coloneqq 2t + \frac{d-1}{2} \log t - \frac{1}{2} \log \h t + \beta_{\h t}.
\end{equation}
Note that the curves $b$ and $\h b$ agree at time $\h t$.

The exponent in \eqref{eq:restricted} differs from that in \eqref{eq:shaved} by $- \frac{1}{2} \log \h t + \beta_{\h t}$, which is chosen so that $\h Y_t$ has mass of order $1$ at its initial time $\h t$.
The difference in the linear prefactor in \eqref{eq:restricted} plays a negligible role (see Appendix~\ref{sec:convergence}), so
\begin{equation}
  \label{eq:positive}
  Y_t \geq \h t^{-1/2} \e^{\beta_{\h t}} \h Y_t + \smallO_{t \to \infty}(1).
\end{equation}
At time $\h t$, the set $\h{\m{P}}_{\hat t}$ consists of the single particle $\h p$, and $R_{\hat t}(\h p)=b_{\hat{t}}= \h b_{\h t}$.
Hence $\h Y_{\h t}(S^{d-1}) = \pi^{(d-1)/4}$.
By design, both $Y$ and $\h Y$ are approximately martingales.
We thus expect the total mass of $\h Y$ to remain bounded away from zero.
To express this carefully, let $\m{G}_{U,T}$ denote the $\sigma$-algebra containing all information about particles until they first cross the threshold $b_t$ through $U$ after time $T$; for a rigorous definition, see~\cite{Chauvin91}.
Conditioning on $\m{G}_{U,T}$ allows us to discuss probabilities at times $t \geq \h t$.
\begin{lemma}
  \label{lem:total}
  For all $\eps > 0$, there exists $m > 0$ such that for all $t \geq \h t$,
  \begin{equation*}
    \P\big[\h Y_t(S^{d-1}) \geq m \mid \m{G}_{U,T}\big] \geq 1 - \eps.
  \end{equation*}
\end{lemma}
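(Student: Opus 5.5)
The plan is to use the strong Markov property at the stopping time $\h t$ and reduce the statement to a uniform-in-time lower bound for a single branching Brownian motion started from $\h p$. Conditional on $\m{G}_{U,T}$, the descendants $\h{\m{P}}_t$ of $\h p$ form a BBM in $\R^d$ started at $X_{\h t}(\h p)$, with $|X_{\h t}(\h p)| = b_{\h t} = \h b_{\h t}$. Write $s = t - \h t$ and $f(x) \coloneqq ([x]_+ + 1)\e^{-x}$, so that
\[
\h Y_t(S^{d-1}) = \pi^{\frac{d-1}{4}} \sum_{p \in \h{\m{P}}_t} f\bigl(\h b_t - R_t(p)\bigr).
\]
The function $f$ is nonincreasing on $\R$, so lower bounds on $R_t(p)$ give lower bounds on the mass. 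Everything must be uniform in the large parameter $\h t \geq T$.

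A naive projection onto $\h\theta = \Theta_{\h t}(\h p)$ does not work. It replaces $\h b_t - R_t$ by the one-dimensional quantity $2s - y + \frac{d-1}{2}\log(t/\h t)$, which loses a factor $(\h t/t)^{(d-1)/2}$. This loss must be recovered from the radial (Bessel) drift. At radius $R \approx 2t$, a Brownian motion with generator $\Delta$ has radial drift $(d-1)/R \approx (d-1)/(2t)$. Integrated from $\h t$ to $t$, this gives exactly $\frac{d-1}{2}\log(t/\h t)$, which is why $\h b$ carries the coefficient $\frac{d-1}{2}$. So I will work directly with the radial process $R_t(p)$ along each lineage, which is a Bessel-type diffusion. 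I will compare it to the curve $\h b_t$ through the Girsanov change of measure that removes the drift $2 + \frac{d-1}{2t}$.

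Under this change of measure, the many-to-one formula gives
\[
\E\bigl[\h Y_t(S^{d-1}) \mid \m{G}_{U,T}\bigr] = \pi^{\frac{d-1}{4}}\, \E\bigl[f(W_s)\,\e^{W_s}\,\e^{\text{error}}\bigr]\, \e^{-W_0}, \qquad W_0 = 0.
\]
Here $W$ is the difference $\h b - R$ along a single lineage. Under the tilted measure $W$ is close to a standard Brownian motion, and the error term comes from $\dot\beta$, from the curvature of the Bessel drift, and from the mismatch between $(d-1)/(2R)$ and $(d-1)/(4t)$. Since $\e^{W}f(W) = W + 1$ on $\{W \geq 0\}$ and $=1$ otherwise, this expectation behaves like $\E[|W_s|] \sim \sqrt{s}$. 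The first moment is therefore of the wrong order, because rare lineages that wander far ahead dominate it.

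The standard remedy, as for the derivative martingale, is to truncate. Call a lineage \emph{good} if $W_r \geq -K$ for all $r \in [0, s]$, i.e.\ it never exceeds $\h b_r + K$. Restricted to good lineages, $x \mapsto (x + K)$ is (approximately) harmonic for the killed Brownian motion. Hence the truncated mass
\[
\h Y^{K}_t \coloneqq \pi^{\frac{d-1}{4}} \sum_{p \text{ good}} f\bigl(\h b_t - R_t(p)\bigr)
\]
has conditional mean bounded below by $c(K) > 0$ uniformly in $s$ and $\h t$. Indeed, the contribution of lineages ending at $W_s = w \geq 0$ is $(w+1)$ times the density of Brownian motion killed at $-K$, which integrates to order $K$. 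I then bound the conditional second moment $\E[(\h Y^K_t)^2]$ by a constant $C(K)$, again uniformly, via the many-to-two formula. Two lineages that split at time $r$ contribute a factor controlled by the killed Brownian motion at the branch point, as in the one-dimensional $L^2$ estimates for the truncated derivative martingale. Paley--Zygmund then gives
\[
\P\bigl[\h Y_t \geq c/2 \mid \m{G}_{U,T}\bigr] \geq \frac{c^2}{4C} > 0.
\]

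To upgrade this positive probability to $1 - \eps$, I will iterate over the early branching. Fix $N$ and run to a time $\h t + s_0$, where with conditional probability at least $1 - \eps/2$ there are at least $N$ descendants $q$ with $\h b - R(q) \in [0, 1]$. This is possible uniformly in $\h t$, because over a fixed time window the process is a fixed BBM up to negligible curvature corrections. Each such $q$ starts an independent copy of the above argument with $O(1)$ initial mass. By independence, the probability that all $N$ copies fail is at most $(1 - c^2/4C)^N < \eps/2$, and for $t \leq \h t + s_0$ the mass is bounded below directly on that finite window. The main obstacle I expect is the uniform second-moment bound. It requires that the error terms in the Girsanov density (the Bessel drift versus $\frac{d-1}{2}\log$ correction and $\dot\beta$) be summable along the whole path, uniformly for $t$ ranging over $[\h t, \infty)$. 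I would try to absorb them using $t\dot\beta_t \to 0$ and $|R - 2t| \ll t$ on good lineages, where $|R - 2t| = O(\log t + \sqrt{s})$.
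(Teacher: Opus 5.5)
\paragraph{Boosting step.} Your Paley--Zygmund step is plausible, with one caveat: the truncated second moment is bounded by the same killed-Brownian-motion computation as in one dimension, but you still owe the uniform control of the Girsanov errors. Note also that $\h b$ freezes $\beta$ at $\h t$, so there is no $\dot\beta$ error, and the relevant drift mismatch is $\frac{d-1}{R}$ versus $\frac{d-1}{2t}$.

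The genuine gap is the step that upgrades a positive probability to $1-\eps$. You claim that after a fixed window $s_0$, with conditional probability at least $1-\eps/2$, there are at least $N$ descendants $q$ with $\h b_{\h t+s_0}-R(q)\in[0,1]$. Such particles sit at the tip of a curve advancing at speed $2+\frac{d-1}{2r}$, and they are rare. Since $\e^{-(\h b-R(q))}\geq \e^{-1}$ for them, their number is at most $\e\sum_q \e^{-(\h b_{\h t+s_0}-R(q))}$. The conditional mean of this sum is of order one, uniformly in $s_0$ and $\h t$, by the same first-moment computation that underlies your normalization. For large $s_0$ the expected count is even of order $s_0^{-1/2}$, reflecting the $\frac32\log s_0$ lag of the front of a BBM grown from one particle. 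By Markov's inequality, the probability of having $N$ such descendants is $O(1/N)$. So the event you need fails for exactly the large $N$ that makes $(1-c^2/4C)^N<\eps/2$. Enlarging $K$ does not help either: the truncated second moment grows roughly like $\e^{K}$ (from lineages lingering just inside $\h b+K$), while the mean grows at most linearly in $K$.

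\paragraph{Repair and comparison.} The step can be repaired by giving up $O(1)$ initial mass.
\begin{itemize}
\item Choose $s_0$ with $N\e^{-s_0}\leq \eps/4$. Since the population is geometric with mean $\e^{s_0}$, there are at least $N$ descendants at time $\h t+s_0$ with probability at least $1-\eps/4$.
\item Choose $L=L(\eps)$ so that all of them satisfy $\h b-R(q)\leq L$ with probability at least $1-\eps/4$, uniformly in $\h t$.
\item Your $f$ is nonincreasing and satisfies $f(x+\ell)\geq \e^{-\ell}f(x)$ for $\ell\geq 0$. Moreover, the curve $\h b_r-(\h b_{r_0}-R(q))=R(q)+2(r-r_0)+\frac{d-1}{2}\log(r/r_0)$ has the same form as $\h b_r=b_{\h t}+2(r-\h t)+\frac{d-1}{2}\log(r/\h t)$. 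Hence each subtree's mass dominates $\e^{-L}$ times an independent copy of the original problem.
\item Applying Paley--Zygmund to each copy gives the lemma with $m$ of order $\e^{-L(\eps)}$, which the statement allows.
\end{itemize}
With this fix, and with the uniform moment bounds actually carried out, your route becomes a workable but heavier moment-method alternative.

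The paper avoids moments entirely. McKean's formula gives $\E[\exp(-\lambda \h Y_t(S^{d-1}))\mid \m{G}_{U,T}]=1-w(\h t,b_{\h t};t,\lambda)$ for a radial backward Fisher--KPP solution $w$. A shifted speed-$2$ traveling-wave subsolution $\phi(\h b_s+\xi_s-\frac12\log\lambda-r)$ forces $w\geq \frac14$ near $b_{\h t}$ at time $\h t+\frac18\log\lambda$. The hair-trigger effect then gives $w(\h t,b_{\h t})\to 1$ uniformly as $\lambda\to\infty$. The hair-trigger effect plays the role of your independent trials, and the high-probability bound comes out directly.
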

\noindent
In Section~\ref{sec:total}, we express this statement in terms of the Laplace transform of $\h Y_t$, which we analyze using a radially-symmetric solution of the Fisher--KPP equation.

Due to \eqref{eq:positive}, this bound implies that with high probability $Y_t(S^{d-1}) \gtrsim \h t^{-1/2} \e^{\beta_{\h t}}$, which does \emph{not} suffice to show that $\norm{Z_\infty}_{L^\infty} = \infty$.
Indeed, no argument based on total mass alone can work, for~\cite{StaBerMal21} showed that $\norm{Z_\infty}_{L^1} < \infty$ a.s.
Thus, to show that the $L^\infty$ norm diverges, we must argue that some of the mass of $Z_\infty$ remains localized in a small region.
This is plausible, because $\h Y_t$ is dominated by particles near the BBM frontier at time $t$.
In order to reach the frontier from the location of their parent $\h p$, they cannot afford to move much in the angular coordinate.
In fact, $\h{Y}_t$ remains localized to scale $\h t^{-1/2}$ around the angle $\h\theta \coloneqq \h \Theta_{\h t}(\h p) \in U$.
\begin{proposition}
  \label{prop:local}
  Let $\Gamma$ denote a ball in $S^{d-1}$ centered at $\h \theta$.
  For all $K > 0$, there exist $A(K),C(d,K) > 0$ such that if $\Gamma$ has radius $A \h t^{-1/2} \log^{1/2} \h t$, then for all $T \geq C(d,K)$ and~$t \geq \h t$,
  \begin{equation*}
    \P\big[\h Y_t(\Gamma^\cc) \geq \h t^{-K} \mid \m{G}_{U,T}\big] \leq  T^{-K}.
  \end{equation*}
\end{proposition}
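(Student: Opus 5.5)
Since $\h Y_t(\Gamma^\cc)$ is a sum of nonnegative terms, the natural route is a first-moment bound followed by Markov's inequality. By the strong Markov property at $\h t$, conditionally on $\m{G}_{U,T}$ the subpopulation $\h{\m{P}}_t$ is a BBM started from the single point $x_0 = b_{\h t}\h\theta$ at time $\h t$. Write $s = t - \h t$. The many-to-one formula gives
\begin{equation*}
  \E\big[\h Y_t(\Gamma^\cc) \mid \m{G}_{U,T}\big] = \pi^{\frac{d-1}{4}} \e^{s} \, \E\Big[([\h b_t - |x_0 + B_s|]_+ + 1)\, \e^{-[\h b_t - |x_0+B_s|]}\, \tbf{1}\big(\tfrac{x_0+B_s}{|x_0+B_s|} \notin \Gamma\big)\Big],
\end{equation*}
where $B$ is a Brownian motion with generator $\Delta$. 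However, the plain first moment is dominated by rare paths that run far ahead, so it is not bounded. I would therefore truncate first, using a barrier. By the second half of Theorem~\ref{thm:LIL} (or rather a direct union bound of the same type, applied for instance with $\beta_t = 2\log\log t$), with probability at least $1 - T^{-K}/2$ no descendant of $\h p$ ever exceeds the radius $2t + \frac{d-2}{2}\log t + C K\log \h t$ for $t \ge \h t$. To get polynomial rather than merely qualitative control, I would prove this via a many-to-one bound for the event of crossing a slightly curved barrier, started at distance $\gtrsim \log \h t$ below it. This uses the fact that $x_0$ lies $\frac12\log\h t$ inside the relevant line $\h b$ in the exponential weighting. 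On the complementary event I would then compute the truncated first moment.

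Next I would decompose $x_0 + B_s$ into a radial part along $\h\theta$ and a transverse part $B_s^\perp \in \R^{d-1}$. Leaving $\Gamma$ requires $|B^\perp_s| \gtrsim A \h t^{-1/2}\log^{1/2}\h t \cdot R$, where $R \approx 2t$ is the radius. Tilting the radial component by $\e^{\theta\cdot B_s}$ converts the factor $\e^{s}\e^{|x_0+B_s| - \h b_t}$ into a Brownian motion with drift $2$ killed at the barrier, with weight $([\h b_t - R]_++1)$. The exponent bookkeeping with $\h b$ is set up exactly so that the untilted, unrestricted mass is $O(1)$. The transverse part contributes the extra curvature term $|x_0+B_s| - (x_0\cdot\h\theta + B_s\cdot\h\theta) \approx |B^\perp_s|^2/(2|x_0|)$. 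This produces the Gaussian factor $\e^{-|B^\perp|^2/(2t)}$-type decay. Combined with the heat-kernel spread $\e^{-|B^\perp|^2/(4s)}$, the angular displacement $|B^\perp|/t$ is effectively Gaussian with variance of order $s/t^2 \lesssim 1/\h t$ when $s\lesssim \h t$, and of order $1/t \le 1/\h t$ when $s \gg \h t$, because the curvature term then dominates. So the probability of leaving an angular ball of radius $A\h t^{-1/2}\log^{1/2}\h t$ is at most $\h t^{-cA^2}$. The polynomial prefactors come from the $[\cdot]_+ + 1$ weight, the barrier, and the $\log$ terms, and are at most $\h t^{C}$. Choosing $A$ large in terms of $K$ makes the conditional expectation at most $\h t^{-2K}$.

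Markov's inequality then gives $\P[\h Y_t(\Gamma^\cc)\ge \h t^{-K}, \text{no barrier crossing}\mid\m{G}_{U,T}] \le \h t^{-K} \le T^{-K}/2$, since $\h t \ge T$. Adding the barrier-crossing probability finishes the proof once $T \ge C(d,K)$. The main obstacle is making the first-moment computation uniform in $s \in [0,\infty)$. This requires a precise estimate for a drifted Brownian motion killed at a logarithmically curved barrier, jointly with the transverse displacement and the curvature correction $|B^\perp|^2/(2R)$. Here the $\frac{d-1}{2}\log t$ in $\h b_t$ must exactly cancel the $(d-1)$-dimensional transverse Gaussian normalization. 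As a fallback, I would instead express the Laplace transform of $\h Y_t(\Gamma^\cc)$ through an anisotropic Fisher--KPP solution started from a datum supported in the cone over $\Gamma^\cc$, and bound it by a supersolution. This matches the paper's stated PDE approach to this proposition.
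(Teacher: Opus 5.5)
Your overall strategy (kill at a barrier, bound the crossing probability, then apply Markov to a truncated first moment) is sound. It is in fact what the paper's supersolution encodes. However, the barrier you chose fails.

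\textbf{The barrier.} The radius $2t+\frac{d-2}{2}\log t+CK\log\h t$ is the threshold of Theorem~\ref{thm:LIL} with the \emph{constant} correction $\beta\equiv CK\log\h t$, and a constant satisfies the divergent condition \eqref{eq:integral-condition}. The proof of the first part of Theorem~\ref{thm:LIL} gives $h_T\equiv 1$ from every space-time starting point, so the descendants of $\h p$ cross this barrier almost surely. Hence the probability of crossing during $[\h t,t]$ tends to $1$ as $t\to\infty$, and you get nothing uniform in $t\ge\h t$, which the proposition requires. The parenthetical $\beta_t=2\log\log t$ does not rescue the barrier you actually wrote. Also, $x_0$ lies exactly on $\h b_{\h t}=b_{\h t}$, not $\frac12\log\h t$ inside it. So the whole cushion must come from an offset $L$ on a barrier whose logarithmic growth is strictly supercritical.

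A natural repair is the barrier $\h b_t+L$ with $L\asymp K\log\h t$. Under the radial tilt by $\e^{R}$, the radial process has drift $2+\frac{d-1}{R}\approx 2+\frac{d-1}{2t}$, so it tracks $\h b_t$. The barrier then sits at constant height $L$ above it, and the exponentials in the weight cancel. A first-moment count of first crossings is then $\lesssim (L+1)\e^{-L}$. The killed weight $[\h b_t-R]_++1$ has mean $\lesssim L+1$ uniformly in $t$, up to exponentially small contributions from paths with $R_u$ far below $2u$.

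\textbf{The angular step.} You rightly call this the main obstacle, but it is also mis-described. For $s\gg\h t$ the curvature gain $|B_s^\perp|^2/(2R)\approx|B_s^\perp|^2/(4t)$ does not dominate the heat-kernel loss $|B_s^\perp|^2/(4s)$; if it did, the transverse integral would diverge. Instead it cancels most of that loss, leaving the coefficient $\frac{1}{4s}-\frac{1}{4t}=\frac{\h t}{4st}$. The angular variance is therefore about $\frac{s}{2\h t\,t}$, which increases to $\frac{1}{2\h t}$ rather than being of order $1/t$. This still yields $\h t^{-A^2}$, but with no room to spare as $s\to\infty$: bounding the curvature gain using $R\ge|x_0|$, for instance, already destroys it.

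A clean way to do this jointly with the killing is the skew-product decomposition. Given the radial path, the angle is a spherical Brownian motion run for time $\int_{\h t}^t R_u^{-2}\,\dn u$, independent of the radial tilt and of the radial killing. On $\{R_u\ge(2-\delta)u\}$ this clock is at most $(1+O(\delta))/(4\h t)$. So the truncated moment over $\Gamma^\cc$ is at most $\h t^{-(1-O(\delta))A^2+o(1)}$.

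\textbf{Comparison with the paper.} The paper does this analytically, via McKean's formula and the supersolution $\min\{C\lambda V,1\}$. It uses an anisotropic barrier $D(s)=\bigcap_{\theta\in\Gamma^\cc}\{\theta\cdot x<2s+g(s)-\h\gamma\}$, whose faces grow like $\frac{d-1}{2}\log s$. Here $V$ is an $m$-weighted superposition over $\Gamma^\cc$ of planar, tilted 1D Dirichlet solutions. Planar building blocks carry no curvature, and Lemma~\ref{lem:drift-heat} supplies uniformity in $t$. Since $V\gtrsim 1$ on $\partial D$, the single number $V(\h t,\h x)\lesssim A^2(\log\h t)^{d/2}\h t^{-A^2}$ controls both the crossing probability and the truncated moment.
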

\noindent
In Section~\ref{sec:local}, we show this localization using a Fisher--KPP solution resembling a radially inbound traveling wave that is missing mass in the angular sector $\Gamma$.

\medskip
We now have the pieces in place to show that $Z_\infty$ has large values in $U$.
We use the notation $f \gtrsim g$ to indicate that $f \geq cg$ for some constant $c > 0$, and $f \asymp g$ indicates that $f \gtrsim g$ and $g \gtrsim f$.

Combining Lemma~\ref{lem:total} and Proposition~\ref{prop:local}, we see that $\h Y_t(\Gamma) \gtrsim 1$ in a random neighborhood $\Gamma$ of volume
\begin{equation}
  \label{eq:volume}
  \op{vol}_{S^{d-1}}(\Gamma) \asymp (\log \h t)^{\frac{d-1}{2}} \h t^{-\frac{d-1}{2}}.
\end{equation}
By \eqref{eq:positive},
\begin{equation}
  \label{eq:mass}
  Y_t(\Gamma) \gtrsim \h t^{-1/2} \e^{\beta_{\h t}}.
\end{equation}
Combining \eqref{eq:volume} and \eqref{eq:mass}, we see that the \emph{mean} of $Y_t$ on $\Gamma$ satisfies
\begin{equation*}
  \op{vol}_{S^{d-1}}(\Gamma)^{-1}  Y_t(\Gamma) \gtrsim \e^{\beta_{\h t}} (\log \h t)^{-\frac{d-1}{2}} \h t^{\frac{d-2}{2}}.
\end{equation*}
Sending $t \to \infty$ and using Theorem~\ref{thm:convergence}, the same holds for $Z_\infty$.
The mean is a lower bound on the essential supremum and $\Gamma\subset U$, so
\begin{equation}
  \label{eq:informalbis}
  \norm{Z_\infty}_{L^\infty(U)} \gtrsim \e^{\beta_{\h t}} (\log \h t)^{-\frac{d-1}{2}} \h t^{\frac{d-2}{2}}.
\end{equation}
If $d \geq 3$, it suffices to take 
\begin{equation}
  \label{26jun402}
  \beta_t = - \log \log t,
\end{equation}
and in $d=2$ we can take 
\begin{equation}
  \label{26jun404}
  \beta_t = \log \log t,
\end{equation}
to ensure that in both cases, the right side of \eqref{eq:informalbis} is increasing in $\h t$.
Since $\h t$ is the earliest excursion time after $T$, we have $\h t \geq T$.
This yields
\begin{equation}
  \label{eq:informal}
  \norm{Z_\infty}_{L^\infty(U)} \gtrsim \e^{\beta_T} (\log T)^{-\frac{d-1}{2}} T^{\frac{d-2}{2}}.
\end{equation}
With the above choices of $\beta_t$, we can send $T\to +\infty$
to conclude that
\begin{equation*}
  \norm{Z_\infty}_{L^\infty(U)} = \infty.
\end{equation*}
The difference in sign between the extra drifts in $d\ge 3$ \eqref{26jun402} and $d = 2$ \eqref{26jun404} means that we require the full strength of Theorem~\ref{thm:LIL} in $d=2$.
This greatly complicates the paper, as it is much more difficult to produce the excursions in \eqref{eq:large-excursion} when $\beta_t = +\log \log t$ rather than $-\log \log t$.
We explain this added difficulty in Remark~\ref{rem:3D} below.

\subsection{Proofs of main results}
We now expand this explanation into a rigorous proof of Theorem~\ref{thm:main} given Theorem~\ref{thm:LIL}.
\begin{proof}[Proof of Theorem~\ref{thm:main}]
  Let $d \geq 2$ and 
  \begin{equation}
    \label{26apr910}
    p = 
    \begin{cases}
      \frac{d-1}{d-2} & \text{if } d \geq 3,\\
      \infty & \text{if } d = 2.
    \end{cases}
  \end{equation}
  Let $\beta$ be a smooth function such that
  \begin{equation}
    \label{26apr906}
    \beta_t = \log \log t
  \end{equation}
  for all $t \geq \e$.
  We use this choice of $\beta$ in \eqref{eq:threshold}, and it satisfies the hypotheses of Theorem~\ref{thm:LIL} and \eqref{eq:integral-condition}.
  Fix $\eps \in (0, 1)$ and let $U \subset S^{d-1}$ and $U' \Subset U$ be nonempty, open, deterministic sets.
  (We treat random sets at the end of the proof.)
  Take also some
  \begin{equation}
    \label{26apr902}
    T \geq \frac{2}{\eps}
  \end{equation}
  such that
  \begin{equation}
    \label{eq:dist}
    \op{dist}_{S^{d-1}}(U',U^\cc) > A\sqrt{\frac{\log T}{T}},
  \end{equation}
  with  $A = A(1)$ given by Proposition~\ref{prop:local}.
  By Theorem~\ref{thm:LIL}, particles almost surely cross the radial threshold $b_t$ through the angular sector $U'$ after time $T$.
  Let $\h t \geq T$ denote the earliest such time, with crossing particle $\h p \in \m{P}_{\h t}$ and angle $\h \theta \in U'$.
  This defines $\h Y$ by \eqref{eq:restricted}.
  Let $\Gamma \subset S^{d-1}$ be the open ball of radius $A\h t^{-1/2} \log^{1/2} \h t$ around~$\h \theta$ in the standard metric on $S^{d-1}$.
  By \eqref{eq:dist}, $\Gamma \Subset U$.

  Now define
  \begin{equation}
    \label{26apr904}
    \mu \coloneqq (\log \h t)^{-1/4}.
  \end{equation}
  If $\h Y_t(S^{d-1}) \geq 2 \mu$, then either $\h Y_t(\Gamma) \geq \mu$ or $\h Y_t(\Gamma^\cc) \geq \mu$.
  Taking a union bound and rearranging, we see that
  \begin{equation}
    \label{eq:diff}
    \P[\h Y_t(\Gamma) \geq \mu \mid \m{G}_{U',T}] \geq \P[\h Y_t(S^{d-1}) \geq 2 \mu \mid \m{G}_{U',T}] - \P[\h Y_t(\Gamma^\cc) \geq \mu \mid \m{G}_{U',T}]
  \end{equation}
  for all $t \geq \h t$.
  By Lemma~\ref{lem:total}, there exists $m > 0$ such that for all $t \geq \h t$, $\h Y_t(S^{d-1}) \geq m$ with conditional probability at least $1 - \eps/2$.
  Possibly increasing $T$ so that
  \begin{equation*}
    \mu \leq (\log T)^{-1/4} \leq \frac{m}{2},
  \end{equation*}
  the first term in the right side of \eqref{eq:diff} is at least $1 - \eps/2$.
  Because $\mu \geq \h t^{-1}$, Proposition~\ref{prop:local} and \eqref{26apr902} 
  ensure that the second term in \eqref{eq:diff} is at most $T^{-1} \leq \eps/2$.
  We thus obtain
  \begin{equation*}
    \P[\h Y_t(\Gamma) \geq \mu \mid \m{G}_{U',T}] \geq 1 - \eps \ForAll t \geq \h t.
  \end{equation*}
  Recall that $\h Y$ is related to $Y$ through \eqref{eq:positive}, and $Y_t$ converges weakly in probability to $Z_\infty$ as $t \to \infty$ (Theorem~\ref{thm:convergence}).
  Weak convergence of finite measures on $S^{d-1}$ is metrizable, so we can extract a subsequence of $(Y_t)_{t \geq 0}$ that converges weakly to $Z_\infty$ almost surely.
  Sending $t \to \infty$ along this subsequence, \eqref{eq:positive}, \eqref{26apr906}, and \eqref{26apr904} imply that
  \begin{equation}
    \label{eq:L1}
    \P\big[Z_\infty(\bar\Gamma) \geq \h t^{-1/2} \log^{3/4} \h t\big] \geq 1 - \eps.
  \end{equation}
  By H\"older's inequality, we have
  \begin{equation}
    \label{26apr914}
    \norm{Z_\infty}_{L^p(U)} \geq \op{vol}_{S^{d-1}}(\Gamma)^{-(1 - 1/p)} Z_\infty(\bar\Gamma).
  \end{equation}
  The Euclidean approximation of the spherical metric implies that
  \begin{equation*}
    \op{vol}_{S^{d-1}}(\Gamma) \asymp \left(\frac{\log \h{t}}{\h t}\right)^{\frac{d-1}{2}}.
  \end{equation*}
  The choice of $p$ in \eqref{26apr910} yields $1 - \frac{1}p = \frac{1}{d-1}$, so
  \begin{equation}
    \label{eq:metric}
    \op{vol}_{S^{d-1}}(\Gamma)^{-(1 - 1/p)} \geq c \h t^{1/2} \log^{-1/2} \h t
  \end{equation}
  for some constant $c(d) > 0$.
  Thus if $Z_\infty(\bar\Gamma) \geq \h t^{-1/2} \log^{3/4} \h t$, then we can combine \eqref{26apr914}, \eqref{eq:metric}, and $\h t \geq T$ to obtain
  \begin{equation*}
    \norm{Z_\infty}_{L^p(U)} \geq c \h t^{1/2} \log^{-1/2} \h t \cdot \h t^{-1/2} \log^{3/4} \h t \geq c \log^{1/4} \h t \geq c \log^{1/4} T.
  \end{equation*}
  That is,
  \begin{equation*}
    Z_\infty(\bar\Gamma) \geq \h t^{-1/2} \log^{3/4} \h t \enspace\implies\enspace \norm{Z_\infty}_{L^p(U)} \geq c \log^{1/4} T.
  \end{equation*}
  Using \eqref{eq:L1}, we see that
  \begin{equation*}
    \P\big(\norm{Z_\infty}_{L^p(U)} \geq c \log^{1/4}T\big) \geq 1 - \eps.
  \end{equation*}  
  Sending $T \to \infty$,
  \begin{equation*}
    \P\big(\norm{Z_\infty}_{L^p(U)} = \infty\big) \geq 1 - \eps.
  \end{equation*}
  Because $\eps \in (0, 1)$ was arbitrary, $\norm{Z_\infty}_{L^p(U)} = \infty$ almost surely.
  H\"older's inequality implies that $\norm{Z_\infty}_{L^{p'}(U)} = \infty$ almost surely for all $p' \geq p$ as well.
  Now suppose the set $U$ is random.
  Let $(U_i)_{i \in \N}$ be a deterministic countable base of $S^{d-1}$.
  Almost surely, $\norm{Z_\infty}_{L^{p'}(U_i)} = \infty$ for all $i$.
  Since any (even random) open set contains some $U_i$, the theorem follows.
\end{proof}
\begin{proof}[Proof of Corollary~\ref{cor:main}]
  We rely on the one-dimensional ergodic theorem of Arguin, Bovier, and Kistler~\cite{ArgBovKis13b}, which, in the multi-dimensional context, states that for each $\theta \in S^{d-1}$ and $x \in \R$, almost surely
  \begin{equation}
    \label{eq:ergodic-occupation}
    \lim_{T \to \infty} \frac{1}{T} \int_0^T \tbf{1}\big(M_s(\theta) - 2s + \tfrac 3 2 \log s \leq x\big) \d s = \exp\left(-Z_\infty(\theta) \e^{\f{c}-x}\right),
  \end{equation}
  where $\f{c} \in \R$ is the constant in \eqref{eq:ergodic}.
  Applying this result to a countable dense subset of $x \in \R$ and using monotonicity and continuity, it follows that almost surely, \eqref{eq:ergodic-occupation} holds for all $x \in \R$.
  This refinement will allow us to apply \eqref{eq:ergodic-occupation} to random $x$.
  Drawing on Fubini, we conclude that there exists $A\subset \Omega$ with $\P(A)=1$ such that for every $\omega\in A$, there is a full-measure subset $E_\omega \subset S^{d-1}$ such that for all $\theta \in E_\omega$, $0<Z_\infty(\theta)<\infty$ and \eqref{eq:ergodic-occupation} holds for all $x \in \R$.

  Let $B\subset \Omega$ be such that $\P(B)=1$ and on $B$ the conclusions of Theorem 1.1 hold.
  Fix $\omega\in A\cap B$ and choose arbitrary $L,\eps > 0$ and a nonempty open set $U \subset S^{d-1}$.
  Let $K(\eps) > 0$ satisfy
  \begin{equation}
    \label{eq:tails}
    \exp(-\e^{\f{c} + K}) \leq \frac{\eps}{2} \And \exp(-\e^{\f{c} - K}) \geq 1 - \frac{\eps}{2}.
  \end{equation}
  By Theorem~\ref{thm:main}, there exist $\theta_1,\theta_2 \in U \cap E_\omega$ such that
  \begin{equation}
    \label{eq:large}
    Z_\infty(\theta_1) \geq \e^{L + 2K} Z_\infty(\theta_2) > 0.
  \end{equation}
  To simplify notation, define
  \begin{equation*}
    \h{M}_t(\theta) \coloneqq M_t(\theta) - 2t + \frac 3 2 \log t - \log Z_\infty(\theta).
  \end{equation*}
  Then if $\h{M}_t(\theta_1) \geq -K$ and $\h{M}_t(\theta_2) \leq K$, \eqref{eq:large} yields
  \begin{equation*}
    M_t(\theta_1) - M_t(\theta_2) = \h{M}_t(\theta_1) - \h{M}_t(\theta_2) + \log\frac{Z_\infty(\theta_1)}{Z_\infty(\theta_2)} \geq -2K + L + 2K = L.
  \end{equation*}
  Taking the complement and a union bound, we see that
  \begin{equation}
    \label{eq:union}
    \tbf{1}(M_t(\theta_1) - M_t(\theta_2) \geq L) \geq 1 - \tbf{1}(\h{M}_t(\theta_1) < -K) - \tbf{1}(\h{M}_t(\theta_2) > K).
  \end{equation}
  We now apply \eqref{eq:ergodic-occupation} with $x = \log Z_\infty(\theta_1) - K$ and $x = \log Z_\infty(\theta_2) + K$ to conclude from \eqref{eq:tails} that
  \begin{equation*}
    \lim_{T \to \infty} \frac{1}{T} \int_0^T \tbf{1}(\h M_s(\theta_1) < -K) \d s = \exp(-\e^{\f{c} + K}) \leq \frac{\eps}{2}
  \end{equation*}
  and similarly
  \begin{equation*}
    \lim_{T \to \infty} \frac{1}{T} \int_0^T \tbf{1}(\h M_s(\theta_2) > K) \d s = 1 - \exp(-\e^{\f{c} - K}) \leq \frac{\eps}{2}.
  \end{equation*}
  Now \eqref{eq:liminf} follows from these bounds and \eqref{eq:union}.
\end{proof}

\section{Large excursions: Proof of the first part of Theorem~\ref{thm:LIL}.}
\label{sec:excursions}

In this section, we explain the proof of the first part of Theorem~\ref{thm:LIL}, which states that particles cross the radial threshold $b_t$ defined in \eqref{eq:threshold} infinitely often through every nonempty open sector $U \subset S^{d-1}$, provided $\beta_t$ satisfies \eqref{eq:integral-condition}.
Throughout the remainder of the paper, we assume $\beta$ satisfies the regularity condition in Theorem~\ref{thm:LIL}, which we write as
\begin{equation}
  \label{eq:beta-dot}
  \abss{\dot{\beta}_t} \ll t^{-1}.
\end{equation}
Integrating, this implies that
\begin{equation}
  \label{eq:beta-bounds}
  \abs{\beta_t} \ll \log t.
\end{equation}

\subsection{The Fisher--KPP equation}

We first obtain a Fisher--KPP representation of these excursions.
For the moment, suppose the set $U$ is deterministic; we treat random $U$ at the very end of the section.
Fix times $T<S$ and define the event
\begin{equation*}
  E_{T,S}(U) \coloneqq \big\{\exists \tau \in [T,S],  p \in \m{P}_\tau : R_\tau(p) = b_\tau, \Theta_\tau(p) \in U\big\}.
\end{equation*}
We are interested in the union
\begin{equation}
  \label{eq:event}
  E_T(U) \coloneqq \bigcup_{S\ge T}E_{T,S}(U),
\end{equation}
which captures excursions after time $T$.
We can write its probability as
\begin{equation}
  \label{26apr2310}
  \bar h_T(t,x)\coloneqq \P_{t,x}[E_T(U)]=\lim_{S\to\infty}\P_{t,x}[E_{T,S}(U)],
\end{equation}
where $\P_{t,x}$ denotes the law of a BBM started from $(t,x)$.
We also define the set 
\begin{equation}
  \label{26apr2404}
  \m{A}(t)= \{(R,\theta)\in\R^d: R=b_t, \theta \in U\},
\end{equation}
with the convention that
\begin{equation}
  \label{26apr2306}
  \m{A}(t)=\emptyset \For t<T.
\end{equation}
\begin{lemma}
  \label{lem:ancientbis}
  The function $\bar h_T(t,x)$ from \eqref{26apr2310} is the minimal positive solution to the global-in-time backward problem
  \begin{equation}
    \label{eq:ancientbis}
    \begin{cases}
      \partial_t\bar h_T+\Delta \bar h_T+\bar h_T-\bar h_T^2=0, & t\in\R, x\in\m{A}^\cc(t),\\
      \bar h_T(t,x)=1, & t>T, x\in\m{A}(t).
    \end{cases}
  \end{equation}
\end{lemma}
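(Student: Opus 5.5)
We will prove Lemma~\ref{lem:ancientbis} in the standard way: first for the finite-horizon probabilities $h_{T,S}(t,x) \coloneqq \P_{t,x}[E_{T,S}(U)]$, then by passing to the monotone limit $S \to \infty$. Throughout, BBM started from $(t,x)$ means a single particle at position $x$ at time $t$, evolving forward in time.

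\emph{Step 1: the finite-horizon equation.} Fix $S > T$ and set $h = h_{T,S}$. If $x \in \m{A}(t)$ with $t \in [T,S]$, the event occurs immediately, so $h = 1$. For $t > S$ the event is empty, so $h = 0$. For $(t,x)$ with $x \notin \m{A}(t)$, we condition on the first branching time $\tau \sim \mathrm{Exp}(1)$. Let $\sigma$ be the first time the Brownian path $(s, x + \sqrt{2} B_{s-t})$ hits the space-time surface $\{(s,y) : s \in [T,S],\ y \in \m{A}(s)\}$. The strong Markov property and branching independence give
\begin{equation*}
  h(t,x) = \E\Big[\tbf{1}(\sigma < t+\tau) + \tbf{1}(\sigma \geq t+\tau)\big(1 - (1 - h(t+\tau, X_{t+\tau}))^2\big)\Big].
\end{equation*}
Note that $1 - (1-h)^2 = 2h - h^2$. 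Standard arguments (Feynman--Kac/Duhamel plus interior parabolic regularity) then show that $h$ is a classical solution of $\partial_t h + \Delta h + h - h^2 = 0$ off the surface. It attains the boundary value $1$ on the surface, since the surface is a $\m{C}^2$ moving hypersurface patch ($\beta \in \m{C}^2$) and every point is regular for Brownian motion, being approached from both sides. The terminal condition is $h(S^+,\cdot) = 0$. This mirrors the classical McKean representation, with a killing/absorbing set added.

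\emph{Step 2: limit and solution property.} The events $E_{T,S}$ increase in $S$, so $h_{T,S} \nearrow \bar h_T$, and the limit is bounded by $1$. Interior parabolic estimates are uniform on compact subsets away from $\m{A}$. Boundary barrier estimates near the regular surface are likewise uniform. Hence the limit is a solution of \eqref{eq:ancientbis}, continuous up to $\m{A}$ with value $1$ there. Positivity holds because from any $(t,x)$ the Brownian particle reaches $\m{A}(s)$ for some $s > \max(t,T)$ with positive probability.

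\emph{Step 3: minimality.} This step needs the most care. Let $g > 0$ be any solution of \eqref{eq:ancientbis}. We first reduce to $g \leq 1$: the function $\min(g,1)$ is a supersolution, because the constant $1$ solves the equation and the minimum of two supersolutions is a supersolution. We then compare $g$ with $h_{T,S}$ on the domain $\{t < S\} \setminus \m{A}$:
\begin{itemize}
  \item on the surface, $g = 1 = h_{T,S}$;
  \item at the terminal time, $g(S,\cdot) \geq 0 = h_{T,S}(S^+,\cdot)$.
\end{itemize}
The equation is backward in time, and the nonlinearity $h - h^2$ is Lipschitz on $[0,1]$. The comparison principle for the backward problem therefore gives $g \geq h_{T,S}$ for $t \leq S$. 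Here we use that both functions are bounded and that the domain is unbounded in $x$, where the Phragm\'en--Lindel\"of-type maximum principle for bounded functions applies.

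Probabilistically, the same comparison can be seen another way. Consider $\prod_{p} (1 - g(s, X_s(p)))$, taken over the living particles stopped upon hitting $\m{A}$. This product is a martingale up to time $S$. Its value at time $S$ is at most $\tbf{1}(E_{T,S}^\cc)$, since hitting $\m{A}$ contributes a factor $0$. Taking expectations yields $1 - g \leq 1 - h_{T,S}$, i.e. the same inequality.

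Letting $S \to \infty$ gives $g \geq \bar h_T$, so $\bar h_T$ is the minimal positive solution. The main technical obstacle is justifying the boundary regularity and comparison near the moving surface $\m{A}(t)$, particularly at its relative boundary $\partial U$ and at $t = T$. There I would work with the stopped-martingale formulation, which requires only that $g$ be continuous and that Itô's formula hold off $\m{A}$.
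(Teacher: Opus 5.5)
Your proposal is correct and follows essentially the same route as the paper. Both arguments use finite-horizon probabilities $h_{T,S}$ characterized by a first-branching renewal identity (the paper writes it for $z_{T,S}=1-h_{T,S}$ with a killed propagator), a monotone limit $S\to\infty$ justified by parabolic estimates, and minimality by comparing any positive solution with $h_{T,S}$ using the zero terminal data. Your extra care in Step~3 fills in details the paper leaves implicit: replacing $g$ by $\min(g,1)$ so the comparison is between bounded functions, or alternatively the McKean product argument, where the product is a submartingale once $g$ is replaced by the supersolution $\min(g,1)$ and still gives the right inequality.
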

\noindent
This is similar to Lemma~A.1 in~\cite{Maillard13} in the one-dimensional setting.
\begin{proof}
  Given $t < S$ and $x\in \m{A}^\cc(t)$, let 
  \begin{equation}
    \label{26apr2308}
    h_{T,S}(t,x):=\P_{t,x}[E_{T,S}(U)]
  \end{equation}
  be the probability that a BBM started at time $t$ and position $x$ crosses the threshold $\m{A}(\tau)$ at some time $\tau \in [T,S]$.
  Let
  \begin{equation}
    \label{26apr2304}
    z_{T,S}(t,x) \coloneqq 1-h_{T,S}(t,x)=\P_{t,x}[E_{T,S}^\cc(U)].
  \end{equation}
  The branching property implies the following renewal identity for any $\tau \in (t, S)$:
  \begin{align}
    z_{T,S}(t,x)&=\e^{-(\tau-t)}\E[z_{T,S}(\tau,x+B_{\tau-t})]+
                  \int_t^\tau \e^{-(s-t)}\E[z_{T,S}^2(s,x+B_{s-t})] \d s\nonumber\\
                &=\e^{(\tau -t)(\Delta - 1)}[z_{T,S}(\tau,\anon)](x)+\int_t^\tau \e^{(s-t)(\Delta-1)}[z_{T,S}^2(s,\anon)](x) \d s.\label{26apr1008bis}  
  \end{align}
  Here, $B_s$ is a Brownian motion with diffusivity $\sqrt{2}\op{Id}$ that starts at time $t < S$ and position $x\in\R^d$, and dies if it reaches the moving boundary $\m{A}$ during the time interval $[T,S]$.
  If  $t<T$, then at times $s \in (t,T)$, there is no killing.
  The propagator $\e^{s \Delta}$ in \eqref{26apr1008bis} includes these boundary conditions.

  The first term on the right side of \eqref{26apr1008bis} corresponds to the event that the initial particle does not branch in the time interval $[t,\tau]$, which has probability $\e^{-(\tau-t)}$.
  The second term accounts for a first branching at time $s\in[t,\tau]$.
  In this case, the descendants of each child must avoid the threshold, giving $z_{T,S}^2$ by independence.

  The identity \eqref{26apr1008bis} is the mild form of the backward-in-time Fisher--KPP equation
  \begin{equation*}
    \partial_t z_{T,S}+\Delta z_{T,S} -z_{T,S}+z_{T,S}^2=0, \quad t<S,\, x\in \m{A}^\cc(t).
  \end{equation*}
  In addition, we have the terminal condition
  \begin{equation*}
    z_{T,S}(S,x)=1 \For x \in \m{A}^\cc(S)
  \end{equation*}
  and the boundary condition 
  \begin{equation*}
    z_{T,S}(t,x)=0 \For T\le t\le S, |x|=b_t, \theta\in U.
  \end{equation*}
  This boundary condition disappears at time $T$, leaving $z_{T,S}$ to solve Fisher--KPP in the whole space when $t < T$.
  One can view this as a whole-space problem for~$t < T$ with terminal data $z_{T,S}(T,x)$ at $t = T$.
  Using \eqref{26apr2304}, $h_{T,S}$ now satisfies
  \begin{equation}
    \label{26apr1012bis}
    \begin{cases}
      \partial_t h_{T,S}+\Delta h_{T,S}+h_{T,S}-h_{T,S}^2=0 & t<S,~x\in\m{A}^\cc(t),\\
      h_{T,S}(S,x)=0, & x\in\m{A}^\cc(S),\\
      h_{T,S}(t,x)=1, & x\in\m{A}(t),~T<t<S.
    \end{cases}
  \end{equation}
  By its definition~\eqref{26apr2308}, the family $(h_{T,S})_{S \geq T}$ is increasing in $S.$
  By \eqref{26apr2310}, its limit as $S \to \infty$ is $\bar{h}_T(t, x)$.
  It follows from parabolic estimates that $\bar h_T(t,x)$ solves the global-in-time backward problem \eqref{eq:ancientbis}, recalling the convention \eqref{26apr2306}.
  Moreover, the terminal condition in \eqref{26apr1012bis} implies that for every $S > T,$ $h_{T,S}$ lies below every global-in-time positive solution of \eqref{eq:ancientbis}.
  It follows that the limit $\bar{h}_T$ is the least such solution.
\end{proof}

\subsection{Proof of the first part of Theorem~\ref{thm:LIL}}

Assuming \eqref{eq:integral-condition}, we wish to show that for any $T \geq 2$, the minimal positive entire solution of \eqref{eq:ancientbis} is
\begin{equation}
  \label{26apr2316}
  \bar h_T(t,x)\equiv 1.
\end{equation}
We find it convenient to  reverse time in \eqref{eq:ancientbis} by setting
\begin{equation*}
  h_T(t,x) \coloneqq \bar h_T(-t,x).
\end{equation*}
This is an ancient solution of the forward-in-time Fisher--KPP problem
\begin{equation}
  \label{26apr2312}
  \begin{cases}
    \partial_th_T=\Delta h_T+h_T-h_T^2, & t\in\R, x\in\m{A}^\cc(-t)\\
    h_T(t,x)=1, & t<-T,~x\in\m{A}(-t).
  \end{cases}
\end{equation}
The key estimate in the proof of \eqref{26apr2316} states that far in the past, $h \gtrsim x\e^{-x}$ relative to $b_{\abs{t}}$.
\begin{proposition}
  \label{prop:der-mart-multi-bis}
  Let $\beta_t$ satisfy \eqref{eq:integral-condition}.
  Then for every nonempty open $U' \Subset U$, there exists $\tmax(d,U') \leq -T$ and $c(d,U') > 0$ such that
  \begin{equation}
    \label{26apr2325}
    h_T(t, r, \theta) \geq c [b_{\abs{t}} - r] \e^{-[b_{\abs{t}} - r]}
  \end{equation}
  for all $t \leq \tmax$, $\theta \in U'$, and $b_{\abs{t}} - r \in [4 \log \abs{t}, \abs{t}/8]$.
\end{proposition}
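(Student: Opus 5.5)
We propose to build an explicit subsolution of the forward problem \eqref{26apr2312} and compare it with $h_T$. By minimality in Lemma~\ref{lem:ancientbis}, $h_T$ is the increasing limit of the finite-horizon solutions $h_{T,S}$. For $t<-T$ these equal $1$ on the moving boundary $\m{A}(-t)$ and vanish at the initial time $-S$. It therefore suffices to show that, on a suitable space-time domain lying just inside the threshold, each $h_{T,S}$ (for $S$ large) dominates a barrier of the form
\[
\underline{h}(t,r,\theta)=c\,\zeta\big(b_{|t|}-r\big)\,\e^{-[b_{|t|}-r]}\,\phi(t,\theta).
\]
Here $\zeta(y)\approx y$ is a one-dimensional profile and $\phi$ is an angular cutoff equal to a positive constant on $U'$ and vanishing near $\partial U$.

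The first step is to change to the moving frame $y=b_{|t|}-r$, writing $h=\e^{-y}w$ and keeping the angular variable. Since $\dot b=2+\frac{d-2}{2t}+\dot\beta$ and the Laplacian in polar coordinates contributes $\frac{d-1}{r}\partial_r$ with $r\approx 2|t|$, the linearized equation for $w$ becomes
\[
\partial_t w=\partial_y^2 w+\Big(\tfrac{d-2}{2|t|}-\tfrac{d-1}{r}+\dot\beta\Big)w+\ldots+r^{-2}\Delta_{S^{d-1}}w .
\]
The drift mismatch is $\approx -\frac{1}{2|t|}$ plus $\dot\beta$. Up to the $\beta$ correction, this is exactly the one-dimensional critical Bramson setting, with Dirichlet data on the boundary $y=0$ and a $\frac{1}{2|t|}$ potential term.

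The angular Laplacian carries the factor $r^{-2}\asymp t^{-2}$, so the cutoff $\phi$ only costs $O(t^{-2})$. On the angular scale of $U\setminus U'$, which is order one, this loss is integrable and negligible. Separately, the nonlinearity $-h^2$ is harmless where $y\geq 4\log|t|$, since there $h\le |t|^{-4}y$.

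The core of the argument is the one-dimensional lower bound, presumably the content of Section~\ref{sec:1D-excursions}. We must show that a KPP solution in the region $y>0$, equal to $1$ on the boundary $y=0$ and started from $0$ at time $-S$, is at least $c\,y\,\e^{-y}$ for $y\in[4\log|t|,|t|/8]$ at sufficiently negative times. Heuristically, $w$ solves a heat equation on the half-line with a source of size $\partial_y w|_{y=0}$ coming from the boundary. The potential $\frac{1}{2|t|}-\frac{d-1}{2|t|}+\frac{d-2}{2|t|}$ cancels with the $\tfrac{d-2}{2}\log t$ choice in $b$, and the remaining $\dot\beta$ term integrates to the factor $\e^{-\beta}$. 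The mass that flows in from the boundary over a time window $[t-\tau,t]$ reaching depth $y\sim\sqrt{\tau}$ accumulates to
\[
\int^{|t|}\frac{\dn r}{r\,\e^{\beta_r}}.
\]
Condition \eqref{eq:integral-condition} says this diverges, so after saturation by the nonlinearity (via comparison with $\min(1,\cdot)$) we obtain $w\gtrsim y$ in the Gaussian window.

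I would implement this step as follows. First, construct on each dyadic time block $[-2^{k+1},-2^k]$ an explicit heat-kernel subsolution on the half-line with Dirichlet data, so that the block contributes an increment of order $\e^{-\beta_{2^k}}$ to $w/y$. Then use divergence of the sum together with the concavity of $h-h^2$ to conclude that the contributions cannot stay small. The logic is a contradiction argument: if $w/y$ stayed below a small $c$, the linear superposition would make it exceed $c$.

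The main obstacle is this one-dimensional step. One difficulty is controlling the boundary flux so that it is of order $1$ rather than decaying; this requires working in the slightly shifted frame $y-4\log|t|$, where the nonlinear saturation occurs. The other is handling the error terms $t\dot\beta\to0$ uniformly. For both, I would first try the barrier
\[
\zeta(y)=y\,\e^{-y^2/(8|t|)}
\]
with a time-dependent amplitude solving $\dot A=-A\dot\beta+O(t^{-3/2})$. I would then check the boundary ordering at $y=4\log|t|$ against the subsolution supplied by a spine-free comparison with $h_{T,S}\to 1$ near the threshold.
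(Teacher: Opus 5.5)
You have the outer layer right; it agrees with the paper's Section~\ref{sec:excursions-proof}: work in the frame $y=b_{\abs{t}}-r$, pay for the angular cutoff with the integrable factor $r^{-2}\lesssim\abs{t}^{-2}$, and reduce to a half-line lower bound. Two details there need repair.
First, on the window $r$ goes down to about $\frac{15}{8}\abs{t}$, so $\frac{d-1}{r}-\frac{d-1}{2\abs{t}}$ is of order $\abs{t}^{-1}$, comparable to the critical terms. You cannot remove it by setting $r\approx2\abs{t}$; the paper uses its favorable sign for subsolutions decreasing in $y$, plus the bounded shift $\xi$ where $r>2\abs{t}$.
Second, the cutoff must satisfy $-\Delta_\theta\phi\le C\phi$, which is why the paper uses the Dirichlet eigenfunction.

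The genuine gap is the one-dimensional core, which you identify as the main obstacle but describe incorrectly. After the tilt $h=\e^{-y}w$ the equation reads $\partial_tw=\partial_y^2w+V(w-\partial_yw)-\e^{-y}w^2$ with $V=\frac{1}{2\abs{t}}-\dot\beta_{\abs{t}}$. The $\frac{d-2}{2}\log t$ in $b$ only converts the radial drift into the 1D drift $\dot b^{\textnormal{1D}}_{\abs{t}}$. The $\frac{1}{2\abs{t}}$ potential does not cancel, and it is the mechanism:
\begin{itemize}
\item It acts on the boundary-level constant part of $w$ (the $+1$ in the paper's $\cubar a_z(2^{-4}z+1)$) and injects at time $s$ a forcing of size $\dn s/\abs{s}$.
\item Dirichlet heat flow turns this into slope $\asymp\abs{s}^{-1/2}\,\dn s/\abs{s}$ for $\abs{s}\gg\abs{t}$.
\item $V$ then amplifies that slope by $\lambda(t)/\lambda(s)$, with $\lambda$ as in \eqref{26apr1620}.
\end{itemize}
So the slope at time $\f{t}$ is of order $\lambda(\f{t})\int_{\f{t}^2}^{T}\frac{\dn r}{r\exp\beta_r}$. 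Contributions come from the far past. A dyadic block adds $\asymp\lambda(\f{t})\e^{-\beta_{2^k}}$, not $\e^{-\beta_{2^k}}$. The integral must reach $\lambda(\f{t})^{-1}=\abs{\f{t}}^{1/2}\e^{-\beta_{\abs{\f{t}}}}$, as in \eqref{eq:T-choice}.

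Your barrier $A(t)\,y\,\e^{-y^2/(8\abs{t})}$ is not a subsolution. For $1\ll y\ll\abs{t}^{1/2}$, $\partial_y^2$ of the Gaussian costs $\frac{3y}{4\abs{t}}$, while $V$ supplies only $\frac{y}{2\abs{t}}$. This forces $\der{}{t}\log A\le-\frac{1}{4\abs{t}}-\dot\beta_{\abs{t}}$, which is incompatible with $\dot A=-A\dot\beta+\m{O}(\abs{t}^{-3/2})$. Near $y=0$, moreover, the term $-V\partial_yw\approx-VA$ is uncompensated, because your barrier has no constant part. More fundamentally, the divergence has to enter the amplitude quantitatively: if \eqref{26apr2325} held for a convergent integral, the proof of Proposition~\ref{prop:hitting} would give $h_T\equiv1$, contradicting Proposition~\ref{prop:necessary}.

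The saturation step does not close either.
\begin{itemize}
\item A solution $\underline h$ of the linearized equation satisfies $\partial_t\underline h-\Delta\underline h-\underline h+\underline h^2=\underline h^2\ge0$, so it is a Fisher--KPP \emph{super}solution. Capping it by $\min(1,\cdot)$ gives upper bounds, as in the proof of Lemma~\ref{lem:lacuna-small}, not lower bounds.
\item Your contradiction argument yields at best some point where $w/y>c$. It does not give the bound at every $t\le\tmax$ and every $y$ in the window.
\end{itemize}
The paper avoids both problems by fixing the target time $\f{t}$. It starts the linear problem \eqref{eq:finite-time} at $-T(\f{t})$, with $T(\f{t})$ tuned through the divergent integral. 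Lemma~\ref{lem-26apr2002} then gives $z(\f{t},x)\ge x$, together with $z\le16x$ and $\abs{\partial_xz}\le16$ at all earlier times. In the frame shifted by $2\log\abs{t}$ the nonlinearity carries the integrable factor $\abs{t}^{-2}\e^{-x}$, and this two-sided control lets the multiplier $a_z$ absorb it. The same control keeps the subsolution decreasing in $x$, which the multi-dimensional lift requires.

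Finally, your boundary comparison at $y=4\log\abs{t}$ needs $h_T\ge c\,\e^{-y}$ there, with $c$ independent of $t$. ``Comparison with $h_{T,S}\to1$ near the threshold'' does not supply this. The paper gets it from Lemma~\ref{lem:exp-lower}, using the minimal steady state $\e^{-x}v$ with $v(\infty)>0$, the bound $\dot b^{\textnormal{1D}}<2$, and a cutoff at $x=\abs{t}$. This is then lifted to \eqref{eq:h-exp}.
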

\noindent
The proof of Proposition~\ref{prop:der-mart-multi-bis} is rather technical, and constitutes Sections~\ref{sec:1D-excursions} and \ref{sec:excursions-proof}.

Here, we use a result of \cite{BerKimLubMalZei24} to show that Proposition~\ref{prop:der-mart-multi-bis} forces $h_T$ to be $1$.
The key element of the estimate \eqref{26apr2325} is the linear prefactor $b_{|t|}-r$, which enables a connection with the approximate derivative martingale $Y_t$.
\begin{proposition}
  \label{prop:hitting}
  For all $T \geq 2$, $h_T \equiv 1$.
\end{proposition}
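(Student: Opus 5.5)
We aim to show that $\bar h_T(s,x)=\P_{s,x}[E_T(U)]=1$ for every starting time $s$ and position $x$. The plan is to run the BBM from $(s,x)$ for a long time $t$ and then use the branching property together with Proposition~\ref{prop:der-mart-multi-bis} at the particles present at time $t$. Since the problem is time-homogeneous apart from the moving boundary, it suffices to treat a BBM started at the origin at time $0$. For each large time $t$ with $-t\le \tmax$, condition on $\m{F}_t$. By the Markov property and independence of subtrees, the probability that no descendant later crosses $b$ through $U$ is at most
\begin{equation*}
  \prod_{p\in\m{P}_t}\big(1-h_T(-t,X_t(p))\big)\le \exp\Big(-\sum_{p\in\m{P}_t} h_T(-t,X_t(p))\Big).
\end{equation*}
Here the time-$t$ particle sits at time $t$ of the backward problem, which corresponds to time $-t$ for the forward-time $h_T$. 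Fix $U'\Subset U$. For particles with $\Theta_t(p)\in U'$ and $b_t-R_t(p)\in[4\log t,t/8]$, Proposition~\ref{prop:der-mart-multi-bis} gives
\begin{equation*}
  h_T \ge c\,[b_t-R_t(p)]\,\e^{-[b_t-R_t(p)]}.
\end{equation*}

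Next we compare the sum with $Y_t$. We have
\begin{equation*}
  \e^{-[b_t-R_t]}=t^{-\frac{1}{2}}\e^{-\beta_t}\,\e^{-[2t+\frac{d-1}{2}\log t-R_t]},
\end{equation*}
so the sum of these lower bounds is comparable to $t^{-1/2}\e^{-\beta_t}Y^{\mathrm{win}}_t(U')$. Here $Y^{\mathrm{win}}$ is $Y_t$ restricted to the window $b_t-R\in[4\log t,t/8]$, which carries the bulk of the mass because $Y_t$ is dominated by particles at distance about $\sqrt t$ behind the frontier. This only gives mass of order $t^{-1/2}\e^{-\beta_t}$ at a single time, which tends to zero. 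A single time therefore does not suffice; this is where the integral test \eqref{eq:integral-condition} must enter.

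To exploit it, we use the fact that fresh independent chances accumulate over time, as in Borel--Cantelli or Hu's integral test. Take a sparse sequence of times, for instance $t_k=2^k$. The conditional probability of crossing during $[t_k,t_{k+1}]$ from particles in the window at time $t_k$ should be bounded below by roughly $t_k^{-1/2}\e^{-\beta_{t_k}}Y_{t_k}(U')\cdot\sqrt{t_k}$. The extra factor $\sqrt{t_k}$ arises because crossings can occur throughout a time interval of length comparable to $t_k$, with effective contributions from particles at each scale. Then
\begin{equation*}
  \sum_k \e^{-\beta_{t_k}}\asymp\int\frac{\dn r}{r\,\e^{\beta_r}}=\infty,
\end{equation*}
while $Y_{t_k}(U')\to Z_\infty(U')>0$ in probability by Theorem~\ref{thm:convergence}; the positivity uses $Z_\infty>0$ a.e. A conditional Borel--Cantelli (L\'evy) argument then gives crossing almost surely, hence $h_T\equiv1$.

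The main obstacle is getting the correct per-dyadic-block probability. Proposition~\ref{prop:der-mart-multi-bis} already encodes crossing \emph{at any later time}, so its bound integrates over the future. The cleanest way to avoid double counting is therefore to use one time $t$ and the product bound above, but arranged so that the sum diverges. I would choose the time parameter in Proposition~\ref{prop:der-mart-multi-bis} to be $-t$ with $T$ replaced by a larger $T'\gg t$. This is legitimate because $h_T\ge h_{T'}$, as $E_{T'}(U)\subset E_T(U)$. The constant $c$ should then capture the divergence of $\int_t^{T'}\dn r/(r\e^{\beta_r})$, and this would need to be verified against the proof of the proposition, since as stated $c$ is independent of $T$. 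If that fails, I would instead show that $h_T$ is bounded below by a positive constant uniformly in the past. The minimal solution would then satisfy $\inf h_T>0$, and a Liouville-type argument for ancient Fisher--KPP solutions with values in $(\delta,1]$ would force $h_T\equiv1$.
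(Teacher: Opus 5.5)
There is a genuine gap, and it comes from a sign slip in your conversion to $Y_t$. By \eqref{eq:threshold}, $b_t - R = \bigl(2t + \tfrac{d-1}{2}\log t - R\bigr) - \tfrac12\log t + \beta_t$, so
\begin{equation*}
  \e^{-[b_t - R]} = t^{1/2}\e^{-\beta_t}\,\e^{-[2t + \frac{d-1}{2}\log t - R]},
\end{equation*}
with $t^{+1/2}$, not $t^{-1/2}$. With the correct sign, your single-time product bound already finishes the proof, and this is exactly the paper's argument. On the window, the lower bounds from Proposition~\ref{prop:der-mart-multi-bis} sum to $\gtrsim t^{1/2}\e^{-\beta_t}\,Y^{\mathrm{win}}_t(U')$. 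By \eqref{eq:beta-bounds}, $t^{1/2}\e^{-\beta_t}\to\infty$. Along a subsequence on which $Y^{\mathrm{win}}_t\to Z_\infty$ almost surely, the portmanteau lemma gives $\liminf Y^{\mathrm{win}}_t(U')\geq Z_\infty(U')>0$. Weak convergence gives only this liminf, not convergence of $Y_t(U')$, but the liminf is all you need. Hence $\E\exp(-\sum_p h_T)\to 0$ and $h_T(0,0)=1$. The strong maximum principle then gives $h_T\equiv 1$; note that the moving boundary breaks time-homogeneity, so you cannot simply reduce to the origin at time $0$ on those grounds.

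Because of the slip, you concluded that one time cannot suffice and that \eqref{eq:integral-condition} must enter at this stage. It does not. The integral test was already spent inside Proposition~\ref{prop:der-mart-multi-bis}, where $\m{I}(\infty)=\infty$ permits the choice of $T(\f{t})$ in \eqref{eq:T-choice} and hence produces the linear prefactor $[b_{\abs{t}}-r]$. That prefactor gains a factor of order $\sqrt{t}$ on the window. It is precisely what upgrades the additive-martingale scaling $\e^{-\beta_t}$ of Remark~\ref{rem:3D} to $t^{1/2}\e^{-\beta_t}$.

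Your proposed remedies are also not proofs as written. The extra $\sqrt{t_k}$ per dyadic block is only asserted, and the blocks are not independent. Since Proposition~\ref{prop:der-mart-multi-bis} already accounts for all future crossings, summing over blocks would double count. The constant $c(d,U')$ is independent of $T$ and does not absorb any divergence. Finally, the Liouville fallback presupposes a uniform positive lower bound on $h_T$, for which you give no mechanism.
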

\begin{proof}
  By definition $0 \leq h_T \leq 1$, so 
  the strong maximum principle for the Fisher--KPP equation implies that it suffices to show that 
  \begin{equation}
    \label{26apr2318}
    h_T(0,0)=1.
  \end{equation}
  Indeed, this would imply that $h_T(t,x)=1$ for all $t<0$ and $x\in\R^d$, which would immediately entail that $h_T(t,x)=1$ for all $t>0$ as well.

  Define the ``windowed'' BBM population
  \begin{equation}
    \label{eq:windowbis}
    \m{P}_t^{\textnormal{win}} \coloneqq \{p \in \m{P}_t : t^{1/6} \leq 2t - R_t(p) \leq t^{2/3}\}
  \end{equation}
  and the windowed derivative martingale
  \begin{equation}
    \label{eq:Y-window}
    Y_t^{\textnormal{win}} \coloneqq \left(\frac{\sqrt{\pi}}{t}\right)^{\frac{d-1}{2}} \sum_{p \in \m{P}_t^{\textnormal{win}}} [2t - R_t(p)] \e^{-[2t - R_t(p)]} \delta_{\Theta_p(t)}.
  \end{equation}

  By McKean's formula, for all $t \geq 0$, we have the representation
  \begin{align}
    h_T(0, 0)& = 1 - \E_{-t,0}\prod_{p \in \m{P}_0} \big[1 - h_T\bigl(-t, X_0(p)\bigr)\big]=1 - \E_{0,0}\prod_{p \in \m{P}_t} \big[1 - h_T\bigl(-t, X_t(p)\bigr)\big]\nonumber\\
             & \geq 1 - \E \exp\Big(\sum_{p \in \m{P}_t^{\textnormal{win}}} \log\big[1 - h_T\bigl(-t, X_t(p)\bigr)\big]\Big).\label{eq:McKean-1}
  \end{align}
  Here we use the shorthand $\E$ for $\E_{0,0}$.
  It follows from \eqref{eq:threshold} and \eqref{eq:beta-dot} that there exists $T_1$ such that for all $\abs{t} > T_1$,
  \begin{equation}
    \label{26apr2321}
    b_{\abs{t}} - r \in [4 \log \abs{t},\abs{t}/8]\quad \text{if} \enspace 2\abs{t} - r \in [\abs{t}^{1/6},\abs{t}^{2/3}].
  \end{equation}
  Fix a nonempty open $U' \Subset U$.
  Then Proposition~\ref{prop:der-mart-multi-bis}, \eqref{eq:McKean-1}, and \eqref{26apr2321} yield
  \begin{equation}
    \label{26apr2322}
    h_T(0, 0) \geq 1 - \E \exp\Big(\sum_{p \in \m{P}_t^{\textnormal{win}}} 
    \log\big\{1 - c [b_t - R_t(p)] \e^{-[b_t - R_t(p)]} \tbf{1}_{U'}\bigl(\Theta_p(t)\bigr)\big\}\Big)
  \end{equation}
  for all $t\ge \max\{-\tmax(d,U'),T_1\}$.

  Note that, simply from \eqref{eq:threshold}, we can re-write the exponential in \eqref{26apr2322} as  
  \begin{equation}
    \label{26apr2323}
    \e^{-[b_t - R_t(p)]} = t^{-(d-2)/2} \e^{-\beta_t} \e^{-[2t - R_t(p)]}.
  \end{equation}
  For the windowed population, $2t - R_t(p) \ge t^{1/6}$    while $\abs{b_t - 2t} \lesssim \log t$.
  It follows that the prefactor in \eqref{26apr2322} can be bounded as 
  \begin{equation}
    \label{26apr2324}
    b_t - R_t(p) \geq \frac{2t - R_t(p)}{2} \For p\in\m{P}_t^{\textnormal{win}}
  \end{equation}
  for $t$ sufficiently large.
  Using \eqref{26apr2323} and \eqref{26apr2324} in \eqref{26apr2322} shows that $h_T(0,0)$ is bounded from below by
  \begin{equation*}
    1- \E \exp\Big(\sum_{p \in \m{P}_t^{\textnormal{win}}} \log\big[1 - \tfrac{c}{2} t^{-\frac{d-2}{2}} \e^{-\beta_t} [2t - R_t(p)] \e^{-[2t - R_t(p)]}\tbf{1}_{U'}\bigl(\Theta_p(t)\bigr)\big]\Big).
  \end{equation*}
  Recall the windowed version $Y_t^{\textnormal{win}}$ of the derivative martingale defined in \eqref{eq:shaved}.
  Because $\log(1 - a) \leq -a$, we obtain
  \begin{align}
    h_T(0,0) &\geq 1 - \E \exp\Big(-\tfrac{c}{2} t^{-\frac{d-2}{2}} \e^{-\beta_t} \sum_{p \in \m{P}_t^{\textnormal{win}}} [2t - R_t(p)] \e^{-[2t - R_t(p)]} \tbf{1}_{U'}\bigl(\Theta_p(t)\bigr)\Big)\nonumber\\
             &\geq 1 - \E \exp\Big[-\frac{c}{2\pi^{(d-1)/4}} t^{1/2} \e^{-\beta_t} Y_t^{\textnormal{win}}(U')\Big].\label{eq:der-mart-bis}
  \end{align}
  for all sufficiently large $t$.
  
  Now, Theorem~1.4 of \cite{BerKimLubMalZei24} states that $Y_t^{\textnormal{win}} \to Z_\infty$ in probability in the vague topology as $t \to \infty$.
  As noted in the proof of Theorem~1.1, it follows that $Y_t^{\textnormal{win}} \to Z_\infty$ almost surely along a subsequence.
  Also, Theorem~1.3 of \cite{StaBerMal21} ensures that $Z_\infty$ is almost surely positive on nonempty open sets (the theorem states that $Z_\infty(S^{d-1}) > 0$, but the proof implies $Z_\infty(U) > 0$ for any open $U$).
  Combining these results, the portmanteau lemma implies that along the subsequence,
  \begin{equation*}
    \liminf_{t \to \infty} Y_t^{\textnormal{win}}(U') \geq Z_\infty(U') > 0 \quad \text{a.s.}
  \end{equation*}
  By \eqref{eq:beta-bounds}, $t^{1/2} \e^{-\beta_t} \to \infty$ as $t \to \infty$.
  It follows that, still along the subsequence,
  \begin{equation*}
    \liminf_{t \to \infty} \frac{c}{2\pi^{(d-1)/4}} t^{1/2} \e^{-\beta_t} Y_t^{\textnormal{win}}(U') = \infty \quad \text{a.s.}
  \end{equation*}
  Sending $t \to \infty$ in the right side of \eqref{eq:der-mart-bis}, we see that $h_T(0,0) \geq 1.$
  Of course we also know that $h_T \leq 1$, whence $h_T(0,0) = 1$, which is \eqref{26apr2318}.
\end{proof}
\begin{remark}
  \label{rem:3D}
  We can now see why the ``subcritical'' regime $d \geq 3$ and $p > \frac{d-1}{d-2}$ permits a much shorter proof.
  As discussed after \eqref{eq:informal}, subcriticality allows us to use $\beta_t = -\log\log t$ in the threshold \eqref{eq:threshold}.

  This relaxation allows us to omit the linear prefactor in \eqref{26apr2325}.
  Indeed, given a pure exponential lower bound in \eqref{26apr2325}, we would obtain an analogue $A_t$ of an \emph{additive} martingale (as in \eqref{eq:additive} below) rather than the \emph{derivative} martingale $Y_t$ appearing above.
  The Seneta--Heyde scaling~\cite{HuShi09,AidShi14} implies that $t^{1/2} A_t \to CZ_\infty$, so the bracketed expression in \eqref{eq:der-mart-bis} is then of order $\e^{-\beta_t}$.
  This diverges if $\beta_t = -\log \log t$, and the proof of Proposition~\ref{prop:hitting} goes through.
  It is much easier to prove \eqref{26apr2325} without the linear prefactor, as one can see comparing the proofs of Lemmas~\ref{lem:exp-lower} and \ref{lem:der-mart-lower} below.
  
  Conversely, to treat the critical regime such as $d = 2$, we must use a more stringent threshold with $\beta_t = +\log\log t$.
  This requires the linear prefactor in \eqref{26apr2325} to overcome the decaying factor $\e^{-\beta_t}$ in \eqref{eq:der-mart-bis}, and the prefactor demands a considerably more sophisticated argument.
\end{remark}
We can now prove the positive part of Theorem~\ref{thm:LIL}.
\begin{proof}[Proof of first part of Theorem~\ref{thm:LIL}]
  Assume $\beta_t$ satisfies \eqref{eq:integral-condition} and \eqref{eq:beta-dot}.
  By \eqref{26apr2310} and Proposition~\ref{prop:hitting},
  \begin{equation*}
    \P\Big[\bigcap_{n \in \N} E_n(U)\Big] = 1
  \end{equation*}
  for $E_T(U)$ defined in \eqref{eq:event}.
  Working in this intersection, let $t_n$ be the earliest time contributing to $E_n(U)$, with corresponding particle $p_n \in \m{P}_{t_n}$.
  The random sequence $(t_n)_{n \in \N}$ is increasing and satisfies $t_n \geq n$, so $t_n \nearrow \infty$.
  Moreover, \eqref{eq:large-excursion} holds by the definition of $E_n(U)$.
  For random $U$, follow the same argument as in the proof of Theorem~\ref{thm:main}.
\end{proof}

\section{One-dimensional analysis}
\label{sec:1D-excursions}

To prove Proposition~\ref{prop:der-mart-multi-bis}, we construct a suitable subsolution to the Fisher--KPP problem \eqref{26apr2312}.
This anisotropic multi-dimensional subsolution uses 1D subsolutions as building blocks.
In this section, we construct these 1D subsolutions, which incidentally yield a largely PDE-based proof of one direction of~\cite[Theorem~1.1]{Hu15}.

Taking $d = 1$ in \eqref{eq:threshold}, we define
\begin{equation}
  \label{eq:1D-barrier}
  b_t^{\textnormal{1D}} \coloneqq 2t - \frac{1}{2} \log t + \beta_t.
\end{equation}
We note for future use that \eqref{eq:beta-dot} yields
\begin{equation}
  \label{eq:drift-sign}
  \dot{b}_t^{\textnormal{1D}} < 2.
\end{equation}
for $t$ sufficiently large.

We study a 1D analogue of \eqref{26apr2312} in the ${b}_t^{\textnormal{1D}}$-moving frame:
\begin{equation}
  \label{eq:ancient}
  \begin{cases}
    \partial_t u = \partial_x^2 u + \dot{b}_{\abs{t}}^{\textnormal{1D}} \partial_x u + u - u^2,& t < 0,~x>0,\\
    u(t, 0) = 1.
  \end{cases}
\end{equation}
We will prove the following one-dimensional analog of Proposition~\ref{prop:der-mart-multi-bis}.
\begin{proposition}
  \label{prop:der-mart-multi-bis2}
  There exists $\tmax\le -1$ and a constant $c > 0$ such that 
  \begin{equation}
    \label{26apr2326}
    u(t, x) \geq c x\e^{-x}
  \end{equation}
  for all $t \leq \tmax$ and $x\in [4 \log \abs{t}, \abs{t}/8]$.
\end{proposition}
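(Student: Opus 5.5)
Throughout, $u$ denotes the minimal positive solution of \eqref{eq:ancient}, the one-dimensional analogue of $h_T$ obtained as in Lemma~\ref{lem:ancientbis}. It is the increasing limit of solutions $u_S$ started from $u_S(-S,\anon) = 0$ with $u_S(t,0) = 1$. So it suffices to bound each $u_S$ from below by a subsolution, uniformly in $S$, and let $S \to \infty$.

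\emph{Step 1: remove the exponential.} Write $u = \e^{-x} v$. Using \eqref{eq:1D-barrier} and \eqref{eq:beta-dot}, with $s = \abs{t}$ we have $2 - \dot b^{\textnormal{1D}}_{s} = \frac{1}{2s} - \dot\beta_s$. Then $v$ satisfies
\begin{equation*}
  \partial_t v = \partial_x^2 v - \Big(\tfrac{1}{2s} - \dot\beta_s\Big)\partial_x v + \Big(\tfrac{1}{2s} - \dot\beta_s\Big) v - \e^{-x} v^2 .
\end{equation*}
Apart from the nonlinear term, this is a heat equation with a weak drift and a weak potential. The factor $\exp\int(\frac{1}{2r} - \dot\beta_r)\,\dn r$ records a relative growth of order $s^{1/2}\e^{-\beta_s}$ along the evolution; this is exactly the quantity entering the integral condition \eqref{eq:integral-condition}. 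The nonlinear term is harmless wherever $\e^{-x}v \ll 1$. This is why the statement only concerns $x \geq 4\log\abs{t}$: there $\e^{-x}v^2 \leq s^{-4}v^2$, which we will absorb into the potential once we have an a priori bound $v \lesssim s$ (this follows from $u \leq 1$ together with a crude supersolution).

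\emph{Step 2: boundary input.} Since $u(t,0) = 1$ and \eqref{eq:drift-sign} holds, a standard comparison (a compactly supported Fisher--KPP subsolution glued to the boundary) gives $u \geq c_0$ on $[0,1]$ for all $t \leq -1$. Hence $v \geq c_0$ at $x = 1$. We then compare $v$ on $\{x > 1\}$ with the solution $\ul v$ of the linear equation with boundary datum $c_0$ at $x = 1$. To make $\ul v$ a genuine subsolution we multiply it by a cutoff in $x$ that vanishes beyond $\abs{t}/4$ and subtract a small multiple of $\ul v^2$; both corrections are controlled by Step~1.

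\emph{Step 3: Duhamel representation and the key estimate.} By Duhamel's formula,
\begin{equation*}
  \ul v(t,x) \approx c_0 \int_{-\infty}^{t} \frac{x}{(t-t')^{3/2}}\,\e^{-x^2/4(t-t')}\,\Big(\frac{\abs{t'}}{\abs{t}}\Big)^{1/2}\e^{\beta_{\abs{t}} - \beta_{\abs{t'}}}\,\dn t' ,
\end{equation*}
up to drift corrections. Here the first factor is the boundary flux kernel for the Dirichlet heat equation, and the second is the potential factor. We need $\ul v(t,x) \gtrsim x$ uniformly for $x \in [4\log s, s/8]$.

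The main obstacle is this step. The contributions from $t'$ near $t$ only produce a bounded $v$, i.e.\ the bound $u \gtrsim \e^{-x}$ without the linear prefactor. To gain the factor $x$ we must accumulate input from all past dyadic scales $\abs{t'} \in [2^k s, 2^{k+1}s]$. The potential factor weights these scales by $\abs{t'}^{1/2}\e^{-\beta_{\abs{t'}}}$, while the kernel contributes $x\abs{t'}^{-3/2}$, so the sum over scales is comparable to $x\int_s^\infty \frac{\dn r}{r\,\e^{\beta_r}}$ (after tracking the normalization $s^{-1/2}\e^{\beta_s}$). The divergence in \eqref{eq:integral-condition} is exactly what makes this accumulated mass reach order $x$ for every $t \leq \tmax$.

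I would organize the argument as an iteration over dyadic time blocks. On each block, a subsolution of the form $\e^{-x}\,\kappa\, x\,(1 - x^2/(Ls))_+$ is propagated forward, and its amplitude $\kappa$ is fed by the divergent integral. The smallness of $\dot\beta$ from \eqref{eq:beta-dot} is what allows us to freeze $\beta$ on each block. Choosing $\tmax$ very negative, so that enough blocks have elapsed before time $t$, then yields \eqref{26apr2326}.
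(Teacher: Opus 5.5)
Your Step~3 heuristic is right and is the mechanism behind \eqref{eq:upper-0}--\eqref{eq:lower-0}. The gap is that the proposal never controls the nonlinear term, and both devices you offer for it fail.

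First, the a priori bound $v\lesssim s$ is false. Since $u_S\nearrow u$ and $u$ is the hitting probability of the curve $b^{\textnormal{1D}}$, under \eqref{eq:integral-condition} $u\equiv 1$ (Hu's test in $d=1$, which this proposition helps reprove). Hence $v=\e^{x}\geq s^{4}$ on $x\geq 4\log s$, and no supersolution can do better. Relatedly, your Duhamel integral from $-\infty$ is infinite precisely because of \eqref{eq:integral-condition}: its tail is $x|t|^{-1/2}\e^{\beta_{|t|}}\int^{\infty}\frac{\dn r}{r\exp\beta_r}$.

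So the nonlinearity must be absorbed on the subsolution side, and that requires an \emph{upper} bound on the subsolution. Neither a divergent integral nor an amplitude ``fed by the divergent integral'' provides one. The missing idea is the truncation in Lemma~\ref{lem-26apr2002}. For each target time $\f{t}$, start the linear problem at a finite time $-T(\f{t})$, tuned by \eqref{eq:T-choice} so that the accumulated mass at $\f{t}$ is exactly of order one. This gives $z(\f{t},x)\geq x$ for $x\leq|\f{t}|/2$, and also $z\leq 16x$ and $|\partial_x z|\leq 16$ for all $t\leq\f{t}$. The upper bounds give $\cubar\e^{-x}(2^{-4}z+1)\leq 2$, so the multiplier $a_z$ absorbs the nonlinearity, and they give $\partial_x\ubar{u}\leq 0$. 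Also, reaching $x\sim|t|/8$ needs input from $|t'|\gtrsim t^2$, hence $\mathcal{I}(T)-\mathcal{I}(t^2)$ in \eqref{eq:lower-0}. A block profile supported in $x\lesssim\sqrt{L|t|}$ with $L$ fixed cannot cover that range.

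Second, even with such a cap, you cannot run the comparison on $\{x>1\}$ from the input $u\geq c_0$ on $[0,1]$. For $x=\mathcal{O}(1)$ the term $\e^{-x}\underline{v}^2$ is of order one, while the potential $\tfrac{1}{2s}-\dot\beta_s$ is $\mathcal{O}(s^{-1})$. A time-dependent multiplier $a$ would need $\dot a\leq -c\,a^2$ near $x=1$, so $a$ decays like the reciprocal of the elapsed time. That destroys the accumulation you need over times of order $t^2$. Subtracting $\epsilon\underline{v}^2$ goes the wrong way. If $L\underline{v}=0$ for the linear operator $L$, then $L(\underline{v}-\epsilon\underline{v}^2)=2\epsilon(\partial_x\underline{v})^2-\epsilon(\tfrac{1}{2s}-\dot\beta_s)\underline{v}^2$, an additional positive residual.

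The paper bridges the layer $x\in[0,2\log|t|]$ with a genuinely nonlinear step, Lemma~\ref{lem:exp-lower}. The minimal steady state of the drift-$2$ problem is $\e^{-x}v$, with $v''=\e^{-x}v^2$ convex and $v(\infty)>0$. This gives $u\geq\cexp\e^{-x}$ out to $|t|/2$, hence $v\geq\cexp$ at $x=2\log|t|$. Only then does the linear analysis begin, with the boundary moved to $x=2\log|t|$ via $v(t,x)=w(t,x-2\log|t|)$. There the nonlinearity carries the time-integrable coefficient $|t|^{-2}$ and is absorbed by $\dot a_z=-2|t|^{-2}a_z^2$.
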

For our application, the subsolutions underlying Proposition~\ref{prop:der-mart-multi-bis2} are more valuable than the result itself, but the proposition serves as a useful guidepost.
Moreover, in combination with Proposition~\ref{prop:hitting}, it yields a new proof (in the BBM setting) of the divergent-integral direction of Hu's 1D integral test~\cite[Theorem~1.1]{Hu15}.
\medskip

The first step in the proof of Proposition~\ref{prop:der-mart-multi-bis2} is establishing the exponential decay in \eqref{26apr2326} without the (quite important) linear prefactor.
We express this through a subsolution satisfying certain properties, which we use later in the proof of the multi-dimensional Proposition~\ref{prop:der-mart-multi-bis}.
\begin{lemma}
  \label{lem:exp-lower}
  There exists a constant $\cexp \in (0, 1]$ such that for each $\f{t} \leq -1$, there exists a function $\uconst(t,x; \f{t}) \geq 0$ satisfying the following properties:
  \begin{enumerate}[label = \textnormal{(\roman*)},itemsep = 2pt]
  \item $\uconst = 0$ where $t \ll -1$ or $x \geq \abs{t}$;

  \item
    \label{item:exp-sub}
    $\uconst$ is a subsolution of \eqref{eq:ancient};

  \item $\partial_x \mr{u} \leq 0$; and

  \item for all $x \in [0, \abs{\f{t}}/2]$,
    \begin{equation}
      \label{26apr1320_1}
      \uconst(\f{t},x; \f{t}) \geq \cexp \e^{-x}.
    \end{equation}
  \end{enumerate}
\end{lemma}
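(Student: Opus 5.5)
We will build $\uconst$ explicitly from the linearized problem in an exponentially weighted frame, cut it off, and check the subsolution inequality directly. The natural building block comes from the substitution $u = \e^{-x} v$. Since $\dot b^{\textnormal{1D}}_{\abs{t}} = 2 - \frac{1}{2\abs{t}} + \dot\beta_{\abs{t}}$ (with $\dot\beta$ evaluated at $\abs{t}$), the linear part of \eqref{eq:ancient} becomes
\begin{equation*}
  \partial_t v = \partial_x^2 v + (\dot b^{\textnormal{1D}}_{\abs{t}} - 2)\partial_x v + (1 - \dot b^{\textnormal{1D}}_{\abs{t}} + 1)v = \partial_x^2 v - \delta_t \partial_x v + \delta_t v,
\end{equation*}
where $\delta_t \coloneqq 2 - \dot b^{\textnormal{1D}}_{\abs{t}} = \frac{1}{2\abs{t}} + \smallO(\abs{t}^{-1}) > 0$ by \eqref{eq:drift-sign}. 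The zeroth-order term has the favorable sign, and the drift is $\m{O}(\abs{t}^{-1})$. Thus $v$ only needs to be a subsolution of an almost-pure heat equation. We will use the Dirichlet condition $u(t,0)=1$ only through the inequality $\uconst \le 1$ at $x=0$.

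\emph{Construction.} Fix $\f t \le -1$ and start the subsolution at time $\f t_0 \coloneqq 2\f t$, so that property (i) holds for $t<2\f t$. We take
\begin{equation*}
  \uconst(t,x) = \cexp\, \e^{-x}\, \phi\Big(\frac{x}{\abs{t}}\Big)\, \psi(t),
\end{equation*}
where $\phi$ is a smooth nonincreasing cutoff equal to $1$ on $[0,1/2]$ and $0$ on $[1,\infty)$, and $\psi$ is a time ramp. The ramp satisfies $\psi = 0$ for $t \le 2\f t$, $\psi = 1$ for $t \ge \f t$, and $\dot\psi\ge0$.

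Since $\phi$ is supported where $x < \abs{t}$, property (i) holds. For (iii), $\partial_x \uconst = \cexp\psi \e^{-x}(-\phi + \abs{t}^{-1}\phi') \le 0$, because $\phi \ge 0$ and $\phi'\le 0$. Property (iv) holds with the same $\cexp$, since $\phi = 1$ on $[0,\abs{\f t}/2]$ and $\psi(\f t)=1$. At $x=0$ we have $\uconst \le \cexp \le 1$, so $\uconst$ lies below the boundary data.

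\emph{Subsolution check.} Write $w = \phi(x/\abs{t})\psi(t)$, so that $\uconst = \cexp\e^{-x} w$. The operator computed above gives
\begin{equation*}
  \mathcal{L}\uconst \coloneqq \partial_t\uconst - \partial_x^2\uconst - \dot b\,\partial_x\uconst - \uconst + \uconst^2
  = \cexp\e^{-x}\big[\partial_t w - \partial_x^2 w + \delta_t\partial_x w - \delta_t w + \cexp\e^{-x} w^2\big].
\end{equation*}
We need this to be $\le 0$. The term $-\delta_t w \approx -\frac{w}{2\abs{t}}$ is the only negative contribution available. The remaining terms are as follows.
\begin{itemize}
  \item $\partial_x^2 w = \abs{t}^{-2}\phi''\psi$ and $\delta_t \partial_x w = \m{O}(\abs{t}^{-2})$.
  \item $\partial_t w = \phi'(x/\abs{t})\,x\abs{t}^{-2}\psi + \phi\dot\psi$. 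The first part is $\le 0$ for $t<0$, since $\phi'\le0$ and $\abs{t}$ is decreasing in $t$.
  \item The quadratic term $\cexp\e^{-x}w^2$ is only problematic for bounded $x$.
\end{itemize}
The obstacles are therefore (a) $\dot\psi>0$, (b) the $\abs{t}^{-2}\phi''$ term where $\phi$ is small, and (c) the nonlinearity near $x=0$.

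\emph{Handling the obstacles.} For (b), we choose $\phi$ so that $\phi$ dominates $\abs{\phi''}$ near the edge of the cutoff. A concrete choice is $\phi = \tilde\phi^3$ with $\tilde\phi$ vanishing linearly at $1$, combined with the negative $\partial_t$ term. Alternatively, we replace $\phi$ by a principal Dirichlet eigenfunction profile $\cos$-type on $[0,\abs{t}]$ and absorb the eigenvalue $\asymp \abs{t}^{-2}$ into $\delta_t \asymp \abs{t}^{-1}$.

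For (a) and (c), we avoid a time ramp on a multiplicative scale. Instead we use the true nonlinearity near the boundary: replace $\e^{-x}$ by a compactly modified front profile. Concretely, we use $\max\{\cexp\e^{-x}w,\ \underline u\}$, where $\underline u$ is a small stationary subsolution near $x=0$. More simply, since $\delta_t\abs{t}\to\frac12$, we can let the cutoff support grow linearly in time from $x\in[0,\abs{t}]$ starting at $2\f t$, and avoid $\psi$ altogether by gluing to $0$ at a moving right edge. The nonlinearity is handled by choosing $\cexp$ small and using a factor $(1-\e^{-\lambda x})$ that vanishes at $x=0$. The positive curvature of $1-\e^{-\lambda x}$ gives $-\partial_x^2 \ge c\lambda^2 \e^{-\lambda x}$, which dominates $\cexp \e^{-x}$ when $\lambda<1$ is fixed and $\cexp$ is small. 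Property (iv) survives at $x=0$ only if we then lower-bound differently, so we expect to handle the two regions separately. In $x\ge1$ we use the $(1-\e^{-\lambda x})$ factor. Near $x=0$ we use that $\uconst$ is a subsolution of a problem with boundary value $1$, with the comparison giving the bound there.

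We expect the delicate bookkeeping between the $\abs{t}^{-1}$ gain $\delta_t$ and the $\abs{t}^{-2}$ cutoff losses to be routine. The main obstacle is making the start-up in time and the boundary nonlinearity compatible with monotonicity (iii) without losing the uniform constant $\cexp$.
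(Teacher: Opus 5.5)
Your construction fails at the start-up, and the obstruction is not bookkeeping: starting the subsolution at $t=2\f{t}$ makes (iv) impossible. Any subsolution that vanishes at $t=2\f{t}$ and is at most $1$ at $x=0$ lies below the solution of \eqref{eq:ancient} with zero data at $2\f{t}$. In your variable $v=\e^{x}u$, after dropping $-u^2$, that solution is dominated by the solution of $\partial_t v=\partial_x^2v-\delta_t\partial_xv+\delta_tv$ with $v(t,0)=1$. Since $\int_{2\f{t}}^{\f{t}}\delta_t\,\d t\approx\tfrac12\log 2$, the drift and growth only contribute bounded shifts and factors. So $v(\f{t},\cdot)$ is a heat-equation boundary layer of width $\abs{\f{t}}^{1/2}$, and it is exponentially small in $\abs{\f{t}}$ at $x=\abs{\f{t}}/2$. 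Mass enters only through the Dirichlet condition and spreads diffusively. You must therefore start at some $-T(\f{t})$ with $T\gtrsim\f{t}^{2}$, which (i) permits.

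The same estimate kills the ramp: absorbing $\phi\dot\psi$ into $\delta_t\phi\psi$ requires $\dot\psi\le\delta_t\psi$, which with $\psi(2\f{t})=0$ forces $\psi\equiv0$. Your proposed remedies do not repair this. A moving right edge cannot outrun diffusion, for the same reason. The factor $1-\e^{-\lambda x}$ is concave, so $-\partial_x^2(1-\e^{-\lambda x})=+\lambda^2\e^{-\lambda x}$ enters your $\mathcal{L}$ with the \emph{unfavorable} sign. Moreover, a factor vanishing at $x=0$ violates (iv) at $x=0$, and (iv) is a property of $\uconst$ itself, not of the solution it is compared with.

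The missing idea is that the nonlinearity near the boundary is paid for by \emph{convexity} of the tilted profile, not by $\delta_t$. Your claim that $-\delta_t w$ is the only favorable term overlooks this. The paper in fact discards $\delta_t$ entirely. It works with drift exactly $2$, solving $\partial_t\phi=\partial_x^2\phi+2\partial_x\phi+\phi-\phi^2$ with $\phi(t,0)=1$ and zero data at $-T$. Only at the end does it use $\partial_x\uconst\le0$ and $\dot{b}_{\abs{t}}^{\textnormal{1D}}<2$ to pass to \eqref{eq:ancient}. This $\phi$ increases to the minimal steady state $\e^{-x}v$, where $v''=\e^{-x}v^2$ and $v(0)=1$. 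The function $v$ is convex and decreasing, and an integration argument shows $v(\infty)>0$, so the nonlinear boundary layer costs only a constant factor. Taking $T=T(\f{t})$ large then gives $\phi(\f{t},x)\ge\tfrac{c}{2}\e^{-x}$ for $x\le\abs{\f{t}}$. Moreover $V=\e^{x}\phi$ solves $\partial_tV=\partial_x^2V-\e^{-x}V^2$ with $\partial_tV>0$, hence is convex and decreasing in $x$. Multiplying by the caloric cutoff $m=1-\e^{x+t}$, which vanishes at $x=\abs{t}$, yields a subsolution, because the cross term $-2\partial_xm\,\partial_xV$ is nonpositive and $m\le1$. Your handling of the cutoff losses in (b) is plausible. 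But without a boundary-fed convex nonlinear profile and a start time of order $-\f{t}^{2}$ or earlier, there is no route to (iv).
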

It follows that any positive solution of \eqref{eq:ancient} is bounded from below by $\cexp \e^{-x}$ in an appropriate region.
This is progress, but not the final bound we seek.
\begin{proof}
  Given $T \gg 1,$ let $\phi$ solve the initial-boundary value problem
  \begin{equation}
    \label{eq:uconst}
    \begin{cases}
      \partial_t \phi = \partial_x^2 \phi + 2 \partial_x \phi + \phi - \phi^2 & \text{for }x>0,~t>-T,\\
      \phi(t, 0) = 1 & \text{for } t > -T,\\
      \phi(-T,x) = 0& \text{for } x > 0.
    \end{cases}
  \end{equation}
  The initial condition $0$ is a subsolution, so $\phi$ is increasing in $t$.
  By time translation symmetry, $\phi$ thus converges locally uniformly as $t+T \to \infty$ to the minimal positive solution $\bar\phi$ of the steady-state ODE
  \begin{equation*}
    \bar\phi'' + 2 \bar\phi' + \bar\phi - \bar\phi^2 = 0 \enspace\text{in } \R_+ \text{ with } \enspace \bar\phi(0) = 1.
  \end{equation*}

  In addition, as $\e^{-x}$ is a supersolution of \eqref{eq:uconst}, we have $\phi \leq \e^{-x}$.
  It follows that 
  \begin{equation}
    \label{26apr1304}
    \bar\phi(x)\leq \e^{-x}
  \end{equation}
  as well.
  Writing $\bar\phi(x)=\e^{-x} v$, the function $v$ solves the ODE
  \begin{equation}
    \label{26apr1310}
    v'' = \e^{-x}v^2 \enspace\text{in } \R_+ \text{ with } \enspace v(0) = 1.
  \end{equation}
  We see from \eqref{26apr1304} that 
  \begin{equation}
    \label{26apr1314}
    v \leq 1,
  \end{equation}
  so $v$ is convex and bounded.
  Thus, the limit $v(\infty) \geq 0$ exists, $v'(\infty) = 0$, and $v'(x) < 0$ for all $x > 0$.
  We claim that 
  \begin{equation*}
    v(\infty) > 0.
  \end{equation*} 
  Suppose, for the sake of contradiction, that $v(\infty) = 0$.
  Integrating \eqref{26apr1310}, we find
  \begin{equation}
    \label{26apr1312}
    -v'(x) = \int_x^\infty \e^{-y}v^2(y) \d y \leq v^2(x) \int_x^\infty \e^{-y} \d y = \e^{-x} v^2(x).
  \end{equation}
  By the same reasoning, integrating \eqref{26apr1312} and using \eqref{26apr1314}, we see that if $v(\infty) = 0$, then 
  \begin{equation*}
    0 \leq v(x) \leq \e^{-x} v^2(x) \leq \e^{-x}v(x) \ForAll x > 0.
  \end{equation*}
  It follows that $v\equiv 0$, violating the boundary condition $v(0) = 1$.
  Through this contradiction, we see that there exists some constant $c > 0$ so that 
  $v(x) \geq c$ for all~$x>0$.
  
  Now, fix $\f{t} \leq -1$.
  Because 
  \begin{equation*}
    \phi(t+T,x) \to \e^{-x}v (x)\geq c\e^{-x} \As t+T\to+\infty,
  \end{equation*}
  locally uniformly in $t$ and $x$, 
  we can take $T \gg 1$ so that 
  \begin{equation}
    \label{26apr1318}
    \phi(\f{t},x) \geq (c/2) \e^{-x} \ForAll x \leq \abs{\f{t}}.
  \end{equation}
  It just remains to cut off $\phi$ where $x \geq \abs{t}$, which will prove useful in multi-dimensional applications.

  To this end, we note that $V(t,x) \coloneqq \e^x \phi(t,x) \leq 1$ satisfies 
  \begin{equation}
    \label{26apr1316}
    \partial_t V = \partial_x^2 V - \e^{-x} V^2, \quad x>0,\enspace t>-T.
  \end{equation}
  Since $\partial_t\phi>0$, we also have $\partial_t V > 0$.
  Therefore
  \begin{equation*}
    \partial_x^2 V = \partial_t V + \e^{-x} V^2 \geq 0.
  \end{equation*}
  Thus $V(t,x)$ is likewise convex in space and bounded, which implies 
  \begin{equation*}
    \partial_x V \leq 0.
  \end{equation*}
  Now let 
  \begin{equation*}
    m(t, x) \coloneqq 1 - \e^{x + t}\hbox{ for } t<0,x>0
  \end{equation*}
  This function is a solution to the heat equation that also satisfies $m(t,-t)=0$.
  Then, as $0\le m(t,x)\le 1$ for $x \leq \abs{t}$, we have
  \begin{equation*}
    (\partial_t - \partial_x^2)(mV) = -2 \partial_x m \partial_x V - m\e^{-x}V^2 
    \leq - m^2 \e^{-x} V^2 \For 0<x<|t|.
  \end{equation*}
  Thus, $mV$ is a subsolution of \eqref{26apr1316}.
  It follows that $m\phi$ is a subsolution of \eqref{eq:uconst} where $t \geq -T$ and $0 < x \leq \abs{t}$.
  We may extend $m\phi$ by zero to the region where $t \leq -T$ or $x \geq \abs{t}$, leading to a subsolution in the whole region $t>-T$ and $x>0$.
  Moreover, we have
  \begin{equation*}
    m(t,x) \geq 1 - \e^{-\abs{t}/2} \geq 1 - \e^{-1/2} \quad \text{where }x \leq \abs{t}/2.
  \end{equation*}
  Recalling also \eqref{26apr1318}, we see that
  \begin{equation}
    \label{26apr1321}
    m(\f{t},x)\phi(\f{t},x) \geq \cexp \e^{-x} \quad \text{where } x \leq \abs{\f{t}}/2,
  \end{equation}
  with
  \begin{equation*}
    \cexp \coloneqq \min\{c(1 - \e^{-1/2})/2, 1\} > 0.
  \end{equation*}
  We let 
  \begin{equation*}
    \mr{u}(t,x) \coloneqq m(t,x)\phi(t,x),
  \end{equation*}
  noting that this depends on $\f{t}$ through the choice of $T$ ensuring \eqref{26apr1318}.
  We extend $\mr{u}$ by zero before $t = -T$.
  
  The comparison principle implies that $\partial_x \phi(t,x) \leq 0$.
  Since $m(t,x)$ is also decreasing in $x$, the same is true for the product: $\partial_x \mr{u} \leq 0$.
  By \eqref{eq:drift-sign}, $\mr{u}$ is then a subsolution of \eqref{eq:ancient}.
  Moreover, $\mr{u}$ satisfies \eqref{26apr1320_1} because of \eqref{26apr1321}.
\end{proof}
Applying Lemma~\ref{lem:exp-lower} at $x = 2 \log \abs{t}$, we have
\begin{equation}
  \label{26apr1502}
  u(t,2\log|t|)\geq \cexp \abs{t}^{-2}.
\end{equation}
We now study \eqref{eq:ancient} to the right of $x = 2 \log \abs{t}$.
This is a convenient choice because after an exponential tilt, the nonlinearity will have a prefactor $\abs{t}^{-2}$, which is integrable in time.
This will allow us to easily absorb the nonlinearity.
The following lemma introduces the factor of $x$ that was missing in the lower bound \eqref{26apr1320_1} but is needed for Proposition~\ref{prop:der-mart-multi-bis2}.
It makes use of the constant $\cexp > 0$ from Lemma~\ref{lem:exp-lower}.
\begin{lemma}
  \label{lem:der-mart-lower}
  For all $\cubar \in (0, 1]$, there exists $\tmax \leq -1$ such that the following holds.
  For all $\f{t} \leq \tmax$, there exists $\ubar{T}(\f{t}) > \abs{\f{t}}$ and a function $\ubar{u}(t,x; \f{t}) \geq 0$ defined where $t \in [-\ubar{T}(\f{t}), \f{t}]$ and $x \geq 2 \log \abs{t}$ satisfying the following properties:
  \begin{enumerate}[label = \textnormal{(\roman*)},itemsep = 2pt]
  \item $\ubar{u} = 0$ where $x \geq \abs{t}$;

  \item
    \label{item:der-mart-upper}
    $\ubar{u} \leq \cubar \e^{-x}$ where $t = -\ubar{T}$ or $x = 2 \log \abs{t}$;   

  \item $\ubar{u}$ is a subsolution of \eqref{eq:ancient};

  \item $\partial_x \ubar{u} \leq 0$; and

  \item for all $x \in [4 \log \abs{\f{t}}, \abs{\f{t}}/4]$,
    \begin{equation}
      \label{26apr1320}
      \ubar{u}(\f{t}, x; \f{t}) \geq 2^{-8} \cubar x \e^{-x}.
    \end{equation}
  \end{enumerate}
\end{lemma}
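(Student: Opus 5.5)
We propose to construct $\ubar u$ by an exponential tilt followed by an explicit heat-equation subsolution on a shrinking interval. Write $\ubar u = \e^{-x} w$. Substituting into \eqref{eq:ancient} with $\dot b^{\textnormal{1D}}_{\abs t} = 2 - \frac{1}{2\abs t} + \dot\beta_{\abs t}$ (using $\frac{\dn}{\dn t} b_{\abs t} = -\dot b_{\abs t}$ appropriately, so the drift is $2-\delta_t$ with $\delta_t = \frac{1}{2\abs t} - \dot\beta_{\abs t}$), we get
\begin{equation*}
  \partial_t w = \partial_x^2 w - \delta_t(\partial_x w - w) - \e^{-x} w^2 ,
\end{equation*}
up to sign conventions. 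Here $\abs{\delta_t} \lesssim \abs t^{-1}$ by \eqref{eq:beta-dot}, and $\e^{-x} \leq \abs t^{-2}$ on the region $x \geq 2\log\abs t$.

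The natural candidate for $w$ is a Dirichlet heat kernel profile on $(2\log\abs t, \abs t)$ that produces the linear growth in $x$. Take $\ubar T = \lambda \abs{\f t}$ for a large constant $\lambda$. Set
\begin{equation*}
  w(t,x) = \cubar\, g(t)\, \psi(t,x),
\end{equation*}
where $\psi$ is a positive solution of the heat equation on the moving interval $\ell(t) := 2\log\abs t < x < \abs t$, vanishing at $x = \abs t$. At the left endpoint, $\psi$ equals a small datum. We use the explicit choice
\begin{equation*}
  \psi = \frac{x - \ell(t)}{\sqrt{s}}\, \e^{-(x-\ell)^2/(4s)} \times(\text{cutoff}),\qquad s = t + \ubar T,
\end{equation*}
i.e.\ a dipole solution emanating from the left boundary, and we start it at $s_0 \asymp \abs{\f t}$ with small initial data. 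At $s \asymp \abs{\f t}$ the dipole equals roughly $x/\sqrt{s}\cdot$const for $x \leq \sqrt{s}$. A simpler alternative is the linear function $\psi = (x-\ell(t))(1 - x/\abs t)$ damped by a time factor, but this must vanish at $t = -\ubar T$. So I would instead build $w$ by comparison: let $w_0$ solve the linear problem (drift $\delta_t$ terms included) on $x\in(\ell(t),\abs t)$ with $w_0 = \cubar$ at $x=\ell(t)$, zero at $x=\abs t$, and $w_0(-\ubar T,\cdot)=0$. This satisfies (i), (ii) and, by the maximum principle, $0 \le w_0\le \cubar$.

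Two corrections are needed. First, to absorb the nonlinearity $\e^{-x}w^2 \leq \abs t^{-2}\cubar w$, multiply by $g(t) = \exp\big(-\int_{-\ubar T}^t C\abs{\tau}^{-2}\dn\tau\big) \geq 1/2$ for $\abs{\f t}$ large. The $\delta_t w$ term, of size $\abs t^{-1}$, is not integrable over a time span of length $\asymp\abs{\f t}$, but $\int \abs t^{-1} \lesssim \log\lambda$ is bounded, so it costs only a constant factor; choosing $\lambda$ fixed (say $\lambda = 2$) keeps this factor above $1/2$. The $\delta_t\partial_x w$ term has a bounded coefficient and is a drift that can be included in the linear problem itself. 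Second, monotonicity (iv): $\partial_x(\e^{-x}w) = \e^{-x}(\partial_x w - w)\le 0$ needs $\partial_x w\le w$. This holds since $w_0$ decreases in $x$, by comparison for the linear problem with data decreasing in $x$ and a boundary moving in a suitable direction, which can be checked via the equation for $\partial_x w_0$. The subsolution property (iii) at the left boundary is only needed in the interior, since the lemma prescribes boundary values rather than the equation there.

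The main obstacle is the quantitative lower bound (v): $w(\f t,x)\ge 2^{-8}x$ for $x\in[4\log\abs{\f t},\abs{\f t}/4]$. After running for time $\asymp \abs{\f t}$, the linear problem on an interval of length $\asymp\abs{\f t}$ with boundary value $\cubar$ gives a profile comparable to $\cubar(1 - (x-\ell)/\abs t)$, bounded rather than linearly growing, since the Dirichlet datum is a constant. Hence the datum must instead be tied to $\cubar\e^{-x}$ growth in the tilted frame. Reexamining (ii), the bound is $\ubar u\le\cubar\e^{-x}$ at $x=\ell$, i.e.\ $w\le\cubar$ there, while $w$ may be as large as $\cubar x$ inside. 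So the linear growth must come from the drift/tilt mismatch, and the correct route is to choose $\psi$ as the explicit dipole $\cubar (x-\ell)\sqrt{s_1/s}\,\e^{-(x-\ell)^2/4s}$ started at $s_1\asymp \log^2\abs{\f t}$. Its initial size at the scale where it is not below $\cubar$ is controlled by the Lemma~\ref{lem:exp-lower} input \eqref{26apr1502}. The delicate bookkeeping is the left-boundary inequality $w\le\cubar$ near $x=\ell$ while retaining $w\gtrsim\cubar x$ at time $\f t$ up to $x\sim\abs{\f t}/4$, which forces $s\asymp\abs{\f t}^2$, i.e.\ $\ubar T\asymp \abs{\f t}^2$. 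It also forces re-verifying that $\int\abs t^{-1}$ over that span (now $\asymp\log\abs{\f t}$) is handled by the $-\frac1{2}\log t$ correction in $b^{\textnormal{1D}}$, which is exactly what makes the $\sqrt{s_1/s}$ decay compensate. I expect this cancellation to be the crux.
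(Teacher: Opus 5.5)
Your proposal has a genuine gap: it never uses the divergence hypothesis \eqref{eq:integral-condition}, and the lemma cannot hold without it. With $\cubar=\cexp$, this lemma plus Lemma~\ref{lem:exp-lower} gives Proposition~\ref{prop:der-mart-multi-bis2}. The argument of Proposition~\ref{prop:hitting}, run with the 1D derivative martingale, would then make 1D BBM cross $b^{\textnormal{1D}}$ after every time $T$ almost surely. That contradicts the convergent-integral half of the test (Proposition~\ref{prop:necessary} with $d=1$, or \cite{Hu15}). So any construction that uses only $\abs{\dot\beta_t}\ll t^{-1}$, as yours does, must fail somewhere, and your own analysis shows where. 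In the tilted variable, (ii) forces $w\le\cubar$ both at $t=-\ubar{T}$ and on $x=2\log\abs{t}$. The only term that can push $w$ above $\cubar$ is the zeroth-order term $\delta_t w$, with $\delta_t=\frac{1}{2\abs{t}}-\dot\beta_{\abs{t}}>0$. You treat it as a cost to be absorbed, but it has the favorable sign for a subsolution. Without it, the maximum principle gives $w\le\cubar$ for all time, so no heat or dipole profile can produce $w\gtrsim\cubar x$. A dipole of slope $\cubar$ started at $s_1\asymp\log^2\abs{\f{t}}$ already exceeds $\cubar$ and violates (ii). The hoped-for ``cancellation'' with the $-\frac12\log t$ correction is not an argument.

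The missing idea is a quantitative analysis of that source over a very long horizon. The paper writes $\delta_t=\dot\lambda/\lambda$ with $\lambda(t)=\abs{t}^{-1/2}\e^{\beta_{\abs{t}}}$ and takes the subsolution $\cubar a_z(t)(2^{-4}z+1)$. Here $z$ solves the linear problem with source $\delta_t(z+\Phi)$ and zero data at $x=0$ and at $t=-T$. An energy estimate in self-similar variables shows that the drift $\kappa=\m{O}(\abs{t}^{-1})$ changes each Dirichlet profile $\Phi(x/\sqrt{t-s+1})$ by at most a factor $2$. Duhamel's formula then sandwiches $z(t,x)$ between $\frac14 x\e^{\beta_{\abs{t}}}\abs{t}^{-1/2}[\m{I}(T)-\m{I}(t^2)]$ and $2x\e^{\beta_{\abs{t}}}\abs{t}^{-1/2}\m{I}(T)$. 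Because each seeded profile leaks through the boundary like $(t-s)^{-1/2}$, the slope at $\f{t}$ is of order one only if $\m{I}(T)\gtrsim\abs{\f{t}}^{1/2}\e^{-\beta_{\abs{\f{t}}}}$. The paper picks $T(\f{t})$ by \eqref{eq:T-choice}, which is possible precisely because $\m{I}(\infty)=\infty$; for $\beta_t=\log\log t$ this makes $\ubar{T}$ doubly exponential in $\abs{\f{t}}^{1/2}/\log\abs{\f{t}}$. Your horizons $\ubar{T}\asymp\abs{\f{t}}$ or $\abs{\f{t}}^2$ are too short for any construction. The supersolution $\cubar(1+\Phi(1)^{-1}z(t,x+1))$ from the proof of Lemma~\ref{lem:upper-heat}, together with \eqref{eq:upper-0}, shows that any $\ubar{u}$ obeying (ii)--(iii) then satisfies $\ubar{u}(\f{t},x)\lesssim\cubar\e^{-x}(1+x\abs{\f{t}}^{-1/2+o(1)})$. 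This is incompatible with (v) near $x=\abs{\f{t}}/4$. Your argument for (iv) also breaks once $w$ grows linearly, since $w$ is then not decreasing. The paper instead uses $\abs{\partial_x z}\le 16$ with the factor $2^{-4}$ to get $\partial_x w\le w$, and the bound $\abs{\partial_x z}\le z/8$ for large $x$ to cut off at $x\ge\abs{t}$.
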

Returning to the solution $u$ of \eqref{eq:ancient}, the boundary estimate \eqref{26apr1502}, Lemma \ref{lem:der-mart-lower}, and the comparison principle imply that
\begin{equation*}
  u(t,x) \geq 2^{-8} \cubar x \e^{-x} \ForAll x \in [4 \log \abs{t}, \abs{t}/4] \hbox{ and $t\le \tmax$}.
\end{equation*}
This is \eqref{26apr2326}, so Proposition~\ref{prop:der-mart-multi-bis2} reduces to Lemma~\ref{lem:der-mart-lower}.
This lemma will also play a key role in the proof of Proposition~\ref{prop:der-mart-multi-bis} below.
\begin{proof}
  Let $u$ be a nonnegative solution of
  \begin{equation*}
    \partial_t u = \partial_x^2 u + \dot{b}_{\abs{t}}^{\textnormal{1D}} \partial_x u + u - u^2
  \end{equation*}
  that satisfies a version of \eqref{26apr1502}:
  \begin{equation*}
    u(t,2\log |t|)\ge \cubar |t|^{-2}.
  \end{equation*}
  We write
  \begin{equation}
    \label{eq:1D-transform}
    u(t,x) \coloneqq \e^{-x} v(t,x) \And v(t, x) \coloneqq w\bigl(t, x - 2 \log \abs{t}\bigr).
  \end{equation}
  Then the function $w$ satisfies
  \begin{equation}
    \label{eq:shift}
    \begin{aligned}
      &\partial_t w = \partial_x^2 w + \Big[\frac{1}{2\abs{t}} - \dot{\beta}_{\abs{t}}\Big] w 
        - \kappa(t) \partial_x w - \abs{t}^{-2} \e^{-x} w^2,\quad x>0,\\
      & w(t, 0) \geq \cubar
    \end{aligned}
  \end{equation}
  with
  \begin{equation}
    \label{26apr1602}
    \kappa(t) \coloneqq \frac{5}{2 \abs{t}} - \dot\beta_{\abs{t}} = \m{O}(\abs{t}^{-1}).
  \end{equation}
  Using \eqref{eq:beta-dot}, we may assume that $t \leq \tmax$ is sufficiently negative that
  \begin{equation}
    \label{eq:drift-signs}
    \frac{1}{2 \abs{t}} - \dot\beta_{\abs{t}} > 0 \enspace \text{and hence} \enspace \kappa > 0.
  \end{equation}
  We will construct a sub-solution $\ubar{w}$ of \eqref{eq:shift} of the form 
  \begin{equation}
    \label{26apr2016}
    \ubar{w}=a(t)(z + 1),
  \end{equation}
  for a certain multiplier~$a(t) \leq \cubar$ and a solution $z(t,x)$ of a linear problem.
  If we solve the linearization of \eqref{eq:shift} using $\cubar(\ti z + 1)$, we find that
  \begin{equation}
    \label{eq:z-trial}
    \begin{aligned}
      &\partial_t \ti z = \partial_x^2 \ti z + \Big[\frac{1}{2\abs{t}} 
        - \dot{\beta}_{\abs{t}}\Big](\ti z + 1) - \kappa(t) \partial_x \ti z, \quad x>0,\\
      & \ti z|_{x = 0} \geq 0.
    \end{aligned}
  \end{equation}
  We construct a subsolution by reducing $\ti z$ in several ways.
  We set $\ti z = 0$ where $x = 0$ and at $t = -T$ for $T \gg 1$.
  We also reduce its bulk forcing.
  Let us set
  \begin{equation}
    \label{eq:erf}
    \Phi(x) \coloneqq \pi^{-1/2} \int_0^x \e^{-y^2/4} \d y,
  \end{equation}
  which satisfies $\Phi(0) = 0$ and $\Phi \nearrow 1$ as $x \to \infty$.
  We replace $1$ by $\Phi$ in \eqref{eq:z-trial}, which has the advantage that it smoothly achieves the boundary data.
  After these changes, we are left with the following problem:
  \begin{equation}
    \label{eq:finite-time}
    \begin{cases}
      \partial_t z = \partial_x^2 z + \Big[\frac{1}{2\abs{t}} 
      - \dot{\beta}_{\abs{t}}\Big][z + \Phi(x)] 
      - \kappa(t) \partial_x z & \text{for } t \in (-T,0), x > 0,\\
      z(t,0) = 0 & \text{for } t \in (-T,0),\\
      z(-T,x) = 0 & \text{for } x > 0.
    \end{cases}
  \end{equation}
  This is a subsolution for \eqref{eq:z-trial}.
  Our next goal is the following lemma.
  \begin{lemma}
    \label{lem-26apr2002}
    Suppose there exists $C>0$ so that 
    \begin{equation}
      \label{26may516}
      \kappa(t)\le \frac{C}{|t|} \ForAll t \leq -1.
    \end{equation}
    Then there exists $\tmax \leq -1$ such that for all $\f{t} \leq \tmax$, there exists $T=T(\f{t}) > |\f{t}|$ such that the solution $z(t,x)$ of \eqref{eq:finite-time} satisfies
    \begin{equation}
      \label{eq:lowerbis}
      z(\f{t}, x) \geq x \ForAll x \leq \frac{\abs{\f{t}}}{2}
    \end{equation}
    and
    \begin{equation}
      \label{eq:z-derivbis}
      \abs{\partial_x z(t,x)} \leq 16 \ForAll t<\f{t}, x > 0.
    \end{equation}
  \end{lemma}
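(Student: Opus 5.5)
The plan is to remove the zeroth-order growth with an integrating factor and then reduce both claims to estimates for a drifted Dirichlet heat equation with a small source. Write $g(t) \coloneqq \frac{1}{2\abs{t}} - \dot\beta_{\abs{t}}$. By \eqref{eq:drift-signs} we have $g > 0$ for $t \leq \tmax$. Set
\begin{equation*}
  G_T(t) \coloneqq \int_{-T}^t g(s) \d s = \tfrac12 \log\frac{T}{\abs{t}} - \beta_T + \beta_{\abs{t}} .
\end{equation*}
Since $\abs{\beta_r} \ll \log r$ by \eqref{eq:beta-bounds}, $G_T(\f{t}) \to \infty$ continuously as $T \to \infty$, for each fixed $\f{t}$. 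So I will choose $T(\f{t})$ as the first value with $G_T(\f{t}) = \gamma$, where $\gamma$ is a fixed numerical constant (something like $\gamma = 8$) to be tuned at the end. Because $g > 0$, this also gives $0 \leq G_T(t) \leq \gamma$ for all $t \leq \f{t}$, which is what will bound the integrating factor uniformly. Then $y \coloneqq \e^{-G_T} z$ solves
\begin{equation*}
  \partial_t y = \partial_x^2 y - \kappa\, \partial_x y + g\,\e^{-G_T} \Phi(x), \qquad y(t,0) = 0, \quad y(-T,\cdot) = 0 .
\end{equation*}
Duhamel's formula gives $y(\f{t},x) = \int_{-T}^{\f{t}} g(s)\,\e^{-G_T(s)}\, \big(P^\kappa_{s,\f{t}}\Phi\big)(x) \d s$, where $P^\kappa_{s,t}$ denotes the Dirichlet half-line propagator with drift $-\kappa$.

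For the lower bound \eqref{eq:lowerbis}, I would first treat $\kappa = 0$. The half-line Dirichlet heat semigroup applied to $\Phi$ (or to $\tbf{1}_{[1,\infty)}$) satisfies $(P_{s,t}\Phi)(x) \geq c \min\{1, x/\sqrt{t-s}\}$ for $x \geq 1$, and $\geq c\,x$ for $x \in (0,1]$ by the same formula. Restricting the $s$-integral to $s \leq 2\f{t}$, we have $t-s \geq \abs{s}/2 \gtrsim x^2$ only when $\abs{s} \gtrsim x^2$. That is not automatic for $x \leq \abs{\f{t}}/2$, so I would instead keep the factor $x/\sqrt{t-s}$ on the range where $\abs{s} \leq x^2$ and use $\min\{1,\cdot\}$ elsewhere.

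This gives a bound of the form $y(\f{t},x) \geq c\,x \int_{s \leq 2\f{t},\, \abs{s} \geq x^2} g\,\e^{-\gamma} \d s$, plus a contribution of order $x \int g\,|s|^{-1/2}\ldots$ on the other range. The integral over $\abs{s} \geq x^2 \vee 2\abs{\f{t}}$ equals $G_T$ evaluated at the corresponding time. This is where $x \leq \abs{\f{t}}/2$ gets used, along with the fact that most of the $\gamma$ units of $G_T$ accrue at $\abs{s} \gg \abs{\f{t}}^2$ once $T$ is forced large. It yields $y \geq c\,x\,(\gamma - C)\,\e^{-\gamma}$, hence $z = \e^{\gamma} y \geq c\,(\gamma - C)\,x \geq x$ for $\gamma$ large.

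The drift is handled perturbatively. Its accumulated displacement over $[s,\f{t}]$ is at most $C\log(\abs{s}/\abs{\f{t}})$ by \eqref{26may516}, which is $\ll \sqrt{t-s}$. So a Girsanov or comparison argument (shifting $\Phi$ by that amount, which reduces $\Phi$ by only a negligible amount) loses only a constant factor. This is absorbed by enlarging $\gamma$ and making $\tmax$ more negative.

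For the gradient bound \eqref{eq:z-derivbis}, differentiate: $q \coloneqq \partial_x y$ solves the same drifted heat equation with source $g\,\e^{-G_T}\Phi'$. Its boundary value is the delicate point, so I would bound $y$ directly between $0$ and a supersolution. The natural candidate is $\bar y(t,x) = \int_{-T}^t g\,\e^{-G_T} \d s \cdot \min\{1, A x\}$, suitably smoothed. Here $\Phi \leq \min\{1, \pi^{-1/2}x\}$ and $\kappa > 0$ pushes mass toward the boundary, which should make $\bar y$ a supersolution after smoothing the kink into a concave function. This gives $0 \leq y \leq (1 - \e^{-\gamma}) \min\{1, Ax\}$ and hence $\abs{\partial_x y(t,0)} \leq A$. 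The interior then follows from the maximum principle for $q$: the source $g\Phi'$ has integral at most $\gamma$ and $\norm{\Phi'}_\infty = \pi^{-1/2}$, so $\abs{q} \leq A + \pi^{-1/2}\gamma$, and $\abs{\partial_x z} \leq \e^{\gamma}\abs{q}$.

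I expect the main obstacle to be the tension between these two bounds. The lower bound needs $\gamma$ large, while the gradient bound picks up $\e^{\gamma}$, and a numerical constant like $16$ is demanded. I would resolve this by not using the crude factor $\e^{G_T}$. Instead I would argue directly on $z$: $\partial_x z$ satisfies $\partial_t q = \partial_x^2 q - \kappa\,\partial_x q + g\,(q + \Phi')$, so its sup grows like $\e^{G_T}$ only through the term $g\,q$. The lower bound in fact needs only $G_T$ of order $\log$ of a modest constant, because the linear growth from the source is multiplied by $\e^{G_T}$ as well. Tracking the constants with $\gamma \approx \log 4$ should give both $z(\f{t},\cdot) \geq x$ and $\abs{\partial_x z} \leq 16$. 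If that bookkeeping fails, I would fall back on the explicit supersolution $16x$ on $[0,1]$, glued to a bounded-slope profile, for the boundary gradient.
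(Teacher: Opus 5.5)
\textbf{There is a genuine gap: with $\gamma$ fixed, the lower bound $z(\f{t},x)\geq x$ is false.}

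Suppose $T$ is fixed by $G_T(\f{t})=\gamma$ with $\gamma$ a numerical constant. Then the spatially constant function $\bar z(t)\coloneqq \e^{G_T(t)}-1$ is a supersolution of \eqref{eq:finite-time}:
\begin{itemize}
\item it satisfies $\partial_t \bar z = g(\bar z+1)\geq g(\bar z+\Phi)$, because $\Phi\leq 1$ and your $g>0$;
\item it vanishes at $t=-T$;
\item it is nonnegative at $x=0$.
\end{itemize}
Hence $z(\f{t},x)\leq \e^{\gamma}-1$ for \emph{every} $x$. This contradicts $z(\f{t},x)\geq x$ on $[0,\abs{\f{t}}/2]$ once $\abs{\f{t}}>2\e^{\gamma}$.

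The slip is in your lower bound. Where $\f{t}-s\gtrsim x^2$ one has $\min\{1,x/\sqrt{\f{t}-s}\}=x/\sqrt{\f{t}-s}$, and you dropped the factor $(\f{t}-s)^{-1/2}$; the two ranges are also swapped. Restore it and write $g(s)\e^{G_T(\f{t})-G_T(s)}=g(s)\lambda(\f{t})/\lambda(s)$, with $\lambda$ as in \eqref{26apr1620}. The weight then integrates to something of order $\e^{\beta_{\abs{\f{t}}}}\abs{\f{t}}^{-1/2}\m{I}(T)$, where $\m{I}(r)=\int_1^r \dn r'/(r'\e^{\beta_{r'}})$. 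So $z(\f{t},x)$ is of order $x\,\e^{\beta_{\abs{\f{t}}}}\abs{\f{t}}^{-1/2}\m{I}(T)$. Since $\e^{\beta_{\abs{\f{t}}}}\abs{\f{t}}^{-1/2}\to 0$, this exceeds $x$ only if $\m{I}(T)\gtrsim\abs{\f{t}}^{1/2}\e^{-\beta_{\abs{\f{t}}}}$.

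That is possible only because of the standing divergence hypothesis \eqref{eq:integral-condition}, which your proposal never uses. If $\m{I}(\infty)<\infty$, then $z(\f{t},x)\leq 2x\,\e^{\beta_{\abs{\f{t}}}}\abs{\f{t}}^{-1/2}\m{I}(\infty)<x$ for every $T$ once $\abs{\f{t}}$ is large. Also, $G_T(\f{t})=\gamma$ forces $T/\abs{\f{t}}$ to stay bounded. So nothing accrues at $\abs{s}\gg\f{t}^2$, contrary to your claim.

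\textbf{How the paper avoids the tension you describe.} $T$ must be astronomically large, as in \eqref{eq:T-choice}. Then $\e^{G_T(\f{t})}$ is enormous, so any gradient bound that pays the integrating factor is useless. This includes your supersolution $\bar y\propto\min\{1,Ax\}$ followed by the maximum principle for $\partial_x y$.

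The fix is not to tune $\gamma$, but never to bound the integrating factor on its own. The two-sided comparison \eqref{eq:zeta-error}, together with $\abs{\partial_x\zeta}\leq 2(t-s+1)^{-1/2}$, shows that $z/x$ and $\partial_x z$ are both controlled by the single quantity $\e^{\beta_{\abs{t}}}\abs{t}^{-1/2}\m{I}(T)$. The paper normalizes this quantity as in \eqref{eq:T-choice}:
\begin{itemize}
\item it restricts the lower bound to $s\leq-\f{t}^2$, where $x/\sqrt{\f{t}-s+1}\leq 1$ and $\Phi(\eta)\geq\eta/2$;
\item it checks that $\m{I}(\f{t}^2)\leq\m{I}(T)/2$;
\item it uses the monotonicity of $r^{1/2}\e^{\beta_r}$ to pass to all $t\leq\f{t}$.
\end{itemize}
This yields $z(\f{t},x)\geq x$, $z\leq 16x$ and $\abs{\partial_x z}\leq 16$ simultaneously. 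Your perturbative treatment of the drift is fine in spirit; the paper handles it with an energy estimate in self-similar variables.
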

  The general condition \eqref{26may516} will allow us to treat a slightly different problem in a subsequent section.
  \begin{proof}
    Let 
    \begin{equation}
      \label{26apr1620}
      \lambda(t) \coloneqq \abs{t}^{-1/2} \e^{\beta_{\abs{t}}},
    \end{equation}
    which satisfies
    \begin{equation*}
      \dot\lambda(t) = \Big[\frac{1}{2 \abs{t}} - \dot{\beta}_{\abs{t}}\Big] \lambda(t).
    \end{equation*}
    We use Duhamel's formula to express $z$ in terms of an initial-value problem started at $s \in [-T,t]$ without the zeroth-order term in \eqref{eq:finite-time}.
    Writing
    \begin{equation*}
      z(t,x)=\lambda(t)r(t,x),
    \end{equation*}
    the function $r(t,x)$ satisfies
    \begin{equation*}
      \partial_tr=\partial_x^2r -\kappa(t)\partial_x r + \frac{\dot\lambda(t)}{\lambda^2(t)} \Phi(x).
    \end{equation*}
    Let $\zeta(t, x; s)$ solve
    \begin{equation}
      \label{eq:Duhamel-piecebis}
      \begin{cases}
        \partial_t \zeta = \partial_x^2 \zeta - \kappa(t) \partial_x \zeta & \text{for } t>s,x>0,\\
        \zeta(t,0;s) = 0 & \text{for } t > s,\\
        \zeta(s,x; s) = \Phi & \text{for } x>0.
      \end{cases}
    \end{equation}
    Then the function $r$ is given by Duhamel's formula
    \begin{equation*}
      r(t, x) = \int_{-T}^t \frac{\dot\lambda(s)}{\lambda^2(s)}  \zeta(t, x; s) \d s,
    \end{equation*}
    and $z(t,x)$ is given by
    \begin{align}
      z(t, x) &= \lambda(t) \int_{-T}^t \frac{\dot\lambda(s)}{\lambda^2(s)}  \zeta(t, x; s) \d s\nonumber\\
              &= \lambda(t) \int_{-T}^t \abs{s}^{1/2} \e^{-\beta_{\abs{s}}} \left[\frac{1}{2 \abs{s}} - \dot{\beta}_{\abs{s}}\right] \zeta(t, x; s) \d s.\label{eq:Duhamelbis2}
    \end{align}
    In the absence of drift ($\kappa=0$), the solution of \eqref{eq:Duhamel-piecebis} is simply
    \begin{equation*}
      \zeta_*(t,x;s) \coloneqq \Phi\bigl(x/\sqrt{t-s+1}\bigr).
    \end{equation*}
    Even when $\kappa \neq 0$, because $\Phi$ is increasing, the comparison principle implies that $\partial_x \zeta > 0$ at all times.
    It follows from \eqref{eq:drift-signs} that $\kappa \partial_x \zeta > 0$, so the drift in \eqref{eq:Duhamel-piecebis} has a negative effect.
    By comparison,
    \begin{equation}
      \label{eq:one-side}
      \zeta(t,x;s)\leq \zeta_*(t,x;s).
    \end{equation}
    However, we will need the result of Lemma~\ref{lem-26apr2002} with a more general $\kappa(t)$ in Section~\ref{sec:necessary} below.
    We therefore only assume that $\kappa(t)$ satisfies \eqref{26may516}, and do not rely on \eqref{eq:one-side}.

    We will argue that the drift does not have a significant effect on $\zeta$, so we aim to show that the difference $\zeta - \zeta_*$ is small.
    To accomplish this, we pass to self-similar variables: define
    \begin{equation*}
      \tau \coloneqq \log(t - s + 1) \And \eta \coloneqq x(t - s + 1)^{-1/2}.
    \end{equation*}
    It suffices to treat $t \in [s, -1]$, which corresponds to $\tau \leq [0,\log\abs{s}]$.
    Write
    \begin{equation}
      \label{eq:original-to-ss}
      \zeta(t,x;s) = \zeta_*(t,x;s) + q\bigl(\log(t - s + 1), x(t - s + 1)^{-1/2}\bigr),
    \end{equation}
    with an ``error'' term $q$.
    Then $q$ satisfies
    \begin{equation*}
      \partial_\tau q-\frac{\eta}{2}\partial_\eta q= \partial_\eta^2q - \kappa(t)\e^{\tau/2}\Phi'(\eta)-\kappa(t)\e^{\tau/2}\partial_\eta q.
    \end{equation*}
    Next, we write $q$ as
    \begin{equation}
      \label{26apr1614}
      q(\tau,\eta)=\e^{-\eta^2/8}p(\tau,\eta).
    \end{equation}
    A calculation shows that $p$ satisfies
    \begin{equation}
      \label{eq:ss}
      \begin{cases}
        \partial_\tau p = -\m{M} p - g \big(\partial_\eta - \frac{\eta}{4}\big)p 
        - g \pi^{-1/2} \e^{-\eta^2/8} & \text{for } \tau \in (0, \log\abs{s}), \eta > 0,\\
        p(\tau,0) = 0 & \text{for } \tau \in (0,\log \abs{s}),\\
        p(0,\eta) = 0 & \text{for } \eta > 0
      \end{cases}
    \end{equation}
    for the operator
    \begin{equation}
      \label{26apr1606}
      \m{M} \coloneqq -\partial_\eta^2 + \frac{\eta^2}{16} + \frac{1}{4}
    \end{equation}
    and the function
    \begin{equation*}
      g(\tau, s) \coloneqq \e^{\tau/2} \kappa\left(\e^\tau + s - 1\right),      
    \end{equation*}
    which satisfies
    \begin{equation}
      \label{eq:g-est}
      |g(\tau,s)|\lesssim \frac{\e^{\tau/2}}{1 + \abs{s} - \e^\tau}.
    \end{equation}

    We show that $p$ is small through a standard energy estimate, multiplying \eqref{eq:ss} by $p$ and integrating.
    In the following, $\norm{\anon}$ and $\braket{\anon,\anon}$ denote the $L^2(\R_+)$ norm and inner product, respectively.
    We find
    \begin{equation}
      \label{eq:energy-1}
      \frac{1}{2} \der{}{\tau} \norm{p}^2 = -\braket{\m{M}p,p} + 
      \frac{g}{4} \braket{\eta p,p} + g \pi^{-1/2}\braket{\e^{-\eta^2/8},p}.
    \end{equation}
    Young's inequality and \eqref{26apr1606} imply that
    \begin{equation*}
      \frac{|g|}{4} \braket{\eta p,p} \leq \braket{(\eta^2/16 + g^2)p,p} \leq \braket{\m{M}p,p} + g^2 \norm{p}^2.
    \end{equation*}
    Similarly, we have 
    \begin{equation*}
      |\braket{\e^{-\eta^2/8},p} |\leq \norm{p}^2 + \frac{\sqrt{\pi}}{4}.
    \end{equation*}
    Thus \eqref{eq:energy-1} yields
    \begin{equation}
      \label{eq:energy-2}
      \frac{1}{2} \der{}{\tau} \norm{p}^2 \leq (g^2 + \abs{g}) \norm{p}^2 + \abs{g}.
    \end{equation}
    Recalling that $\tau \leq \log \abs{s}$, we use \eqref{eq:g-est} to compute
    \begin{equation}
      \label{26apr1610}
      \int_0^{\log \abs{s}} g^2(\tau,s) \d \tau \lesssim \int_0^{\log \abs{s}} \frac{\e^\tau}{(1 + \abs{s} - \e^\tau)^2} \d \tau = \int_0^{\abs{s}} \frac{\dn r}{(1 + \abs{s} - r)^2} \leq 1
    \end{equation}
    and
    \begin{align}
      \int_0^{\log \abs{s}} \abs{g(\tau,s)} \d \tau  &\lesssim \int_0^{\log \abs{s}} \frac{\e^{\tau/2}}{1 + \abs{s} - \e^\tau} \d \tau 
                                                       \lesssim \int_0^{\abs{s}^{1/2}} \frac{\dn r}{1 + \abs{s} - r^2}\nonumber\\
                                                     &\lesssim \abs{s}^{-1/2} \int_0^{\abs{s}^{1/2}} \frac{\dn r}{\sqrt{\abs{s} + 1} - r} \lesssim \frac{\log \abs{s}}{\abs{s}^{1/2}}.\label{26apr1612}
    \end{align}
    Applying Gr\"onwall to \eqref{eq:energy-2}, \eqref{26apr1610} and \eqref{26apr1612} yield
    \begin{equation*}
      \norm{p}^2(\tau) \lesssim \int_0^{\tau} \abs{g(\tau', s)} \d \tau' \lesssim \frac{\log \abs{s}}{\abs{s}^{1/2}}.
    \end{equation*}
    Parabolic estimates allow us to upgrade this $L^2$ estimate to one in $W^{1,\infty}$:
    \begin{equation}
      \label{eq:p-small}
      \sup_{\eta > 0} (\abs{p} + \abss{\partial_\eta p}) \lesssim  \abs{s}^{-1/8}.
    \end{equation}
    Indeed, traditional parabolic estimates hold on a large compact domain $[0,L]$, and the confining potential $\eta^2/16$ in $\m{M}$ implies a maximum principle that prevents local maxima outside $[0,L]$.
    Integrating the derivative estimate from the Dirichlet condition $p|_{\eta = 0} = 0$, we also find $p \lesssim \eta \abs{s}^{-1/8}$.
    In combination,
    \begin{equation}
      \label{26apr1618}
      p \lesssim (\eta \wedge 1) \abs{s}^{-1/8}.
    \end{equation}
    Recalling the transformations \eqref{eq:original-to-ss} and \eqref{26apr1614} that link $p$ to $\zeta$, \eqref{26apr1618} implies that
    \begin{equation}
      \label{eq:zeta-error}
      \frac{1}{2} \Phi\Big(\frac{x}{\sqrt{t - s + 1}}\Big) 
      \leq \zeta(t,x;s) \leq 2 \Phi\Big(\frac{x}{\sqrt{t - s + 1}}\Big), 
    \end{equation}
    provided $s \leq t \leq \tmax$ is sufficiently negative.
    Using \eqref{26apr1620} and \eqref{eq:Duhamelbis2}, we obtain
    \begin{equation}
      \label{eq:equiv}
      \frac{1}{2} \leq \frac{z(t, x)}{\displaystyle\e^{\beta_{\abs{t}}} \abs{t}^{-1/2} \int_{-T}^t \abs{s}^{-1/2} \e^{-\beta_{\abs{s}}} \Phi\Big(\frac{x}{\sqrt{t - s + 1}}\Big) \d s} \leq 2.
    \end{equation}
    To bound $z$ more explicitly from above, we note that $\Phi(x) \leq \pi^{-1/2}x \leq x$.
    This gives 
    \begin{equation}
      \label{26apr1621}
      \begin{aligned}
        z(t, x) &\leq 2x \e^{\beta_{\abs{t}}} \abs{t}^{-1/2} 
                  \int_{-T}^t \e^{-\beta_{\abs{s}}} \abs{s}^{-1/2} (t - s + 1)^{-1/2} \d s.
      \end{aligned}
    \end{equation}
    Note that for $-T<s<t<-1$ we have 
    \begin{equation}
      \label{26apr2006}
      t-s+1=|s|-|t|+1<|s|.
    \end{equation}
    In addition, it follows from \eqref{eq:beta-dot} that there exists $T_0$ so that 
    \begin{equation}
      \label{26apr2608}
      \der{}{r}\big(r^{1/2}\e^{\beta_r}\big)=\Big(\frac{1}{2r^{1/2}}+r^{1/2}\dot\beta\Big)\e^{\beta_r} \geq 0 \ForAll r \geq T_0.
    \end{equation}
    Taking $\tmax \leq -T_0$, we deduce from \eqref{26apr1621} that
    \begin{align*}
      z(t,x) & \le 2 x \e^{\beta_{\abs{t}}} \abs{t}^{-1/2} \int_{-T}^t \e^{-\beta_{t-s+1}} (t-s+1)^{-1/2} (t - s + 1)^{-1/2} \d s.\\
             &\leq 2 x \e^{\beta_{\abs{t}}} \abs{t}^{-1/2} \int_1^{T - \abs{t} + 1} \frac{\dn r}{r \exp \beta_r}.
    \end{align*}
    Thus, if we let
    \begin{equation*}
      \m{I}(r) \coloneqq \int_1^r \frac{\dn r'}{r' \exp \beta_{r'}},
    \end{equation*}
    then
    \begin{equation}
      \label{eq:upper-0}
      z(t, x) \leq 2 x \e^{\beta_{\abs{t}}} \abs{t}^{-1/2} \m{I}(T).
    \end{equation}
    
    To derive a matching lower bound on $z$, we only integrate over $s \leq -t^2$ 
    in \eqref{eq:Duhamelbis2}.
    Note that if 
    \begin{equation}
      \label{26apr2012}
      \hbox{$t\ll -1$ and $x \leq \abs{t}/2$,}
    \end{equation} 
    then $s \leq -t^2$ yields
    \begin{equation*}
      \frac{x}{\sqrt{t - s + 1}} \leq 1.
    \end{equation*}
    Moreover, the function $\Phi(\eta)/\eta$ is decreasing on $\R_+$ because $\Phi(0) = 0$ and $\Phi$ is concave.
    It follows that for all $\eta \in (0,1]$,
    \begin{equation*}
      \frac{\Phi(\eta)}{\eta} \geq \Phi(1)=\frac{1}{\sqrt{\pi}}\int_0^1 \e^{-|y|^2/4}dy\approx{0.52} > \frac{1}{2}.
    \end{equation*}
    Thus \eqref{eq:equiv}, together with another application of \eqref{26apr2006}, gives a lower bound
    \begin{align}
      z(t, x) &\geq 2^{-2} x \e^{\beta_{\abs{t}}} \abs{t}^{-1/2} \int_{-T}^{-t^2} \e^{-\beta_{\abs{s}}} \abs{s}^{-1/2} (t - s + 1)^{-1/2} \d s\nonumber\\
              &\geq 2^{-2} x \e^{\beta_{\abs{t}}} \abs{t}^{-1/2} \int_{t^2}^T \frac{\dn r}{r \exp \beta_r} = 2^{-2} x \e^{\beta_{\abs{t}}} \abs{t}^{-1/2} [\m{I}(T) - \m{I}(t^2)].\label{eq:lower-0}
    \end{align}
    We now use the key hypothesis \eqref{eq:integral-condition}, which states that $\m{I}(\infty) = \infty$.
    We can therefore fix any $\f{t} \leq \tmax$ and choose $T = T(\f{t})$ so that
    \begin{equation}
      \label{eq:T-choice}
      \e^{\beta_{\abs{\f{t}}}} \abs{\f{t}}^{-1/2} [\m{I}(T) - \m{I}(\f{t}^2)] = 4.
    \end{equation}
    By \eqref{eq:T-choice} and \eqref{eq:beta-bounds}, we have
    \begin{equation*}
      \m{I}(T) \gtrsim \abs{\f{t}}^{1/4}.
    \end{equation*}
    Since $\abs{\beta_r} \ll \log r$, $\m{I}(r) \leq C(\delta) r^\delta$ for all $\delta > 0$.
    It follows that
    \begin{equation*}
      \m{I}(\f{t}^2) \ll \abs{\f{t}}^{1/4}.
    \end{equation*}
    Provided $\tmax$ is sufficiently negative, we can therefore assume that 
    \begin{equation*}
      \m{I}(\f{t}^2) \leq \m{I}(T)/2,
    \end{equation*}
    and hence \eqref{eq:T-choice} yields
    \begin{equation}
      \label{eq:T-bound}
      \e^{\beta_{\abs{\f{t}}}} \abs{\f{t}}^{-1/2} \m{I}(T) \leq 8.
    \end{equation}
    Combining \eqref{eq:T-bound} and \eqref{eq:upper-0}, and using \eqref{26apr2608}, we find
    \begin{equation}
      \label{eq:upper}
      z(t, x) \leq 16 x \ForAll t \leq \f{t}.
    \end{equation}
    Similarly, \eqref{eq:lower-0} and \eqref{eq:T-choice} yield
    \begin{equation}
      \label{eq:lower}
      z(\f{t}, x) \geq x \ForAll x \leq \frac{\abs{\f{t}}}{2}.
    \end{equation}
    We recall that the restriction on $x$ in \eqref{eq:lower} follows from \eqref{26apr2012}.
    This lower bound cannot hold for \emph{all} $x$ because it uses $\Phi(\eta) \geq \eta/2$, which fails for large $\eta.$
    This completes the proof of \eqref{eq:lowerbis}.

    It remains to show the derivative bound \eqref{eq:z-derivbis}.
    Combining \eqref{eq:p-small}, \eqref{eq:original-to-ss}, and \eqref{26apr1614}, we have $\abss{\partial_\eta \zeta} \leq 2$ provided $\tmax\ll -1$.
    This translates to
    \begin{equation*}
      \abs{\partial_x \zeta} \leq 2 (t - s + 1)^{-1/2}.
    \end{equation*}
    Using this bound and \eqref{26apr1620} in \eqref{eq:Duhamelbis2}, we have
    \begin{align*}
      |\partial_xz(t, x)| &\le 
                            2 \abs{t}^{-1/2} \e^{\beta_{\abs{t}}}\int_{-T}^t \abs{s}^{1/2} \e^{-\beta_{\abs{s}}} \left[\frac{1}{2 \abs{s}} - \dot{\beta}_{\abs{s}}\right] \frac{1}{(t - s + 1)^{1/2}} \d s\\
                          &\le 2 \abs{t}^{-1/2} \e^{\beta_{\abs{t}}}\int_{-T}^t \abs{s}^{-1/2} \e^{-\beta_{\abs{s}}} 
                            \frac{1}{(t - s + 1)^{1/2}} \d s.
    \end{align*}
    The right side above is exactly as in \eqref{26apr1621} except for the factor of $x$.
    Mimicking the arguments leading to \eqref{eq:upper}, we now obtain
    \begin{equation}
      \label{eq:z-deriv}
      \abs{\partial_x z(t,x)} \leq 16 \ForAll t \leq \f{t}.
    \end{equation}
    This finishes the proof of Lemma~\ref{lem-26apr2002}.
  \end{proof}
  We now resume the proof of Lemma~\ref{lem:der-mart-lower}.
  We apply Lemma~\ref{lem-26apr2002} with $\kappa$ given by \eqref{26apr1602}.
  We henceforth assume $t \leq \f{t}$.
  We assemble our subsolution of the form \eqref{26apr2016}, except we replace $z$ by $2^{-4}z$.
  The small factor $2^{-4}$ helps us achieve $\partial_x \ubar{u} \leq 0$.
  Indeed, recalling the exponential factor $\e^{-x}$ in \eqref{eq:1D-transform}, \eqref{eq:z-deriv} yields
  \begin{equation}
    \label{eq:decreasing}
    \partial_x[\e^{-x}(2^{-4} z + 1)] \leq -\e^{-x}(1 - 2^{-4} \partial_x z) \leq 0.
  \end{equation}
  Recall that $\cubar \leq 1$.
  By \eqref{eq:upper},
  \begin{equation}
    \label{26apr2018}
    \cubar \e^{-x}(2^{-4}z + 1) \leq \e^{-x}(x + 1) \leq 2.
  \end{equation}
  Let $a_z(t)$ solve 
  \begin{equation*}
    \dot a_z = - 2 \abs{t}^{-2} a_z^2, \quad a_z(-\infty) = 1.
  \end{equation*}
  A solution exists for all $t \ll -1$ because $\abs{t}^{-2}$ is integrable at infinity.
  Then a direct computation shows that
  \begin{equation}
    \label{26apr2331}
    \ti w \coloneqq \cubar a_z(t)(2^{-4}z + 1) 
  \end{equation}
  is a subsolution of the nonlinear equation in \eqref{eq:shift}: by \eqref{eq:drift-signs}, \eqref{eq:finite-time}, and \eqref{26apr2018},
  \begin{align*}
    \partial_t \ti w&- \partial_x^2 \ti w - \left[\frac{1}{2\abs{t}} - \dot{\beta}_{\abs{t}}\right] \ti w +\kappa(t) \partial_x \ti w + \abs{t}^{-2} \e^{-x} \ti w^2\\
                    &= \cubar\dot a_z(2^{-4}z + 1) - \cubar a_z \left[\frac{1}{2\abs{t}} - \dot{\beta}_{\abs{t}}\right] [1 - 2^{-4} \Phi(x)] + |t|^{-2}\e^{-x}\cubar^2a_z^2(2^{-4}z +1)^2\\
                    &\leq -\cubar \abs{t}^{-2} a_z^2 (2^{-4}z + 1)[2 - \cubar \e^{-x}(2^{-4}z +1)] \leq 0
  \end{align*}
  We also have the boundary comparison for \eqref{eq:shift}: $\ti w(t, 0) \leq \cubar$.
  Since $a_z(-\infty) = 1$, we may assume $t_{\text{max}}$ is sufficiently negative that
  \begin{equation}
    \label{26apr2335}
    a_z(t) \geq 1/2 \ForAll t\le t_{\text{max}}.
  \end{equation}
  Then \eqref{eq:lower} yields
  \begin{equation}
    \label{eq:w-lower}
    \ti{w}(\f{t}, x) \geq \frac{\cubar}{2}[2^{-4}z(\f{t},x) + 1] \geq \cubar 2^{-5}x + \frac{\cubar}{2} \ForAll x \leq \abs{\f{t}}/2.
  \end{equation}
  
  We next claim that
  \begin{equation}
    \label{eq:log-deriv}
    \abs{\partial_x z} \leq \frac{z}{8} \ForAll x \geq 2^9.
  \end{equation}
  Since $z$ is a positive superposition \eqref{eq:Duhamelbis2} of $\zeta(t,x;s)$ with coefficients that do not depend on $x$, it suffices to show the same for $\zeta(t, x; s)$.
  By \eqref{eq:original-to-ss}, \eqref{26apr1614} and \eqref{eq:p-small}, we have 
  \begin{equation}
    \label{eq:zeta-deriv}
    \abs{\partial_x \zeta} = (t - s + 1)^{-1/2} \absb{\partial_\eta(\Phi - \e^{-\eta^2/8}p)} \leq 2 (t - s + 1)^{-1/2} \e^{-\eta^2/16}.
  \end{equation}
  Also, \eqref{eq:zeta-error} yields 
  \begin{equation}
    \label{26apr2329}
    \zeta \geq \Phi(\eta)/2.
  \end{equation}
  If $\eta \leq 2^4$, then one can numerically verify $\Phi(\eta) \geq 2^{-5}\eta$, and \eqref{26apr2329} gives 
  \begin{equation*}
    \zeta \geq 2^{-5} x (t - s + 1)^{-1/2}.
  \end{equation*}
  Thus $x \geq 2^9$ and \eqref{eq:zeta-deriv} imply that
  \begin{equation*}
    \abs{\partial_x \zeta}\le 2 (t - s + 1)^{-1/2}\le 2^6\frac{\zeta}{x}\le\frac{\zeta}{8}.
  \end{equation*} 
  On the other hand, a numerical calculation yields
  \begin{equation*}
    2^5 \e^{-\eta^2/16} \leq \Phi(\eta) \ForAll \eta\geq 16.
  \end{equation*}
  Using $t-s+1\ge 1$ in \eqref{eq:zeta-deriv}, \eqref{26apr2329} yields
  \begin{equation*}
    \abss{\partial_\eta \zeta} \leq 2 \e^{-\eta^2/16} \leq \frac{\Phi(\eta)}{16} \leq \frac{\zeta}8.
  \end{equation*}
  We conclude that in both cases 
  \begin{equation*}
    \abss{\partial_x \zeta} \leq \frac{\zeta}8 \ForAll x \geq 2^8.
  \end{equation*}
  Taking a weighted integral over $s$ in \eqref{eq:Duhamelbis2}, we see that  the same holds for $z$, confirming \eqref{eq:log-deriv}.

  We use \eqref{eq:log-deriv} to help cut off $z$ where $x \geq \abs{t}/2$.
  Given $M > 0$, define
  \begin{equation}
    \label{26apr2332}
    a_m(t) \coloneqq \exp(-M\e^{t/8}) \And m(t,x) \coloneqq a_m(t)\big[1 - \e^{(2x + t)/8}\big].
  \end{equation}
  We claim that we can choose $M$ sufficiently large that $m\ti w$ is a subsolution of the nonlinear equation~\eqref{eq:shift} in the region where $x \leq \abs{t}/2$.
  The zeroth-order linear term is unaffected by a multiplier, and
  \begin{equation*}
    -m\ti w^2 \leq -(m\ti w)^2
  \end{equation*}
  because $m \leq 1$, so the nonlinear term in~\eqref{eq:shift} is helpful.
  Thus, it suffices to show that
  \begin{equation*}
    \m{Q} \coloneqq (\partial_t - \partial_x^2 + \kappa \partial_x)(m\ti w) - m(\partial_t - \partial_x^2 + \kappa \partial_x)\ti w \leq 0.
  \end{equation*}
  Computing, we get 
  \begin{align}
    \m{Q} &= \ti w (\partial_t - \partial_x^2 + \kappa \partial_x) m - 2 \partial_xm \partial_x \ti w\nonumber\\
          &= -\frac{M}{8} \e^{t/8} m \ti w - \frac{a_m(t)}{2}\e^{(2x + t)/8}\Big(\frac{1 + 4 \kappa}{8}\ti w - \partial_x \ti w\Big).\label{26apr2334}
  \end{align}
  Recall from \eqref{eq:drift-signs} that $\kappa > 0$.
  Thus, if we know that 
  \begin{equation*}
    \partial_x \ti w \leq \frac{\ti w}{8},
  \end{equation*}
  then $\m{Q} \leq 0$.
  If $x \geq 2^9$, this follows from \eqref{eq:log-deriv} and the definition \eqref{26apr2331} of $\ti w$.

  Now suppose $x \leq 2^9$.
  By \eqref{eq:z-deriv}, \eqref{26apr2331},  and \eqref{26apr2332}, the derivative term in \eqref{26apr2334} can be bounded from above as
  \begin{equation*}
    a_m \e^{(2x + t)/8} \partial_x \ti w \leq \cubar \e^{2^7} a_m \e^{t/8}.
  \end{equation*}
  On the other hand, because $x \leq 2^9$, we may assume that $t$ is sufficiently negative that $\e^{(2x+t)/8} \leq 1/2$.
  Then $m \geq a_m/2$ and \eqref{eq:w-lower} allows us to bound the first term in the right side of \eqref{26apr2334} by
  \begin{equation*}
    \frac{M}{8} \e^{t/8} m \ti w \geq \cubar 2^{-5} M a_m \e^{t/8}.
  \end{equation*}
  Thus if we choose $M = 2^5 \e^{2^7}$, then $\m{Q} \leq 0$ and $m \ti w$ is a subsolution.
  Since $a_m(t) \to 1$ as $t \to -\infty$, we may assume that $t$ is sufficiently negative that $a_m(t) \geq 1/2$.
  This complements \eqref{26apr2335}.
  We can finally define $\ubar{T} \coloneqq T$ and
  \begin{equation*}
    \ubar{w} \coloneqq m \ti w, \quad \ubar{v}(t, x; \f{t}) \coloneqq \ubar{w}(t,x - 2 \log\abs{t}; \f{t}), \And \ubar{u}(t, x; \f{t}) \coloneqq \e^{-x} \ubar v(t, x; \f{t}).
  \end{equation*}
  We highlight the dependence on $\f{t}$ through the starting time $-T(\f{t})$ in Lemma~\ref{lem-26apr2002}, which was chosen to satisfy \eqref{eq:T-choice}.

  We now confirm that $\ubar{u}$ has the desired properties.
  The cutoff $m$ ensures that 
  \begin{equation*}
    \hbox{$\ubar{u}(t,x) = 0$ where $x \geq \dfrac{\abs{t}}2 + 2 \log \abs{t}$},
  \end{equation*}
  which includes the region $x \geq \abs{t}$ provided $\tmax \ll -1$.
  Where $x = 2 \log \abs{t}$, we have $\ubar{u} \leq \cubar \e^{-x}$.
  The same holds at $t = -\ubar{T}$ because $z = 0$ there.
  By \eqref{eq:decreasing}, $\partial_x \ubar{u} \leq 0$.
  By construction, $\ubar{u}$ is a subsolution of \eqref{eq:ancient}.
  
  Finally, where $x \leq \abs{t}/4$, we may assume $m \geq 1/4$.
  Then \eqref{eq:w-lower} yields
  \begin{align*}
    \ubar{u}(\f{t}, x; \f{t}) = \e^{-x} \ubar{w}(\f{t}, x - 2 \log \abs{\f{t}}; \f{t}) &\geq 2^{-2} \e^{-x} \ti w(\f{t}, x - 2 \log \abs{\f{t}}; \f{t})\\
                                                                                       &\geq 2^{-7} \cubar \e^{-x} (x - 2 \log \abs{\f{t}}).
  \end{align*}
  If $x \geq 4 \log \abs{\f{t}}$, we obtain
  \begin{equation*}
    \ubar{u}(\f{t}, x; \f{t}) \geq 2^{-8} \cubar x \e^{-x}.
  \end{equation*}
  This finishes the proof of Lemma~\ref{lem:der-mart-lower}.
  As we have noted below its statement, this also completes the proof of Proposition~\ref{prop:der-mart-multi-bis2}.
\end{proof}

\section{The proof of Proposition~\ref{prop:der-mart-multi-bis}}
\label{sec:excursions-proof}

Fix $T\ge 2$ and let $h_T$ be the solution to~\eqref{26apr2312}:
\begin{equation}
  \label{26apr2402}
  \begin{cases}
    \partial_th_T=\Delta h_T+h_T-h_T^2=0, & t\in\R, x\in\m{A}^\cc(-t),\\
    h_T(t,x)=1, & t<-T,~x\in\m{A}(-t),
  \end{cases}
\end{equation}
with the set $\m{A}(t)$ defined in \eqref{26apr2404}--\eqref{26apr2306}.
We need to show that for every nonempty open $U' \Subset U$, there exists $\tmax(U') \leq -T$ and $c(d,U') > 0$ such that
\begin{equation}
  \label{26apr2408}
  h_T(t, r, \theta) \geq c [b_{\abs{t}} - r] \e^{-[b_{\abs{t}} - r]}
\end{equation}
for all $t \leq \tmax$, $\theta \in U'$, and $r$ satisfying $4 \log \abs{t} \leq b_{\abs{t}} - r \leq \abs{t}/8$.

In spherical coordinates, \eqref{26apr2402} becomes
\begin{equation}
  \label{eq:hitting}
  \begin{cases}
    \partial_t h_T = \partial_r^2 h_T + 
    \frac{d-1}{r} \partial_r h_T + \frac{1}{r^2} \Delta_\theta h_T + h_T - h_T^2, & t\in\R, x\in\m{A}^\cc(-t),\\
    h_T(t,x) = 1, & t<-T,~x\in\m{A}(-t).
  \end{cases}
\end{equation}
We transform the one-dimensional subsolutions $\mr{u}$ and $\ubar{u}$ of \eqref{eq:ancient} 
into lower bounds on $h_T.$
Before we define them formally, let us first explain the intuition.
Let $x$ in the one-dimensional problem \eqref{eq:ancient} represent the distance $b_{\abs{t}} - r$ 
inside the threshold in \eqref{26apr2408}, 
\begin{equation*}
  r = b_{\abs{t}} - x.
\end{equation*}
In the $b_{\abs{t}}$-moving frame, the drift in \eqref{eq:hitting} is
\begin{equation*}
  \dot{b}_{\abs{t}} + \frac{d-1}{r}.
\end{equation*}
In comparison, the drift in \eqref{eq:ancient} is $\dot{b}_{\abs{t}}^{\textnormal{1D}}$.
Using \eqref{eq:threshold} and \eqref{eq:1D-barrier}, their difference is
\begin{equation}
  \label{eq:drift-diff}
  \dot{b}_{|t|} + \frac{d-1}{r} - \dot{b}_{|t|}^{\textnormal{1D}} = \frac{d-1}{r} - \frac{d-1}{2\abs{t}}.
\end{equation}
This is positive whenever $r < 2\abs{t}$.
Because $\mr{u}$ and $\ubar{u}$ are decreasing in $x$, the difference \eqref{eq:drift-diff} in drifts favors a subsolution in this region.
Thus we only need to treat an unfavorable discrepancy in the $\m{O}(\log \abs{t})$ region where $r > 2 \abs{t}$, and there the difference in \eqref{eq:drift-diff} is quite small.
To do so, let $\xi \colon (-\infty, -1) \to \R$ solve
\begin{equation}
  \label{eq:mismatch}
  \dot \xi = (d-1) \Big(\frac{1}{2\abs{t}} - \frac{1}{b_{\abs{t}}}\Big)_+, \quad \xi(-\infty) = 0.
\end{equation}
The solution to \eqref{eq:mismatch} is bounded because the right side is of order $|t|^{-2}\log \abs{t}$.
Shifting $u$ by the slowly moving but bounded quantity $\xi$ will correct the slight mismatch in first-order terms between \eqref{eq:ancient} and \eqref{eq:hitting}.

Next, we account for the angular dependence of the boundary condition in \eqref{eq:hitting}.
Let $V$ be a smooth open set such that $U' \Subset V \subset U$.
Let $\lambda, \psi > 0$ denote the principal eigenvalue and eigenfunction of the Dirichlet problem
\begin{equation}
  \label{26apr2414}
  \begin{cases}
    -\Delta_\theta\psi = \lambda\psi & \text{in }V,\\
    \psi = 0 & \text{on } \partial V,
  \end{cases}
\end{equation}
normalized so that $\sup_V \psi = 1$.
When we apply $\Delta_\theta$ to $\psi$, we pick up a factor of $\lambda$, which is 
accompanied by $r^{-2}$ in \eqref{eq:hitting}.
Because $\mr{u}$ and $\ubar{u}$ vanish where $x \geq \abs{t}$, or equivalently 
$r \leq b_{\abs{t}} - \abs{t}$, we can bound $r^{-2}$ from above in \eqref{eq:hitting} by 
\begin{equation*}
  [b_{\abs{t}} - \abs{t}]^{-2} \lesssim \abs{t}^{-2}.
\end{equation*}
This term is integrable at infinity, so we can absorb the angular term through a slowly shrinking multiplier.
To this end, define $a_\theta$ satisfying
\begin{equation}
  \label{26apr2416}
  \dot{a}_\theta = -\frac{\lambda}{[b_{\abs{t}} - \abs{t}]^2} a_\theta, \quad a_\theta(-\infty) = 1.
\end{equation}
Then $1 \lesssim a_\theta \leq 1$.

We can now assemble the pieces discussed above.
As in the proof of the one-dimensional Proposition~\ref{prop:der-mart-multi-bis2}, we proceed in two steps.
First, we prove a purely exponential lower bound, as in Lemma~\ref{lem:exp-lower}, and then improve it to include the linear factor $b_{|t|}-r$, as in Lemma~\ref{lem:der-mart-lower}.

Take $\tmax \leq -1$ from Lemma~\ref{lem:der-mart-lower} and fix $\f{t} \leq \tmax$.
Define
\begin{equation*}
  \mr{h}(t,r,\theta; \f{t}) \coloneqq a_\theta(t) \psi(\theta) \mr{u}(t, b_{\abs{t}} + \xi(t) - r; \f{t})
\end{equation*}
for $t \leq \tmax$, $\theta \in \bar{V}$, and 
\begin{equation}
  \label{26apr2702}
  b_{\abs{t}} - r \in [0, \abs{t} - \xi(t)].
\end{equation}
Here $\mr{u}$ is the one-dimensional subsolution constructed in Lemma~\ref{lem:exp-lower}, and the functions $a_\theta$, $\psi$, and $\xi$ were constructed above.
We claim $\mr{h}$ is a subsolution for $h_T$.

The comparison $\mr{h}\le h_T$ holds at the boundary portion $\{r=b_{|t|},\theta\in U,t<-T\}$ because there
\begin{equation*}
  \mr{h} \leq \psi(\theta) \leq \tbf{1}_U(\theta) = h_T.
\end{equation*}
Likewise, where $\theta \in \partial V$, $\mr{h} = 0 \leq h_T$.
This accounts for the boundary data.
For the initial data, $\mr{u}$ and thus $\mr{h}$ vanishes in the distant past, 
and is then less than $h_T$.

Therefore, we only need to verify that $\mr{h}$ satisfies the appropriate differential inequality.
Using Lemma~\ref{lem:exp-lower}~\ref{item:exp-sub} as well as \eqref{26apr2416}, we compute, for $t<0$:
\begin{align}
  \partial_t \mr{h} &= -\frac{\lambda}{[b_{\abs{t}} - \abs{t}]^2} \mr{h} 
                      + a_\theta \psi \partial_t \mr{u} - a_\theta \psi [\dot{b}_{\abs{t}} - \dot \xi(t)] \partial_x \mr{u}\nonumber\\
                    &\le-\frac{\lambda}{[b_{\abs{t}} - \abs{t}]^2} \mr{h}+a_\theta \psi
                      \big(\partial_x^2 \mr{u} + \dot{b}_{\abs{t}}^{\textnormal{1D}} \partial_x \mr{u} + \mr{u} - 
                      \mr{u}^2 -[\dot{b}_{\abs{t}} - \dot \xi(t)] \partial_x \mr{u}\big)\nonumber\\
                    &\leq -\frac{\lambda}{[b_{\abs{t}} - \abs{t}]^2} \mr{h} + \partial_r^2 \mr{h} 
                      + [\dot{b}_{\abs{t}} - \dot \xi(t) -\dot{b}_{\abs{t}}^{\textnormal{1D}}] \partial_r \mr{h} + \mr{h} - (a_\theta \psi)^{-1}\mr{h}^2.\label{26apr2708}
\end{align}
We now estimate the terms in the right side of \eqref{26apr2708}.
For the drift, \eqref{eq:mismatch} and the definition \eqref{eq:1D-barrier} of ${b}_{t}^{\textnormal{1D}}$ yield
\begin{align*}
  \dot{b}_{\abs{t}} - \dot \xi(t)-\dot{b}_{\abs{t}}^{\textnormal{1D}}&=
                                                                       2+ \frac{d-2}{2 \abs{t}} + \dot{\beta}_{\abs{t}} - 
                                                                       \dot{\xi}(t) -2+\frac{1}{2|t|}-\dot\beta_{|t|}\\
                                                                     &=\frac{d-1}{2 \abs{t}}-(d-1) \Big(\frac{1}{2\abs{t}} - \frac{1}{b_{\abs{t}}}\Big)_+
                                                                       \leq \frac{d-1}{b_{\abs{t}}} \leq \frac{d-1}{r},
\end{align*}
because $b_{|t|}\ge r$ by \eqref{26apr2702}.
By Lemma~\ref{lem:exp-lower}, $\partial_x \mr{u} \leq 0$, which implies that $\partial_r \mr{h} \geq 0$.
Therefore, the middle term in the right side of \eqref{26apr2708} can be bounded as 
\begin{equation}
  \label{26apr2710}
  [\dot{b}_{\abs{t}} - \dot \xi(t) -\dot{b}_{\abs{t}}^{\textnormal{1D}}] \partial_r \mr{h} \leq \frac{d-1}{r} \partial_r \mr{h}.
\end{equation}
The eigenvalue problem \eqref{26apr2414} implies that
\begin{equation}
  \label{26apr2706}
  \Delta_\theta \mr{h} = -\lambda \mr{h}.
\end{equation}
In addition, \eqref{26apr2702} and positivity of $\xi(t)$ given by \eqref{eq:mismatch} imply that
\begin{equation*}
  0<b_{|t|}-|t|\le r-\xi(t)\le r.
\end{equation*}
Together with \eqref{26apr2706}, this bounds the first term in the right side of \eqref{26apr2708} as  
\begin{equation}
  \label{26apr2712}
  -\frac{\lambda}{[b_{\abs{t}} - \abs{t}]^2} \mr{h} 
  \leq -\frac{\lambda}{r^2} \mr{h} = \frac{1}{r^2} \Delta_\theta \mr{h}.
\end{equation}
Finally, as $a_\theta,\psi \leq 1$, the last term in \eqref{26apr2708} satisfies
\begin{equation}
  \label{26apr2714}
  -(a_\theta \psi)^{-1}\mr{h}^2 \leq -\mr{h}^2.
\end{equation}
Inserting \eqref{26apr2710}, \eqref{26apr2712} and \eqref{26apr2714} 
into \eqref{26apr2708}, we obtain 
\begin{equation*}
  \partial_t \mr{h} \leq \partial_r^2 \mr{h} + \frac{d-1}{r} \partial_r \mr{h} 
  + \frac{1}{r^2} \Delta_\theta \mr{h} + \mr{h} - \mr{h}^2.
\end{equation*}
Thus, $\mr{h}$ is indeed a subsolution on its support.

By the comparison principle, we deduce that $h_T\geq \mr{h}$.
Using \eqref{26apr1320_1} from Lemma~\ref{lem:exp-lower}, we find
\begin{equation*}
  h_T(\f{t}, r, \theta) \geq \mr{h}(\f{t},r,\theta; \f{t}) \geq 
  a_\theta(\f{t}) \psi(\theta) \cexp \e^{-[b_{\abs{\f t}} + \xi(\f t) - r]} 
  \ForAll b_{\abs{\f{t}}} - r \in [0, \abs{\f{t}}/2].
\end{equation*}
Let $\cubar \coloneqq \cexp \e^{-\xi(\tmax)}$.
Because $\xi$ is increasing and $\f{t} \leq \tmax$ was arbitrary, we have
\begin{equation}
  \label{eq:h-exp}
  h_T(t, r, \theta) \geq  a_\theta(t) \psi(\theta) \cubar \e^{-[b_{\abs{t}} - r]} 
  \ForAll t \leq \tmax, \enspace b_{\abs{t}} - r \in [0, \abs{t}/2].
\end{equation}
This exponential lower bound on $h_T$ is the analog of Lemma~\ref{lem:exp-lower}.

Next, we use lower bound \eqref{eq:h-exp} at the location $r=b_{|t|}-2\log|t|$ to bootstrap an additional linear factor via Lemma~\ref{lem:der-mart-lower}.
We define
\begin{equation*}
  \ubar{h}(t,r,\theta; \f{t}) \coloneqq a_\theta(t) \psi(\theta) \ubar{u}(t, b_{\abs{t}} + \xi(t) - r; \f{t})
\end{equation*}
for $t \leq \f{t} \leq \tmax$, $\theta \in \bar{V}$, and 
\begin{equation*}
  b_{\abs{t}} - r \in [2 \log \abs{t}, \abs{t} - \xi(t)].
\end{equation*}
Here, $\ubar{u}$ is the subsolution constructed in Lemma~\ref{lem:der-mart-lower}, with $\cubar = \cexp \e^{-\xi(\tmax)}$ as above.
The same argument used for $\mr{h}$ shows that $\ubar{h}$ is a subsolution of the evolution equation in \eqref{eq:hitting}.

For the boundary data, $\ubar{h} = 0 \leq h_T$ where $\theta \in \partial V$.
Where $r = b_{\abs{t}} - 2 \log \abs{t}$, Lemma~\ref{lem:der-mart-lower}~\ref{item:der-mart-upper}, $\xi \geq 0$, and \eqref{eq:h-exp} ensure that
\begin{equation*}
  \ubar{h} \leq a_\theta \psi \cubar \e^{-[b_{\abs{t}} + \xi(t) - r]} 
  \leq a_\theta \psi \cubar \e^{-[b_{\abs{t}} - r]} \leq h_T.
\end{equation*}
The same relation holds at time $t = -\ubar{T} < \f{t}$ where $x \leq \abs{\ubar{T}}/2$, so $\ubar{h} \leq h_T$ both at an initial time and along appropriate boundaries.
By the comparison principle, we conclude that $h_T \geq \ubar{h}$.
Using \eqref{26apr1320} from Lemma~\ref{lem:der-mart-lower}, we obtain
\begin{align*}
  h_T(\f{t},r,\theta) \geq \ubar{h}(\f{t},r,\theta; \f{t}) &\geq a_\theta(\f t) \psi(\theta) \ubar{u}(\f{t},b_{\abs{\f{t}}} + \xi(\f{t}) - r; \f{t})\\
                                                           &\geq a_\theta(\f t) \psi(\theta) \e^{-\xi(\tmax)} 2^{-8} \cubar [b_{\abs{\f{t}}} - r] \e^{-[b_{\abs{\f{t}}} - r]}
\end{align*}
for all 
\begin{equation*}
  b_{\abs{\f{t}}} - r \in [4 \log\abs{\f{t}},\abs{\f{t}}/4 - \xi(\f{t})].
\end{equation*}
In particular, this holds for 
\begin{equation*}
  b_{\abs{\f{t}}} - r \in [4 \log\abs{\f{t}},\abs{\f{t}}/8].
\end{equation*}
As $U' \Subset V$, we know  that $\inf_{U'} \psi > 0$.
Taking $\tmax \ll -1$, we can arrange $a_\theta \geq 1/2$ and $\e^{-\xi(\tmax)} \geq 1/2$.
Setting 
\begin{equation*}
  c(d,U') \coloneqq 2^{-10} \cubar \inf_{U'} \psi \geq 2^{-11} \cexp \inf_{U'} \psi > 0,
\end{equation*}
we find
\begin{equation*}
  h(t,r,\theta) \geq c [b_{\abs{t}} - r] \e^{-[b_{\abs{t}} - r]}
\end{equation*}
for all $t \leq \tmax$, $\theta \in U'$, and $b_{\abs{t}} - r \in [4 \log\abs{t},\abs{t}/8]$.
This finishes the proof of Proposition~\ref{prop:der-mart-multi-bis}.

\section{Necessity of integral condition}
\label{sec:necessary}

As a complement to the work above, we now show that if the integral in \eqref{eq:integral-condition} is finite, then the BBM will eventually lie inside the radius $b_t$.
We therefore assume that \eqref{eq:beta-dot} holds and
\begin{equation}
  \label{eq:necessary}
  \int_1^\infty \frac{\dn r}{r \exp \beta_r} < \infty.
\end{equation}
Let $E_T(S^{d-1})$ be the event that a particle of the BBM crosses the radial threshold~$r=b_t$ after time $T$.
The main result of this section is the following.
\begin{proposition}
  \label{prop:necessary}
  If $\beta$ satisfies \eqref{eq:beta-dot} and \eqref{eq:necessary}, then
  \begin{equation*}
    \lim_{T \to \infty} \P[E_T(S^{d-1})] = 0.
  \end{equation*}
\end{proposition}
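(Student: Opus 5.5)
We plan to show that $\P[E_T(S^{d-1})]$ is small by running the construction of Section~\ref{sec:1D-excursions} in reverse, building a \emph{supersolution} rather than a subsolution. By Lemma~\ref{lem:ancientbis} with $U = S^{d-1}$, the hitting probability $\bar h_T(t,x)$ is the minimal positive solution of the backward problem~\eqref{eq:ancientbis}. After time reversal, $h_T$ solves~\eqref{26apr2312} with the spherical boundary $r = b_{\abs{t}}$ for $t < -T$, and we want $h_T(0,0) \to 0$ as $T \to \infty$. Because $h_T$ is minimal, it suffices to exhibit a nonnegative supersolution $\bar{h}$ of~\eqref{eq:hitting} that is $\geq 1$ on the boundary $r = b_{\abs{t}}$, $t<-T$, and whose value at $(0,0)$ tends to zero. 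Since $U = S^{d-1}$, we may take $\bar h$ radial, and the angular term disappears.

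\textbf{Step 1: the supersolution.} The natural object is the linearization. In the moving coordinate $x = b_{\abs{t}} - r$, a radial function $\e^{-x} v$ satisfies
\begin{equation*}
\partial_t v = \partial_x^2 v + \Big[\frac{d-1}{r} - \frac{d-1}{2\abs{t}} - \frac{1}{2\abs{t}} + \dot\beta_{\abs{t}}\Big]\partial_x v\text{-type terms} + \Big[\frac{1}{2\abs{t}} - \dot\beta_{\abs{t}}\Big]v + \text{(drift corrections)}.
\end{equation*}
We discard $-h^2$, since for a supersolution it only helps. For $x\ge 0$ and $r \le 2\abs{t}$ the radial drift $\frac{d-1}{r}$ has an unfavorable sign, so we choose $v$ increasing in $x$, which makes that drift helpful, or we correct it by a bounded shift in the manner of $\xi$ in~\eqref{eq:mismatch}.

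We then mimic Lemma~\ref{lem-26apr2002}. We set $v = C\,\lambda(t)\,[\,x\text{-linear part} + 1\,]$ with $\lambda(t) = \abs{t}^{-1/2}\e^{\beta_{\abs t}}$ absorbing the zeroth-order growth, and we solve the linear problem with boundary value $1$ at $x=0$ started at $t=-T$. The Duhamel estimates~\eqref{eq:upper-0} and~\eqref{eq:zeta-error} then give, near $t = -\sqrt{\,\cdot\,}$ scales,
\begin{equation*}
v(t,x) \lesssim 1 + x\,\e^{\beta_{\abs t}}\abs{t}^{-1/2}\int_{T'}^{\infty}\frac{\dn r}{r\e^{\beta_r}},
\end{equation*}
with the tail integral small by~\eqref{eq:necessary}. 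This is where the general drift hypothesis~\eqref{26may516} in Lemma~\ref{lem-26apr2002} is designed to be reused.

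\textbf{Step 2: reaching $(0,0)$.} Here the plan is to connect the boundary layer to the origin. For $t \in [-T/2, 0]$ we stop using the moving frame. We bound $h_T(0,0)$ by McKean's formula, exactly as in~\eqref{eq:McKean-1} but in the upper direction:
\begin{equation*}
h_T(0,0) \le \E\sum_{p\in\m{P}_{s}} h_T(-s,X_s(p)),
\end{equation*}
where $h_T(-s,\cdot)$ is controlled by the supersolution, $\lesssim \e^{-x}(1+x)\varepsilon_T$ with $\varepsilon_T \to 0$. The many-to-one lemma turns the right side into a Gaussian computation, $\E\sum_p (1+x_p)\e^{-x_p}$ with $x_p = b_s - R_s(p)$. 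One has to restrict to the event that no particle is already beyond $b_s$; a standard first-moment bound on $\max R$ shows it fails with probability $o(1)$ when $s\to\infty$. The Gaussian computation yields
\begin{equation*}
\e^{-\beta_s}s^{-(d-2)/2}\cdot s^{(d-2)/2}\cdot O(s^{1/2}) \cdot \varepsilon_{T}\cdot s^{-1/2}\text{-type},
\end{equation*}
which is bounded by $C\varepsilon_T$. Alternatively, and more cleanly, we can simply note that $\P[E_T] \le \P[\exists\, t\in[T,\infty)\text{ crossing}]$ and sum over dyadic blocks $[2^k,2^{k+1}]$. In each block, the expected number of particles that are near $b_t$ while staying below the curve beforehand is, by ballot-type estimates, $\asymp \int_{2^k}^{2^{k+1}} \frac{\dn r}{r\e^{\beta_r}}$. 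Summing over $k$ gives the tail of~\eqref{eq:necessary}, which tends to zero. I would attempt this first-moment ballot route first, using the PDE supersolution only to justify the barrier estimate.

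\textbf{Main obstacle.} The main obstacle is the first-moment bound with the correct ballot factor. A naive first moment gives $\e^{-\beta}$ times an extra $\log$ loss, or loses the $1/r$ factor entirely. The gain comes from requiring the path to stay below $b$ up to the crossing time, which produces the linear factor $x$ and the $t^{-3/2}$ ballot decay. To get it rigorously, I would use a radial Bessel-type reduction. The radial part of $d$-dimensional Brownian motion has drift $\frac{d-1}{r}$, and removing it produces exactly the $\frac{d-1}{2}\log t$ correction that combines with the $\frac{d-1}{2}\log t$ from the angular volume. This reduces the problem to a 1D Brownian motion staying below the curve $2t - \frac12\log t + \beta_t$. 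The Duhamel/energy estimate of Lemma~\ref{lem-26apr2002}, applied with the general $\kappa$ satisfying~\eqref{26may516}, gives precisely this barrier probability, which is the bound I expect to need.
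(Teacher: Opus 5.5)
There is a genuine gap. It sits exactly where you place the ``main obstacle'', and the concrete steps you do write down would fail.

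\textbf{Step 2 does not give $C\varepsilon_T$.} The constant part of your profile cannot carry a small factor: your own Step~1 bound is $1+x\cdot(\text{small})$, and the $1$ is forced by $h_T=1$ on the barrier. Moreover, many-to-one gives
$\E\sum_{p\in\m{P}_s}\e^{-[b_s-R_s(p)]}\tbf{1}\{R_s(p)<b_s\}\asymp \e^{2s-b_s}s^{\frac{d-1}{2}}=s^{1/2}\e^{-\beta_s}$.
This is because $\E\,\e^{|\sqrt2 B_s|}\asymp s^{\frac{d-1}{2}}\e^{s}$, and the tilted mass sits at $R\approx 2s\pm\sqrt s$, so the cut at $b_s$ removes only a constant fraction. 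The extra $s^{-1/2}$ in your count is the Seneta--Heyde gain. It holds for the typical size of the critical additive martingale, not for its mean. Since \eqref{eq:necessary} allows e.g.\ $\beta_s=2\log\log s$, your bound diverges. No untruncated first moment from the time-$0$ BBM can work, because at time $T$ that BBM is only $\m{O}(\log T)$ behind $b_T$.

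\textbf{Step 1 also fails as written.} Take any profile decreasing in $x$, as $\e^{-x}v$ is for $v$ of linear growth. Then the mismatch $\frac{d-1}{r}-\frac{d-1}{2\abs{t}}$ has the bad sign on all of $r<2\abs{t}$. It is of size $\abs{t}^{-1}$ (not time-integrable) where $r\asymp\abs{t}$, and it blows up as $r\to0$. A bounded shift like $\xi$ only absorbs an integrable mismatch; that is why Section~\ref{sec:excursions-proof} needs it only where $r>2\abs{t}$.

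\textbf{The dyadic ballot route is left unproved.} It may be viable (it is Hu's strategy), but its content is missing. You would need a sharp ballot bound for the radial process below a curved barrier. You would also need to handle that in $E_T$ lineages are constrained only after $T$. So you must either start the barrier at time $\m{O}(1)$ and separately control re-crossings by descendants of earlier crossers, or condition on a good configuration at time $T$.

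\textbf{What the paper does instead.} Its proof supplies two ideas you are missing.

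First, a time-shift reduction. It picks $\ti\beta$ with the same hypotheses and $\beta-\ti\beta\to\infty$. Since $\ti b$ is Lipschitz, a time-shifted copy of $\ti b$ lies below $b$ after some $T_\eps$. Hence $h_{T_\eps}(0,0)$ is at most the probability that a BBM started at the origin at time $\tau_\eps$ ever hits $\ti b$. It then suffices to show this probability vanishes as the starting time tends to infinity. Now the starting point is about $2\tau_\eps$ from the barrier, rather than $\m{O}(\log T)$.

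Second, a superposition of planar waves instead of a radial profile. Let $v$ be the minimal ancient solution of \eqref{eq:upper-heat}. It is bounded by $C(1+y)$ via Lemma~\ref{lem-26apr2002} with $T=\infty$ and a new $\kappa$; this is Lemma~\ref{lem:upper-heat}, and it is where your barrier estimate enters. Then
$W(t,x)=\int_{S^{d-1}}\abs{t}^{\frac{d-1}{2}}\e^{-(b_{\abs{t}}-\theta\cdot x)}v(t,b_{\abs{t}}-\theta\cdot x)\,\dn\theta$
solves $\partial_tW=\Delta W+W$ exactly in the whole ball, so no radial drift ever appears. The cap of angular radius $\abs{t}^{-1/2}$ around $x/\abs{x}$ gives $W\gtrsim1$ on $\abs{x}=b_{\abs{t}}$. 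This is the precise form of your ``$\frac{d-1}{2}\log t$ from the angular volume''. By comparison $h\lesssim W$, and $W(t,0)\lesssim\abs{t}^{\m{O}(1)}\e^{-2\abs{t}}\to0$.
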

Recall from Lemma~\ref{lem:ancientbis} that the probability
\begin{equation*}
  \bar h_T(t,x):=\P_{t,x}[E_T(S^{d-1})]
\end{equation*}
is the minimal positive solution to 
the global-in-time backward problem \eqref{eq:ancientbis}:
\begin{equation*}
  \begin{cases}
    \partial_t\bar h_T+\Delta \bar h_T+\bar h_T-\bar h_T^2=0, & t\in\R, x\in\m{A}^\cc(t)\\
    \bar h_T(t,x)=1, & t>T,~x\in\m{A}(t).
  \end{cases}
\end{equation*}
In the present case, the set $\m{A}(t)$ is defined simply as 
\begin{equation*}
  \m{A}(t)=\{(R,\theta)\in\R^d:~R=b_t\},
\end{equation*}
without any restriction on the angle, and 
with the same convention \eqref{26apr2306}:
\begin{equation*}
  \m{A}(t)=\emptyset \For t<T.
\end{equation*}
Flipping the direction of time and using the radial symmetry of $\bar h_T(t,x)$, we can set
\begin{equation*}
  h_T(t,r)=\bar h_T(-t,x), \quad |x|=r,
\end{equation*}
and arrive at \eqref{eq:hitting} 
\begin{equation*}
  \begin{cases}
    \partial_t h_T = \partial_r^2 h_T + \frac{d-1}{r} \partial_r h_T   + h_T - h_T^2, & t>-T,~r>0 \text{ or } t \leq -T, r \neq b_{-t},\\
    h_T(t,r) = 1, & t<-T,~r=b_{-t}.
  \end{cases}
\end{equation*}
To prove Proposition~\ref{prop:necessary}, we need to show that  
\begin{equation}
  \label{26may414}
  \lim_{T\to+\infty}h_T(0,0)=0.
\end{equation}
We first show that this follows from a slightly different result for a modified threshold.

If $\beta$ satisfies \eqref{eq:beta-dot} and \eqref{eq:necessary}, one can readily check that there exists $\ti\beta_s$ satisfying the same conditions such that
\begin{equation*}
  \beta_s - \ti \beta_s \to \infty \As s \to \infty.
\end{equation*}
We denote the corresponding radius by 
\begin{equation*}
  \tilde b_s=2s+\frac{d-2}{2}\log s+\tilde\beta_s,
\end{equation*}
so
\begin{equation}
  \label{eq:divergence}
  b_s - \ti b_s \to \infty \As s \to \infty.
\end{equation}
We study the auxiliary function $\tilde h$ that is the minimal ancient solution of 
\begin{equation}
  \label{26may412}
  \begin{cases}
    \partial_t \tilde h = \partial_r^2 \tilde h + 
    \frac{d-1}{r} \partial_r \tilde h + \tilde h - \tilde h^2, & t\leq -1, r < \tilde{b}_{-t},\\
    \tilde h(t,r) = 1, &r=\tilde{b}_{-t}.
  \end{cases}
\end{equation}
Note that \eqref{26may412} does not depend on $T$, but the threshold $\tilde b_t$ moves slower than $b_t$.
The function $\tilde h(t,x)$ is the probability that a BBM started at time $\abs{t}$ and position $x\in\R^d$  hits the threshold $s \mapsto \ti b_s$ at some time after $\abs{t}$.

We claim that in order to prove \eqref{26may414}, it suffices to show that
\begin{equation}
  \label{26may416}
  \hbox{$\tilde h(t,0) \to 0$ as $t \to -\infty$.}
\end{equation}
Indeed, assume that \eqref{26may416} holds, fix $\eps > 0$, and take $\tau_\eps\gg 1$ such that 
\begin{equation}
  \label{26may422}
  \tilde h(-\tau_\eps, 0) \leq \eps.
\end{equation}
Consider the time-shifted curve 
\begin{equation*}
  \h b_s^{(\eps)} \coloneqq \ti b_{\tau_\eps + s}.
\end{equation*}
Note that
\eqref{eq:beta-dot} implies in particular that $\ti b$ is uniformly Lipschitz, so
\begin{equation*}
  |\h b^{(\eps)}_s-\ti b_s| = \abss{\ti b_{s + \tau_\eps} - \ti b_s}\leq C(\eps) \ForAll s \geq 1.
\end{equation*}
By \eqref{eq:divergence}, it follows that there exists $T_\eps \gg 1$ such that 
\begin{equation*}
  b_s \geq \h b_s^{(\eps)} \ForAll s \geq T_\eps.
\end{equation*}
Therefore, the probability $h_{T_\eps}(0,0)$ that the BBM started from $t = 0$ and $x = 0$ hits~$b_s$ after time $T_\eps$ 
is no larger than the probability that it hits $\hat b_s^{(\eps)}$ at all.
Shifting in time, the latter equals the probability $\tilde h(-\tau_\eps, 0)$ that the BBM started from $t=\tau_\eps$ and $x=0$ hits the curve $\tilde b_s$.
Using \eqref{26may422}, we deduce that
\begin{equation*}
  h_{T_\eps}(0,0)\le \tilde h(-\tau_\eps, 0)\le \eps.
\end{equation*}
This proves \eqref{26may414}.

Thus Proposition~\ref{prop:necessary} follows from \eqref{26may416}, and this is what we prove in the rest of this section.
We will drop the tilde in the notation for $\tilde h$, $\tilde b_t$ and $\tilde\beta_t$, writing \eqref{26may412} as simply
\begin{equation}
  \label{26may423}
  \begin{cases}
    \partial_t h = \partial_r^2 h + 
    \frac{d-1}{r} \partial_r  h + h - h^2, & t \leq -1, r < {b}_{-t},\\
    h(t,r) = 1, &r= {b}_{-t}.
  \end{cases}
\end{equation}

In order to prove  \eqref{26may416}, we will construct a useful supersolution for \eqref{26may423} by taking an isotropic superposition of 1D solutions.
To construct a supersolution, we are free to drop the negative nonlinearity in \eqref{26may423}, 
leaving a linear problem.
After a suitable exponential tilt, our building block is the minimal ancient solution of the following one-dimensional heat equation with a drift:
\begin{equation}
  \label{eq:upper-heat}
  \begin{cases}
    \partial_t v = \partial_y^2 v + \Big(\frac{d-2}{2 \abs{t}} 
    + \dot\beta_{\abs{t}}\Big) \partial_y v + \Big(\frac{1}{2 \abs{t}} - \dot\beta_{\abs{t}}\Big)v, & y > 0,\\
    v(t,0) = 1.
  \end{cases}
\end{equation}
This is very similar to \eqref{eq:shift} and \eqref{eq:z-trial}.
\begin{lemma}
  \label{lem:upper-heat}
  Suppose $\beta$ satisfies \eqref{eq:beta-dot} and \eqref{eq:necessary}.
  Then there exist constants $\tmax \leq -1$ and $C > 0$ such that the minimal positive ancient solution of \eqref{eq:upper-heat} satisfies
  \begin{equation}
    \label{26apr2804}
    v(t, y) \leq C(y + 1) \ForAll t \leq \tmax,\enspace y > 0.
  \end{equation}
\end{lemma}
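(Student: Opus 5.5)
The plan mirrors the Duhamel analysis of Lemma~\ref{lem-26apr2002}, now for an upper bound, exploiting the finite integral \eqref{eq:necessary}. Set $\lambda(t) \coloneqq \abs{t}^{-1/2}\e^{\beta_{\abs{t}}}$ as in \eqref{26apr1620}, so that $\dot\lambda = \big[\frac{1}{2\abs{t}} - \dot\beta_{\abs{t}}\big]\lambda$. The zeroth-order coefficient in \eqref{eq:upper-heat} is exactly $\dot\lambda/\lambda$, but the boundary value $1$ does not scale with $\lambda$. I would therefore write $v = 1 + \ti z$, mimicking \eqref{eq:z-trial}. Then $\ti z$ solves
\begin{equation*}
  \partial_t \ti z = \partial_y^2\ti z + \kappa_+(t)\partial_y\ti z + \frac{\dot\lambda}{\lambda}(\ti z + 1), \qquad \ti z(t,0) = 0,
\end{equation*}
with drift $\kappa_+(t) \coloneqq \frac{d-2}{2\abs{t}} + \dot\beta_{\abs{t}}$. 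This drift points in the opposite direction from $\kappa$ in \eqref{eq:finite-time}. It nevertheless satisfies $\abs{\kappa_+} \le C\abs{t}^{-1}$, so the general-drift version of Lemma~\ref{lem-26apr2002} applies; this is precisely why that lemma avoided relying on \eqref{eq:one-side}. Thus $\zeta$ solving \eqref{eq:Duhamel-piecebis} with drift $-\kappa_+$ obeys the two-sided bound \eqref{eq:zeta-error}, with initial datum $\Phi$ replaced by $1$ (or by $\Phi$ after a harmless comparison with $1 \le \Phi + \tbf{1}_{y\le 1}$ — more simply, I would compare $1$ with a constant multiple of $\Phi(y)+\e^{-y^2}$ and use the same self-similar energy estimate).

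To build a supersolution of the finite-horizon problem started at $t=-T$ with zero data, I would rely on Duhamel as in \eqref{eq:Duhamelbis2}:
\begin{equation*}
  \ti z(t,y) = \lambda(t)\int_{-T}^t \frac{\dot\lambda(s)}{\lambda^2(s)}\,\zeta(t,y;s)\d s,
\end{equation*}
where $\zeta$ now starts from datum $1$ with Dirichlet boundary at $0$. The self-similar error estimate gives $\zeta \le 2\Phi\big(y/\sqrt{t-s+1}\big) \le 2y(t-s+1)^{-1/2}$, exactly as in \eqref{26apr1621}. Using \eqref{26apr2006} and the monotonicity \eqref{26apr2608} yields
\begin{equation*}
  \ti z(t,y) \le C y\,\e^{\beta_{\abs{t}}}\abs{t}^{-1/2}\int_1^{\infty}\frac{\dn r}{r\exp\beta_r}.
\end{equation*}
The integral is now finite by \eqref{eq:necessary}, uniformly in $T$. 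Moreover, \eqref{eq:necessary} together with \eqref{eq:beta-dot} forces $\e^{\beta_r}\abs{r}^{-1/2}$ to be bounded, which I would check either directly or by restricting to $s\le -\abs{t}$ as in \eqref{26apr2608}. Hence $\ti z \le C y$, and $v_T \coloneqq 1 + \ti z \le C(y+1)$ uniformly in $T$ and $t \le \tmax$.

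Finally, the solutions $v_T$ of \eqref{eq:upper-heat} with zero data at $t=-T$ increase as $T \to \infty$ and remain bounded by $C(y+1)$. Their limit is therefore a positive ancient solution, and by the comparison argument of Lemma~\ref{lem:ancientbis} it is the minimal one, which gives \eqref{26apr2804}.

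I expect the main obstacle to be the error estimate for $\zeta$ when the drift has the unfavorable sign and the initial datum is $1$ rather than $\Phi$. The datum $1$ is incompatible with the Dirichlet condition, and it creates a boundary layer at small times that the energy method of \eqref{eq:ss} must absorb. My first attempt would be to start from $\Phi(y/\sqrt{\delta})$ with small $\delta$ as an upper barrier, that is, to shift the effective start time, and then rerun the Gr\"onwall bound using \eqref{eq:g-est}. This only requires $\abs{\kappa_+}\lesssim\abs{t}^{-1}$.
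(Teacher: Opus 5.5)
Your architecture matches the paper's ingredients: the Duhamel representation, the observation that the self-similar error estimate only needs $\abs{\kappa_+}\lesssim\abs{t}^{-1}$, and the use of \eqref{eq:necessary} to bound the $s$-integral uniformly in $T$. However, the estimate everything rests on is not established, and the justification you give for it is false.

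Because you write $v=1+\ti z$ exactly, your Duhamel pieces $\zeta(t,y;s)$ start from datum $1$ with a Dirichlet condition at $y=0$. For these, the bound $\zeta\le 2\Phi\bigl(y/\sqrt{t-s+1}\bigr)$ does \emph{not} hold ``exactly as in \eqref{26apr1621}''. At $t=s$ one has $\zeta\equiv1$ on $y>0$, whereas $2\Phi(y)\to0$ as $y\to0$. The bound \eqref{eq:zeta-error} was proved only for datum $\Phi$, for which $\Phi\bigl(y/\sqrt{t-s+1}\bigr)$ is the exact driftless solution.

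The repairs you sketch do not work as stated. $\Phi(y/\sqrt{\delta})\le1$ lies below the datum, so it is a lower barrier, not an upper one. It could only serve as an approximation from below, with $\delta$-uniform estimates and a monotone limit. Also, $1\le\Phi+\tbf{1}_{y\le1}$ fails for $y>1$.

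The gap is easy to close, and your other idea, $1\le C(\Phi+\e^{-y^2})$, does it once the second piece is treated correctly. By linearity and comparison, $\zeta\le C(\zeta_\Phi+\zeta_{\mathrm{g}})$, where $\zeta_{\mathrm{g}}$ has datum $\e^{-y^2}$. Do not use the energy estimate for $\zeta_{\mathrm{g}}$. Instead, dominate it by the nonnegative whole-line solution with the same datum. That solution is a moving translate of the heat flow of $\e^{-y^2}$, so $\zeta_{\mathrm{g}}\le(t-s+1)^{-1/2}$. Together with \eqref{eq:zeta-error} for $\zeta_\Phi$, this gives $\zeta\lesssim(y+1)(t-s+1)^{-1/2}$. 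That suffices, because the target bound is $C(y+1)$ rather than $Cy$.

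The paper sidesteps the incompatible datum entirely. It keeps the compatible forcing $\Phi(y)$ and takes the ancient solution $z$ (Duhamel with $T=\infty$), which satisfies $z\lesssim y\,\e^{\beta_{\abs{t}}}\abs{t}^{-1/2}$ by \eqref{eq:necessary}. It then shifts and rescales, setting $z_1(t,y)\coloneqq\Phi(1)^{-1}z(t,y+1)$. Since $\Phi(y+1)/\Phi(1)\ge1$ and $z_1(t,0)\ge0$, the function $z_1+1$ is a supersolution of \eqref{eq:upper-heat}. So only the already-proven \eqref{eq:zeta-error} is needed.

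Two minor points. Your $v_T=1+\ti z$ has datum $1$, not $0$, at $t=-T$. You also need not identify its limit as the minimal solution: it is a positive ancient solution, so it dominates the minimal one.
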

\begin{proof}  
  We build an ancient supersolution of \eqref{eq:upper-heat} satisfying the desired bound.
  Recalling $\Phi$ from \eqref{eq:erf}, consider the minimal ancient solution of
  \begin{equation*}
    \begin{cases}
      \partial_t z = \partial_y^2 z - \kappa(t) \partial_y z + \left(\frac{1}{2 \abs{t}} - \dot\beta_{\abs{t}}\right)[z + \Phi(y)], & y > 0,\\
      z(t,0) = 0.
    \end{cases}
  \end{equation*}
  This equation is identical to \eqref{eq:finite-time}, but here we set
  \begin{equation}
    \label{26apr2802}
    \kappa(t) \coloneqq -\frac{d-2}{2 \abs{t}} - \dot\beta_{\abs{t}}.
  \end{equation}
  This differs from our earlier definition \eqref{26apr1602} of $\kappa$, but this has no effect on the analysis, as $\kappa$ still satisfies the hypothesis \eqref{26may516} of Lemma~\ref{lem-26apr2002}.

  The function $z(t,x)$ is given by \eqref{eq:Duhamelbis2} with $T = \infty$:
  \begin{equation*}
    z(t, y) = \lambda(t) \int_{-\infty}^t \abs{s}^{1/2} \e^{-\beta_{\abs{s}}} \left[\frac{1}{2 \abs{s}} - \dot{\beta}_{\abs{s}}\right] \zeta(t, y; s) \d s,
  \end{equation*}
  with $\lambda(t)$ as in \eqref{26apr1620}.
  The functions $\zeta(t, y; s)$ are, as before, solutions to \eqref{eq:Duhamel-piecebis}, with $\kappa$ now given by \eqref{26apr2802}.

  As in the proof of Lemma~\ref{lem-26apr2002}, by~\eqref{eq:zeta-error}, we have
  \begin{equation*}
    \zeta(t, y; s) \leq 2 \Phi\left(\frac{y}{t - s + 1}\right)
  \end{equation*}
  provided $t \leq \tmax \ll -1$.
  Since $\Phi(y) \leq y$, the analogue of \eqref{eq:upper-0} with \eqref{eq:necessary} is
  \begin{equation}
    \label{eq:lin-bd}
    z(t, y) \leq 2y\e^{\beta_{\abs{t}}} \abs{t}^{-1/2}\!\! \int_1^\infty \frac{\dn r}{r \exp \beta_r} \lesssim y\e^{\beta_{\abs{t}}} \abs{t}^{-1/2} \ForAll t \leq \tmax, y > 0.
  \end{equation}

  Since $\Phi$ is increasing and $\dot\beta_{|t|}\ll |t|^{-1}$, the function
  \begin{equation*}
    z_1(t,y)=\Phi(1)^{-1} z(t, y + 1)
  \end{equation*}
  satisfies
  \begin{equation*}
    \begin{cases}
      \partial_t z_1 \ge \partial_y^2 z_1 - \kappa(t) \partial_y z_1 + 
      \Big(\frac{1}{2 \abs{t}} - \dot\beta_{\abs{t}}\Big)(z_1 + 1), & y > 0,\\
      z_1(t,0) = 0.
    \end{cases}
  \end{equation*}
  Hence $z_1 + 1$ is a super-solution of the original problem \eqref{eq:upper-heat}.
  It follows that the minimal positive ancient solution of \eqref{eq:upper-heat} satisfies
  \begin{equation*}
    v(t, y) \leq z_1(t, y) + 1 \leq \Phi(1)^{-1} z(t, y + 1) + 1.
  \end{equation*}
  Then \eqref{26apr2804} follows from \eqref{eq:lin-bd}.
\end{proof}
We now use Lemma~\ref{lem:upper-heat} to prove \eqref{26may416}.
First note that if $v$ solves \eqref{eq:upper-heat} then, undoing the exponential tilt, one can directly check that the function
\begin{equation}
  \label{26may510}
  w(t, y) \coloneqq \abs{t}^{\frac{d-1}{2}} \e^{-y} v(t, y)
\end{equation}
satisfies the one-dimensional problem 
\begin{equation}
  \label{26apr2902}
  \begin{cases}
    \partial_t w = \partial_y^2 w + \dot{b}_{\abs{t}} \partial_y w + w, & y > 0,\\
    w(t,0) = \abs{t}^{\frac{d-1}{2}}.
  \end{cases}
\end{equation}
As a consequence, a direct computation shows that the superposition
\begin{equation}
  \label{eq:superposition-necessary}
  W(t, x) \coloneqq \int_{S^{d-1}} w(t, b_{\abs{t}} - \theta \cdot x) \d \theta, \quad t\leq -1, x\in\R^d, |x|\le b_{|t|}
\end{equation}
satisfies the linear equation 
\begin{equation}
  \label{26may506}
  \partial_t W = \Delta W + W \quad \text{where } t < -1,~ x\in\R^{d},~\abs{x} < b_{\abs{t}}.
\end{equation}
Now fix $x$ with $\abs{x} = b_{\abs{t}}$ and let $\Gamma_x$ denote the ball in $S^{d-1}$ 
of radius $\abs{t}^{-1/2}$ about the point $\h x=x/|x|.$
For $\theta \in \Gamma$, we have
\begin{equation*}
  0 \leq b_{\abs{t}} - \theta \cdot x \lesssim 1.
\end{equation*}
By a boundary Harnack estimate for \eqref{26apr2902}, we deduce that  
\begin{equation}
  \label{26apr2904}
  w(t, b_{\abs{t}} - \theta \cdot x) \gtrsim w(t, 0) = \abs{t}^{\frac{d-1}{2}} \For \theta \in \Gamma_x.
\end{equation}
Since 
\begin{equation*}
  \op{vol}_{S^{d-1}}(\Gamma_x) \asymp \abs{t}^{-\frac{d-1}{2}}
\end{equation*}
and $w > 0$ (by the maximum principle), we obtain from \eqref{eq:superposition-necessary} and \eqref{26apr2904} that 
\begin{equation}
  \label{26may508}
  W(t, x) \gtrsim 1 \quad \text{for any } x\in\R^d \text{ with } |x|=b_{|t|}.
\end{equation}
It follows from \eqref{26may506} and \eqref{26may508}
that  $CW$ is a super-solution of \eqref{26may423} for $C \gg 1$.
By the comparison principle, we deduce that 
\begin{equation}
  \label{26may512}
  h(t, x) \lesssim W(t, x) \quad \text{for } \abs{x} \leq b_{\abs{t}}.
\end{equation}
By \eqref{26apr2804}, \eqref{26may510} and \eqref{26may512}, 
for $t \leq \tmax$ we have
\begin{equation*}
  h(t, 0) \lesssim \abs{t}^{\frac{d-1}{2}} (b_{\abs{t}} + 1) 
  \e^{-b_{\abs{t}}} = \abs{t}^{\m{O}(1)} \e^{-2\abs{t}}\to 0~\hbox{ as $t\to-\infty$}.
\end{equation*}
This proves \eqref{26may416} and finishes the proof of Proposition \ref{prop:necessary}.
We can now prove the negative part of Theorem~\ref{thm:LIL}.
\begin{proof}[Proof of second part of Theorem~\ref{thm:LIL}]
  Consider a BBM started from the origin at time $0$.
  Let $A$ denote the event that there is a sequence of times $(t_n)_{n \in \N}$ tending to infinity at which a particle has radius $b_{t_n}$.
  Then
  \begin{equation*}
    A = \bigcap_{T \geq 1} E_T(S^{d-1}).
  \end{equation*}
  By Proposition~\ref{prop:necessary}, $\P(A) = 0$.
  If $F$ is the event described in the second part of Theorem~\ref{thm:LIL}, then we can write $F = A^\cc \setminus B$, where $B$ is the event that there exists a particle whose descendants remain forever outside the radius $b$.
  However, Mallein~\cite{Mallein15} showed that the most distant particle typically has radius $2t + \frac{d-4}{2} \log t + \m{O}(1)$, which is much less than $b_t$.
  Hence $\P(B) = 0$ and $\P(F) = \P(A^\cc) = 1$.
\end{proof}

\section{Total mass}
\label{sec:total}

The remaining sections have a somewhat different flavor, as they are concerned with consequences of Theorem~\ref{thm:LIL} rather than its proof.
Throughout, we will assume that $\beta$ satisfies the hypotheses of Theorem~\ref{thm:LIL} and \eqref{eq:integral-condition}.
Given an open sector $U \subset S^{d-1}$ and $T \geq 2$, Theorem~\ref{thm:LIL} provides a first time $\h t \geq T$ and particle $\h p \in \m{P}_{\h t}$ that crosses the radial threshold $r = b_{\h t}$ through the sector $U$ after time $T$.
We can then define the modified reference frame
\begin{equation}
  \label{26may520}
  \h{b}_t \coloneqq 2t + \frac{d-1}{2} \log t - \frac{1}{2} \log \h t + \beta_{\h t}, \quad t\ge \h t
\end{equation}
as in \eqref{eq:b-hat}, and the restricted approximate derivative martingale measure
\begin{equation}
  \label{26may518}
  \h{Y}_t(\dn\theta) \coloneqq \pi^{\frac{d-1}{4}} \sum_{p \in \h{\m{P}}_t} ([\h{b}_t - R_t(p)]_+ + 1) \e^{-[\h{b}_t - R_t(p)]} \delta_{\Theta_t(p)}(\dn\theta), \quad \text{for } t \geq \h t.
\end{equation}
We recall that $\h{\m{P}}_t$ denotes the descendants of $\h p$ at time $t$.
Also, $\m{G}_{U,T}$ denotes the $\sigma$-algebra containing all information about particles until they first cross the radial threshold $b$ through the sector $U$ after time $T$.

Applying McKean's formula~\cite{McKean75} to \eqref{26may518}, we can express the Laplace transform 
of the total mass $\h{Y}_t(S^{d-1})$ conditional on $\m{G}_{U,T}$ in terms of the Fisher--KPP equation.
For each $\lambda > 0$ and $t \geq \h t$, let $w(s,r;t,\lambda)$, be the  radial solution to the backward Fisher--KPP equation
\begin{equation}
  \label{eq:radial-KPP}
  \begin{cases}
    -\partial_s w = \partial_r^2 w + \frac{d-1}{r} \partial_r w + w(1 - w), \quad s<t,\\[4pt]
    w(s = t, r; t, \lambda) = 1 - \exp\bigl(-\pi^{\frac{d-1}{4}}\lambda [(\h b_t - r)_+ + 1] \e^{-[\h b_t - r]}\bigr).
  \end{cases}
\end{equation}
Then, the Laplace transform of $\h {Y}_t(S^{d-1})$ has the form 
\begin{equation}
  \label{eq:McKean-total}
  \E \exp[-\lambda \h{Y}_t(S^{d-1}) \mid \m{G}_{U,T}] = 1-w(\h t, b_{\h t}\,; t, \lambda).
\end{equation}
We condition on $\m{G}_{U,T}$ so we can take $t \geq \h t$ outside the expectation.

We bound $w$ from below using a one-dimensional traveling wave.
Let $\phi$ be the 1D Fisher--KPP traveling wave of speed $2$ connecting $1/2$ to $0$ for the nonlinearity $f(u) = u(1 - 2u)$, with $\phi(0) = 1/4$:
\begin{equation}
  \label{26may602}
  -2\phi'=\phi''+\phi(1-2\phi), \quad \phi(-\infty)=\frac{1}{2},\enspace \phi(+\infty)=0, \enspace\phi(0)=\frac{1}{4}.
\end{equation}
The following lemma gives a subsolution for \eqref{eq:radial-KPP}.
\begin{lemma}
  \label{lem:wave-lower}
  There exists a constant $\ubar{C}(d) > 0$ and a uniformly bounded $\m{C}^1$ function $\xi \colon [2, t] \to [0, \infty)$ such that for all $s \in [\ubar{C}, t]$ and $\lambda \geq 1$,
  \begin{equation}
    \label{eq:sub}
    \ubar{w}(s, r; t, \lambda) \coloneqq \phi\bigl(\h b_s + \xi_s - \frac{1}{2} \log \lambda - r\bigr)
  \end{equation}
  satisfies 
  \begin{equation}
    \label{26may625}
    -\partial_s \ubar w \leq \partial_r^2 \ubar w + \frac{d-1}{r} \partial_r \ubar w + \ubar w (1 - \ubar w).
  \end{equation}
\end{lemma}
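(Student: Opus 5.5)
Throughout, write $\eta \coloneqq \h b_s + \xi_s - \frac12\log\lambda - r$, so that $\ubar w = \phi(\eta)$. The plan is a direct computation. The backward time derivative is
\[
-\partial_s \ubar w = -(\dot{\h b}_s + \dot\xi_s)\phi'(\eta),
\]
and the spatial derivatives are $\partial_r \ubar w = -\phi'(\eta)$ and $\partial_r^2 \ubar w = \phi''(\eta)$. Substituting these and using the wave equation \eqref{26may602} in the form $\phi'' = -2\phi' - \phi + 2\phi^2$, inequality \eqref{26may625} becomes
\begin{equation*}
  -(\dot{\h b}_s + \dot\xi_s)\phi' \leq -2\phi' - \phi + 2\phi^2 - \frac{d-1}{r}\phi' + \phi - \phi^2,
\end{equation*}
which rearranges to
\begin{equation*}
  \Big(2 - \dot{\h b}_s - \dot\xi_s + \frac{d-1}{r}\Big)\phi'(\eta) \leq \phi(\eta)^2 .
\end{equation*}

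We have $\phi' < 0$ and $\dot{\h b}_s = 2 + \frac{d-1}{2s}$. So the inequality reads
\begin{equation*}
  |\phi'(\eta)|\Big(\frac{d-1}{r} - \frac{d-1}{2s} - \dot\xi_s\Big) \geq -\phi(\eta)^2 .
\end{equation*}
The nonlinear slack $\phi^2$ (coming from using $u(1-2u)$ instead of $u(1-u)$) is available on the right. Since $\phi^2 \ge 0$, it suffices to make the bracket nonnegative wherever it matters. The bracket is nonnegative whenever $r \le 2s$ and $\dot\xi_s \le (d-1)\big(\frac1r - \frac1{2s}\big)$. For example, take $\dot\xi_s \le 0$: the problem statement requires $\xi \ge 0$, but a decreasing $\xi$ is still fine provided it stays bounded and nonnegative.

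There are two delicate regions.

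\textbf{Region $r > 2s$.} There $\frac{d-1}{r} - \frac{d-1}{2s} \ge -\frac{d-1}{2s}\cdot\frac{r-2s}{r}$. Here $\eta = \h b_s + \xi_s - \frac12\log\lambda - r \le \frac{d-1}{2}\log s + C$, so $r - 2s \lesssim \log s$ and the deficit is $\m{O}(s^{-2}\log s)$. This is integrable, and I will absorb it with $\dot\xi_s \coloneqq -(d-1)\big(\frac{1}{2s} - \frac{1}{\bar r_s}\big)_+$ where needed. More precisely, I mimic \eqref{eq:mismatch} with the sign chosen so the drift becomes favorable. The boundedness of $\xi$ follows because the integrand is $\m{O}(s^{-2}\log s)$. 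If the sign works out the other way, I instead define $\xi$ increasing with the same integrable rate; either way $\xi$ is bounded.

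\textbf{Region where $r$ is small.} Here $\frac{d-1}{r}$ is huge but positive, which only helps. One must also handle $\lambda \ge 1$ making $\frac12\log\lambda$ large. This shift moves the front inward, which only makes $r$ smaller relative to $2s$, so the favorable sign persists.

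The main obstacle is the region $\eta \to +\infty$ (ahead of the front, with $r$ slightly above $2s$), where $\phi^2 \ll |\phi'|$ and no nonlinear slack is available. There I plan to use the precise tail $\phi(\eta) \sim C\eta e^{-\eta}$ together with $|\phi'| \asymp \phi$, and control the negative drift purely through $\dot\xi_s$. The constraint $s \ge \ubar C$ is used so that $\frac{d-1}{2}\log s$, $\log\lambda$ and the $\m{O}(1)$ corrections are dominated, making the integrable choice of $\xi$ sufficient uniformly in $t$ and $\lambda \ge 1$.
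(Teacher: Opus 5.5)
Your reduction is correct and matches the paper's \eqref{eq:nonlin}. With $\eta=\h b_s+\xi_s-\frac12\log\lambda-r$, the claim is $\bigl[\dot\xi_s+(d-1)\bigl(\frac1{2s}-\frac1r\bigr)\bigr]\phi'(\eta)+\phi(\eta)^2\ge0$. Also, $\xi$ must be decreasing with $\xi_t=0$; there is no sign ambiguity, since an increasing $\xi$ only hurts.

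The gap is in the region $r>2s$. From $r>2s$ you get an \emph{upper} bound $\eta\le\frac{d-1}{2}\log s+C$, and you conclude $r-2s\lesssim\log s$. That inverts the implication: an upper bound on $\eta$ says nothing about how large $r$ is. The region $r>2s$ is unbounded, and this matters, because the comparison \eqref{26may626} runs over all $r\le\h b_t\approx 2t\gg 2s$. For $r-2s\gg\log s$, the coefficient $(d-1)\bigl(\frac1{2s}-\frac1r\bigr)$ is of order $1/s$, not $\m{O}(s^{-2}\log s)$. Absorbing it through $\dot\xi$ would force $\dot\xi_s\lesssim-1/s$, hence $\xi_{\ubar{C}}\gtrsim\log(t/\ubar{C})$. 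That is not bounded uniformly in $t$, yet uniform boundedness is required by the statement and used in the proof of Lemma~\ref{lem:total}. So your plan, which never says what happens for large $r$, cannot be completed with $\xi$ alone.

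The missing idea is that far out, the nonlinear slack $\phi^2$ does the work, and the split must be made in $r$. Fix $K>\frac{d-1}{2}$ (the paper takes $K=\frac d2$) and set $\dot\xi_s=-\frac{(d-1)K}{4}s^{-2}\log s$.

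For $r\le 2s+K\log s$, one has $(d-1)\bigl(\frac1{2s}-\frac1r\bigr)\le\frac{(d-1)K\log s}{4s^2}$, so the bracket is $\le0$; this is \eqref{26may610} for $K=\frac d2$. No tail asymptotics of $\phi$ are needed here, so the region you single out as the main obstacle is actually the easy one.

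For $r>2s+K\log s$, use $\h b_s-2s=\frac{d-1}{2}\log s-\frac12\log\h t+\beta_{\h t}$ together with \eqref{eq:beta-bounds}, $\sup\xi<\infty$, and $-\frac12\log\lambda\le0$. These give $\eta\le\bigl(\frac{d-1}{2}-K\bigr)\log s+C\to-\infty$, uniformly in $\h t$, $t$, and $\lambda$. Hence $\phi(\eta)^2\to\frac14$, while the first-order term is at most $\sup|\phi'|\cdot\m{O}(1/s)$. Taking $s\ge\ubar{C}(d)$ then makes the sum nonnegative.

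This is the actual role of $\ubar{C}$. It is also the only role of $\lambda\ge1$, which enters solely through the sign of $-\frac12\log\lambda$. The term $\log\lambda$ itself is not dominated by anything, contrary to your last paragraph.
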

\begin{proof}
  Differentiating \eqref{26may520} we see that
  \begin{equation}
    \label{26may604}
    \dot{\h{b}}_s = 2 + \frac{d-1}{2s}.
  \end{equation}
  We compute, using \eqref{26may602} and \eqref{26may604}, 
  \begin{align}
    \m{N}[\ubar{w}] &\coloneqq \partial_s \ubar w + \partial_r^2 \ubar w + \frac{d-1}{r} \partial_r \ubar w + \ubar w (1 - \ubar w)\nonumber\\
                    &=\Big(2 + \frac{d-1}{2s}+\dot\xi_s	-\frac{d-1}{r}-2\Big)\phi'-\phi(1-2\phi)+\phi(1-\phi)\nonumber\\
                    &= \Big[\dot \xi_s + (d-1)\Big(\frac{1}{2 s} - \frac{1}{r}\Big)\Big] \phi' + \phi^2.\label{eq:nonlin}
  \end{align}
  Let us take  
  \begin{equation*}
    \xi_s=\int_s^t \frac{d(d-1)\log s_1}{8s_1^2}ds_1.
  \end{equation*}
  As we are only interested in $s\ge \h t\ge 2$, we know that $\xi_s$ is uniformly bounded:
  \begin{equation}
    \label{26may614}
    \sup_\tau\xi_\tau<+\infty.
  \end{equation}
  Because $\phi' < 0,$ we have
  \begin{equation}
    \label{26may618}
    \m{N}[\ubar{w}] \geq 0,\
  \end{equation}
  wherever
  \begin{equation}
    \label{26may610}
    \frac{d(d-1)\log s}{8s^2} - (d-1)\left(\frac{1}{2 s} - \frac{1}{r}\right) \geq 0.
  \end{equation}
  One can check directly that \eqref{26may610} holds if  
  \begin{equation*}
    r \leq 2 s + \frac{d}{2} \log s - 1,
  \end{equation*}
  provided $s \gg 1$.

  On the other hand, when 
  \begin{equation}
    \label{26may620}
    r > 2 s + \frac{d}{2} \log s - 1,
  \end{equation}
  we see from \eqref{26may520} that the argument of $\phi$ in \eqref{eq:sub} satisfies
  \begin{align*}
    \h b_s + \xi_s - \frac{1}{2} \log \lambda - r &\leq -\frac{1}{2} \log s - \frac{1}{2} \log \h t + \beta_{\h t} + \xi_s - \frac{1}{2} \log \lambda + 1\\
                                                  &\leq -\frac{1}{2} \log s + \sup_{\tau \geq 2} \left(\beta_\tau - \frac{1}{2} \log \tau\right) + \sup \xi + 1 \to -\infty
  \end{align*}
  as $s \to \infty$.
  Here we used $\lambda \geq 1$, as well as \eqref{eq:beta-bounds} and \eqref{26may614} to ensure the suprema are finite.
  Because $\phi(-\infty) = 1/2 > 0$ in \eqref{26may602}, the second term in \eqref{eq:nonlin} is uniformly positive in this limit.
  In contrast, the first term in \eqref{eq:nonlin} tends to zero in this regime, because $\phi'$ is uniformly bounded.
  It follows that for sufficiently large $s$, \eqref{26may618} holds also in the region \eqref{26may620}.
  That is, there exists $\ubar{C}(d) \gg 1$ such that $\m{N}[w] \geq 0$ for all $r \in \R$ and $s \geq \ubar{C}$.
\end{proof}
\begin{proof}[Proof of Lemma~\ref{lem:total}]
  Recall that if a solution of a forward-in-time Fisher--KPP is near $1$ on a sufficiently large ball
  initially, it will remain near $1$ at the center for all times.
  To see this, one can construct a suitable compactly supported sub-solution.
  The terminal data 
  \begin{equation}
    \label{26may621}
    w(s = t, r; t, \lambda) = 1 - \exp\bigl(-\pi^{\frac{d-1}{4}}
    \lambda [(\h b_t - r)_+ + 1] \e^{-[\h b_t - r]}\bigr)
  \end{equation}
  in the backward-in-time Fisher--KPP equation \eqref{eq:radial-KPP} tends to $1$ locally uniformly in $\h b_t - r$ as $\lambda \to \infty$.
  The aforementioned property implies that
  there exists a constant $C(d) > 0$ such that for all $\lambda \geq C$ and $s \leq t$, we have
  \begin{equation}
    \label{26may624}
    w(s, \h b_t) \geq \frac{1}{2} = \sup \ubar{w}.
  \end{equation}
  In addition, the speed-2 Fisher--KPP traveling wave $\phi$ defined in \eqref{26may602} satisfies
  \begin{equation}
    \label{26may623}
    \phi(a) \lesssim a \e^{-a} \As a \to \infty.
  \end{equation}
  On the other hand, the terminal condition \eqref{26may621} satisfies
  \begin{equation}
    \label{26may622}
    w(s = t, r; t, \lambda) \sim  \pi^{\frac{d-1}{4}}
    \lambda (\h b_t - r) \e^{-(\h b_t - r)} \For r\ll \h b_t.
  \end{equation}
  Comparing \eqref{26may623} and~\eqref{26may622},
  we see that we can choose $C(d) \gg 1$ such that for all $\lambda \geq C(d)$,
  \begin{equation}
    \label{eq:sub-initial}
    w|_{s=t} \geq \ubar{w}|_{s=t} \ForAll r \leq \h b_t.
  \end{equation}
  With ordered boundary and terminal data in \eqref{26may624} and \eqref{eq:sub-initial}, the comparison principle together with \eqref{eq:radial-KPP} and \eqref{26may625}
  implies that 
  \begin{equation}
    \label{26may626}
    w \geq \ubar{w} \ForAll s \in [\ubar{C}, t],r \leq \h b_t.
  \end{equation}

  We first suppose 
  \begin{equation}
    \label{26may630}
    t \geq \h t + \frac{1}{8}\log \lambda + 1
  \end{equation}
  and take
  \begin{equation*}
    s= \h t + \frac{1}{8}\log \lambda.
  \end{equation*}
  We assume $\lambda$ is chosen large enough that $s \geq \ubar{C}$ and $b_{\h{t}} \leq b_s$.
  Then, we can apply \eqref{26may626} at $r=b_{\h t}$ to obtain
  \begin{align}
    w\big(s, b_{\h t}) &\geq \ubar{w}(s, b_{\h t}) = \phi\bigl(\h b_s + \xi_s - \frac{1}{2} \log \lambda - b_{\h t}\bigr)\nonumber\\
                       &= \phi\Big(2s + \frac{d-1}{2} \log s - \frac{1}{2} \log \h t + \beta_{\h t}- 2\h t - \frac{d - 2}{2} \log \h t - \beta_{\h t}+ \xi_s - \frac{1}{2} \log \lambda\Big)\nonumber\\
                       &=\phi\Big(\!-\frac{1}{4}\log\lambda + \frac{d-1}{2} \big[\log(\h t+  \frac{1}{8}\log \lambda) -\log \h t\big]+\xi_s\Big)\label{26may628}
  \end{align}
  Now $\h t \geq 2$ and \eqref{26may614} yield
  \begin{align*}
    -\frac{1}{4}\log\lambda + \frac{d-1}{2} \big[\log(\h t+  &\frac{1}{8}\log \lambda) -\log \h t\big]+\xi_s\\
                                                             &\leq -\frac{1}{4}\log\lambda + \frac{d-1}{2} \log\left(2^{-4} \log \lambda + 1\right) + \sup \xi.
  \end{align*}
  This tends to $-\infty$ as $\lambda \to \infty$, so we can choose $\lambda \geq C$ sufficiently large (and independent of $\h t$) so that \eqref{26may628} yields
  \begin{equation}
    \label{eq:trigger}
    w\big(s, b_{\h t}) \geq \phi(0) = \frac{1}{4}.
  \end{equation}
  Parabolic regularity and \eqref{eq:trigger} imply that there is a constant $c(d) > 0$ 
  such that 
  \begin{equation*}
    w\big(\h t + \tfrac{1}{8}\log \lambda,r) \geq c \quad \text{where } \abss{r - b_{\h t}} \leq 1.
  \end{equation*}
  By the hair trigger effect, the solution of the forward Fisher--KPP equation
  with such data converges to $1$ locally 
  uniformly as time tends to infinity.
  Evolving backward from $s=\h t + ({1}/{8}) \log \lambda$ to $\h t$ accomplishes the same, so 
  \begin{equation}
    \label{eq:unif}
    w(\h t, b_{\h t}) \to 1 \As \lambda \to \infty \text{ uniformly in }t, \h t.
  \end{equation}

  Next, consider the opposite situation to \eqref{26may630}:
  \begin{equation}
    \label{26may632}
    t \leq \h t + \frac{1}{8}\log \lambda + 1.
  \end{equation}
  Then, as in the computation in \eqref{26may628}, we have 
  \begin{equation*}
    \h b_t - b_{\h t} \leq \frac{1}{4} \log \lambda + \m{O}(\log \log \lambda) \leq \frac{1}{2} \log \lambda
  \end{equation*}
  provided $\lambda \geq C \gg 1$.
  Thus, the terminal condition \eqref{26may621} gives
  \begin{align*}
    w(t,b_{\h t} + r; t, \lambda)&= 1 - \exp\bigl(-\pi^{\frac{d-1}{4}}
                                   \lambda [(\h b_t -b_{\h t}- r)_+ + 1] \e^{-[\h b_t -b_{\h t}- r]}\bigr)\\
                                 &\geq 1 - \exp\left(-\pi^{\frac{d-1}{4}} \sqrt{\lambda} \e^r\right).
  \end{align*}
  In particular, $w|_{s = t}$ is close to $1$ on a large ball centered at $b_{\h t}$ if $\lambda$ is sufficiently large.
  As indicated earlier, this implies that $w$ will remain close to $1$ at all earlier times, including at $\h t$.
  Here ``close'' and ``large'' are determined by $\lambda$ alone.
  Thus \eqref{eq:unif} also holds in the regime \eqref{26may632}, and is therefore true unconditionally.

  By \eqref{eq:McKean-total}, we deduce that
  \begin{equation*}
    \E \exp[-\lambda \h Y_t(S^{d-1}) \mid \m{G}_{U,T}] \to 0 \quad \text{as } \lambda \to \infty
  \end{equation*}
  uniformly in $t$.
  The conclusion of Lemma~\ref{lem:total} follows.
\end{proof}

\section{Localization: the proof of Proposition~\ref{prop:local}}
\label{sec:local}

In this section, we prove Proposition~\ref{prop:local}, which states that most of the mass of $\h Y$ remains confined to the vicinity of $\h \theta$, the angle of the particle $\h p$ at the time $\h t \geq T$ of its excursion.
Precisely, let $\Gamma$ be the ball in $S^{d-1}$ centered at $\h \theta$ of radius
\begin{equation}
  \label{eq:Gamma-rad}
  \rho_\Gamma \coloneqq A \h t^{-1/2} \log^{1/2} \h t,
\end{equation}
for a constant $A$ to be determined.
We must show that for all $K > 0$, there exist $A(K),C(K) > 0$ such that for all $T \geq C$ and~$t \geq \h t$, we have
\begin{equation}
  \label{26may702}
  \P\big[\h Y_t(\Gamma^\cc) \geq \h t^{-K} \mid \m{G}_{U,T}\big] \leq T^{-K}.
\end{equation}
We know from Lemma~\ref{lem:total} that the total mass of $\h Y_t$ is uniformly positive with high probability.
Here, we wish to show that $\h Y_t$ places most of its mass in $\Gamma$ as $t \to \infty$.
As in the previous section, we condition on $\m{G}_{U,T}$ so we can take $t \geq \h t$.

We once again use McKean's formula to express the problem in terms of the Fisher--KPP equation.
This is essentially identical to \eqref{eq:radial-KPP} and \eqref{eq:McKean-total}, except now we only look at the mass in $\Gamma^\cc$ rather than the whole sphere.
Given $t > \h t$ and $x = (r, \theta) \in \R^d$, define the profile
\begin{equation}
  \label{eq:lacuna-data}
  \phi(x; t) = \phi(r, \theta; t) \coloneqq \pi^{\frac{d-1}{4}} [(\h b_t - r)_+ + 1] \e^{-(\h b_t - r)} \tbf{1}_{\Gamma^\cc}(\theta).
\end{equation}
Then for all $\lambda > 0$, we have
\begin{equation*}
  \E \exp[-\lambda \h Y_t(\Gamma^\cc) \mid \m{G}_{U,T}] = \E\Big[\prod_{p \in \h{\m{P}}_t} \exp\big[\!-\lambda \phi\big(X_t(p); t\big)\big] \;\big|\; \m{G}_{U,T}\Big].
\end{equation*}
As in \eqref{eq:McKean-total}, McKean's formula implies that
\begin{equation}
  \label{eq:McKean-again}
  1 - \E \exp[-\lambda \h Y_t(\Gamma^\cc) \mid \m{G}_{U,T}] = u(\h t, b_{\h t}, \h\theta; t, \lambda)
\end{equation}
for $u(s,r,\theta; t, \lambda)$ solving a backward Fisher--KPP problem in $(-\infty, t] \times \R^d$:
\begin{equation}
  \label{eq:backward-multi}
  \begin{cases}
    \partial_s u + \Delta u + u (1 - u) = 0,\\
    u(s = t, x; t, \lambda) = 1 - \exp[-\lambda \phi(x; t)]
  \end{cases}
\end{equation}

To prove \eqref{26may702}, we must show that $u$ is small at the center $\h \theta$ of $\Gamma$ when $\lambda = \h t^K$.
To control $u$ from above, we take superpositions of 1D linearized solutions with Dirichlet boundary data.
We work in a frame moving at speed $2$ with logarithmic correction of the form
\begin{equation}
  \label{eq:drift}
  g(s) \coloneqq \frac{d-1}{2} \log s.
\end{equation}
In the absence of $g$, one can write down simple solutions of the linear problem, which is the 1D heat equation modulo an exponential tilt.
We first show that the modest drift $g$ does not spoil things.
\begin{lemma}
  \label{lem:drift-heat}
  Let $v(s, y; t)$ solve 
  \begin{equation*}
    \begin{cases}
      \partial_s v + \partial_y^2  v + \dot{g} \partial_y  v = 0, & s \in (1,t), y > 0,\\
      v(s = t, y) = y,\\
      v(s,0) = 0.
    \end{cases}
  \end{equation*}
  Then there exists $\smin(d) > 0$ such that 
  \begin{equation}
    \label{26may710}
    y \leq v(s, y; t) \leq 2y \ForAll s \in [\smin, t], y \geq 0.
  \end{equation}
\end{lemma}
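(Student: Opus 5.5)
The plan is to sandwich $v$ between $y$ and $y$ plus a small multiple of $y$. The lower bound comes from comparison with $y$ itself. The upper bound comes from a Duhamel representation of the correction $v-y$, which reduces it to a linear-in-$y$ survival estimate of the kind already obtained in the proof of Lemma~\ref{lem-26apr2002}.

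\emph{Lower bound.} Reverse time with $\tau \coloneqq t-s$, so $v$ solves the forward problem $\partial_\tau v = \partial_y^2 v + \dot g\,\partial_y v$ with initial data $y$ and Dirichlet data $0$ at $y=0$. The function $y$ satisfies $\partial_\tau y - \partial_y^2 y - \dot g\,\partial_y y = -\dot g \le 0$ because $\dot g = \frac{d-1}{2s} > 0$. It also has the same initial and boundary data, so it is a subsolution. The comparison principle gives $v \ge y$ for all $s \le t$. The functions involved grow at most linearly, so the maximum principle on the half-line applies.

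\emph{Upper bound, Duhamel step.} Set $q \coloneqq v - y$. Then $q$ solves the same backward equation with zero terminal and boundary data and forcing $\dot g$:
\begin{equation*}
  \partial_s q + \partial_y^2 q + \dot g\,\partial_y q = -\dot g .
\end{equation*}
Duhamel's formula gives
\begin{equation*}
  q(s,y) = \int_s^t \dot g(r)\, \sigma(s,y;r) \d r .
\end{equation*}
Here $\sigma(\anon,\anon;r)$ solves the homogeneous backward problem on $(s,r)$ with terminal data $1$ at time $r$ and Dirichlet data at $0$. Equivalently, $\sigma$ is the probability that the drifted Brownian motion started at $y$ at time $s$ avoids $0$ until time $r$. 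Trivially $\sigma \le 1$. The key input is
\begin{equation*}
  \sigma(s,y;r) \lesssim \frac{y}{\sqrt{r-s}} \quad \text{for } r \ge s+1 .
\end{equation*}
Granting this, we split the integral.
\begin{itemize}
\item On $[s,2s]$ we have $\dot g \lesssim 1/s$. The piece $r\in[s,s+1]$ gives $\int_s^{s+1} \dot g\,\sigma \d r \lesssim 1/s$. The remaining piece gives $\frac{1}{s}\int_{s+1}^{2s} \frac{y \d r}{\sqrt{r-s}} \lesssim y\, s^{-1/2}$.
\item On $[2s,t]$ we use $r - s \ge r/2$, which gives $\int_{2s}^\infty \frac{y}{r^{3/2}} \d r \lesssim y\, s^{-1/2}$.
\end{itemize}

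\emph{Small $y$.} The $1/s$ contribution from $r\in[s,s+1]$ is not proportional to $y$, so it needs separate handling. For this piece I would use $\sigma(s,y;r) \le \min\{1, C y/\sqrt{r-s}\} + C y$. Alternatively, one can absorb this piece by shifting the splitting point to $s+1$ and using that $q(s,\anon)$ vanishes at $y=0$. By parabolic regularity its $y$-derivative there is bounded by the same $\lesssim s^{-1/2}$. Altogether $q \le C y\, s^{-1/2} \le y$ once $s \ge \smin(d) \coloneqq C^2$, which yields $v \le 2y$.

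\emph{Main obstacle.} The main difficulty is the linear survival bound for the \emph{drifted} process, since a positive drift aids survival. I expect to take it from the self-similar analysis in the proof of Lemma~\ref{lem-26apr2002}. After time reversal, the present drift is a drift of the form $-\kappa\,\partial_x$ with $\kappa = -\dot g$, and $\abs{\kappa}\lesssim 1/\abs{t}$. The energy estimate there only used $\abs{g}$, so hypothesis~\eqref{26may516} in absolute value is what is needed. That analysis gives \eqref{eq:zeta-error} and the derivative bound $\abs{\partial_x \zeta}\le 2(t-s+1)^{-1/2}$ for data $\Phi$. For data $1$, first run one unit of time. After that the solution is bounded by a constant multiple of data of the form $\Phi(\anon/\sqrt{1})$, by comparison with the driftless Dirichlet solution and a bounded Girsanov factor on a unit time interval. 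Then apply \eqref{eq:zeta-error}, using $\Phi(\eta)\le\eta$, to get $\sigma \lesssim y/\sqrt{r-s}$.
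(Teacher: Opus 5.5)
Your proof is correct, but it takes a genuinely different route from the paper's. The paper never estimates survival probabilities of the drifted process. After reversing time, it writes $w=\tilde v-y$, which solves $\partial_s w=\partial_y^2 w+\kappa(\partial_y w+1)$ with $\kappa>0$. It then compares $w$ with $2z$, where $z$ solves the \emph{driftless} forced problem $\partial_s z=\partial_y^2 z+\kappa$ with zero data. The explicit erf Duhamel formula gives $0\le\partial_y z\lesssim \smin^{-1/2}$ on the relevant range, so $\partial_y z\le 1/2$ once $\smin$ is large. The drift contribution $2\kappa\,\partial_y z\le\kappa$ is then absorbed by doubling the source. Integrating the gradient bound from $y=0$ gives $v\le y+2z\le(1+\m{O}(\smin^{-1/2}))y$.

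Your representation $q=\int_s^t\dot g(r)\,\sigma(s,y;r)\,\dn r$ instead keeps the drift inside the kernel. You therefore have to import the self-similar energy estimate behind \eqref{eq:zeta-error}, with the sign of $\kappa$ flipped; you correctly read \eqref{26may516} as a bound on $\abs{\kappa}$. You also need a separate unit-time boundary estimate. This is considerably heavier, though it yields more: a quantitative survival estimate $\sigma\lesssim y/\sqrt{r-s}$ for the drifted process, and a transparent interpretation of $v-y$ as drift accumulated before absorption. The paper's comparison trick buys a short, self-contained argument that uses only explicit Gaussian integrals.

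One justification needs repair. The Girsanov density on a unit interval is not bounded, so that phrase does not by itself give $\sigma(r-1,y;r)\lesssim\Phi(y)$. A clean substitute is pathwise domination. Since $0<\dot g\le\mu\coloneqq\frac{d-1}{2s}$ on $[s,r]$, survival of your process implies survival of the process with constant drift $\mu$. The explicit formula for the latter gives $\sigma(s,y;r)\le \frac{y}{\sqrt{\pi(r-s)}}+\mu y$. This handles both the unit-time step and the short-time piece $r\in[s,s+1]$; it is exactly the bound $\min\{1,Cy/\sqrt{r-s}\}+Cy$ you proposed. It cannot replace the self-similar input for $r\gg s$, because there the $\mu y$ term would integrate to a divergent $\log(t/s)$ contribution.
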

\begin{proof}
  To frame this as a forward problem, we set 
  $\ti v(s,y)=v(t-s,y)$.
  Then
  \begin{equation*}
    \begin{cases}
      \partial_s \ti v = \partial_y^2 \ti v + \kappa(s) \partial_y  \ti v, & s \in (0, t - \smin), y > 0,\\
      \ti v(0, y) = y,\\
      \ti v(s,0) = 0.
    \end{cases}
  \end{equation*}
  with 
  \begin{equation}
    \label{26may714}
    \kappa(s) \coloneqq \frac{d-1}{2(t-s)} > 0.
  \end{equation}
  Then $w \coloneqq  \ti v - y$ satisfies
  \begin{equation}
    \label{eq:1D-diff}
    \partial_s w = \partial_y^2 w + \kappa(\partial_y w + 1), \quad w(0, y) = 0, \enspace w(s, 0) = 0.
  \end{equation}
  Because $\kappa > 0$, the comparison principle implies that $w > 0$.
  This gives the lower bound in \eqref{26may710}, and it thus suffices to control $w$ from above.

  Let $z$ solve the simpler problem 
  \begin{equation}
    \label{eq:simpler}
    \partial_s z = \partial_y^2 z + \kappa, \quad z(0,y) = 0, \enspace z(s,0) = 0.
  \end{equation}
  Using the Duhamel formula, we can express $z$ in terms of the error function $\Phi$ from \eqref{eq:erf}:
  \begin{equation*}
    z(s, y) = \int_0^s \kappa(s') \Phi\left(\frac{y}{\sqrt{s-s'}}\right) \d s'.
  \end{equation*}
  Differentiating in $y$, and using \eqref{26may714}, we find
  \begin{equation}
    \label{26may716}
    \partial_y z \lesssim \int_0^s (t - s')^{-1} (s - s')^{-1/2} \d s' = \int_0^{s} (t - s + r)^{-1} \frac{\dn r}{\sqrt{r}}.
  \end{equation}
  If $t-s>s$, then the integral in the right side above can be estimated as
  \begin{equation}
    \label{eq:small-s}
    \partial_y z \lesssim (t-s)^{-1}\sqrt{s} \lesssim (t - s)^{-1/2}.
  \end{equation}
  On the other hand, if $t-s<s$, then the right side of \eqref{26may716} is bounded similarly by 
  \begin{equation}
    \label{26may718}
    \begin{aligned}
      \partial_y z & \lesssim\int_0^{t-s} (t - s + r)^{-1} \frac{\dn r}{\sqrt{r}} +\int_{t-s}^s (t - s + r)^{-1} \frac{\dn r}{\sqrt{r}}\\
                   &\lesssim (t-s)^{-1}\sqrt{t-s}+ \int_{t-s}^{\infty} r^{-3/2} {\dn r} 
                     \lesssim (t - s)^{-1/2}.
    \end{aligned}
  \end{equation}
  Recalling that $s<t-\smin$, it follows that $\partial_y z \leq 1$ provided $\smin \gg 1$.
  Comparing \eqref{eq:1D-diff} and \eqref{eq:simpler}, we see that $2z$ is then a supersolution for \eqref{eq:1D-diff}.
  Hence \eqref{eq:small-s} and \eqref{26may718} yield
  \begin{equation*}
    v(s,y)=y+w(t-s,y) \leq y+2z(t-s,y) \leq [1 + \m{O}(\smin^{-1/2})] y.
    \qedhere
  \end{equation*}
\end{proof}
We now construct a putative super-solution for \eqref{eq:backward-multi} from an angular superposition of $v$ with exponential tilt.
Recalling the radius $\rho_\Gamma$ of $\Gamma$ in $S^{d-1}$ from \eqref{eq:Gamma-rad}, we define
\begin{equation}
  \label{26may722}
  m(\theta) \coloneqq \max\big\{\big[\dist_{S^{d-1}}(\theta, \h\theta) - \rho_\Gamma\big]_+^{-1/2},\, 1\big\}.
\end{equation}
We use this as a density on $\Gamma^\cc$ that places extra weight near $\partial \Gamma$.
Define
\begin{equation}
  \label{26may806}
  \h \gamma \coloneqq \frac{1}{2} \log \h t - \beta_{\h t}
\end{equation}
and
\begin{equation}
  \label{eq:superposition}
  V(s, x; t) \coloneqq \int_{\Gamma^\cc} v(s, 2s + g(s) - \h \gamma - \theta \cdot x + 1; t) \e^{-(2 s - \h \gamma - \theta \cdot x)} m(\theta) \d \theta.
\end{equation}
We can easily check that
\begin{equation}
  \label{26may719}
  \ti u(s, x) \coloneqq v(s, 2 s + g(s) - \h\gamma - \theta \cdot x) \e^{-(2 s - \h\gamma - \theta \cdot x)}
\end{equation}
solves the linearized Fisher--KPP equation
\begin{equation*}
  \partial_s \ti u + \Delta \ti u + \ti u = 0
\end{equation*}
where the argument of $v$ in \eqref{26may719} is positive.
Integrating over $\theta \in \Gamma^\cc$ with weight $m$, the same is true of $V$:
\begin{equation}
  \label{26may802}
  \partial_s V + \Delta V + V = 0.
\end{equation}
Thus, $V$ is potentially a super-solution for $u$ that solves \eqref{eq:backward-multi}.
However, we must be careful with the domain of definition of $V$, as $v$ is only defined on the half-line.
We will not use the values of $v$ very close to the origin, so we focus on its restriction to $[1, \infty)$.
The superposition $V$ is well-defined on
\begin{equation}
  \label{26may723}
  D \coloneqq \{(s, x) \in [1, t] \times \R^d \colon \theta \cdot x < 2 s + g(s) - \h\gamma ~\text{ for all }\theta \in \Gamma^\cc\}.
\end{equation}
Given $s \in [1, t]$, we let $D(s)$ denote the $s$ time-slice of $D$, which is the convex subset of $\R^d$ formed from the intersection of the half-spaces $\{\theta \cdot x < 2 s + g(s) - \h\gamma\}$ over all $\theta \in \Gamma^\cc$.
Let $\spartial D$ denote the \emph{spatial} boundary of $D$, namely the union of $\{s\} \times \partial D(s)$ over $s \in [1, t]$.
\begin{lemma}
  \label{lem:large-enough}
  There exists a constant $c(d) > 0$ such that 
  \begin{equation*}
    \inf_{(s,x)\in\spartial D} V(s,x;t) \geq c.
  \end{equation*}
\end{lemma}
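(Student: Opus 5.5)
The plan is a direct lower bound on the integral \eqref{eq:superposition}. I will restrict it to the directions $\theta$ for which the argument of $v$ stays between $1$ and $2$, and then check that these directions carry $m$-weighted mass of order $s^{-(d-1)/2}$.

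\emph{Reduction to a weighted cap estimate.} Write $R_s \coloneqq 2s + g(s) - \h\gamma$, so that $D(s) = \{x : \theta\cdot x < R_s \ \forall \theta \in \Gamma^\cc\}$. Fix $(s,x)\in\spartial D$. By compactness of $\overline{\Gamma^\cc}$ there is a maximizer $\theta_0 \in \overline{\Gamma^\cc}$ of $\theta\mapsto\theta\cdot x$ with $\theta_0\cdot x = R_s$. For $\theta\in\Gamma^\cc$ set $y_\theta \coloneqq R_s - \theta\cdot x + 1 \ge 1$. Then the exponential factor in \eqref{eq:superposition} equals
\[
  \e^{-(2s-\h\gamma-\theta\cdot x)} = \e^{g(s)}\e^{1-y_\theta} = s^{\frac{d-1}{2}}\e^{1-y_\theta}.
\]
By Lemma~\ref{lem:drift-heat} we have $v(s,y_\theta;t)\ge y_\theta\ge 1$; here I will assume $s\ge\smin$, and treat the bounded range $s\in[1,\smin]$ by parabolic comparison or by a trivial adjustment of constants. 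Hence
\[
  V(s,x;t) \ \ge\ \e^{-1}\, s^{\frac{d-1}{2}} \int_{\{\theta\in\Gamma^\cc:\ \theta\cdot x \ge R_s - 1\}} m(\theta)\d\theta .
\]
It therefore suffices to show that this $m$-weighted measure is $\gtrsim s^{-(d-1)/2}$. Since $\Gamma$ is a small cap, $|x| \le R_s/\cos\rho_\Gamma \lesssim s$, so it is enough to bound the weighted measure below by $c(d)|x|^{-(d-1)/2}$.

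\emph{Case 1: $\theta_0$ in the interior of $\Gamma^\cc$.} Then $\theta_0 = x/|x|$ and $|x| = R_s$. For $\dist(\theta,\theta_0)\le |x|^{-1/2}$ we have $\theta\cdot x \ge |x|(1-\tfrac12|x|^{-1}) \ge R_s - 1$. This cap has measure $\asymp |x|^{-(d-1)/2}$ (capped at $\pi/2$ if $|x|$ is small). Its centre lies outside the geodesic ball $\Gamma$, and a geodesic ball is convex, so a fixed fraction $c(d)$ of the cap lies in $\Gamma^\cc$. Since $m\ge1$, this case is done.

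\emph{Case 2: $\theta_0\in\partial\Gamma$; this is the main obstacle.} Here $x/|x|$ lies in $\overline\Gamma$, and the tangential gradient of $\theta\mapsto\theta\cdot x$ at $\theta_0$ has size up to $|x|\rho_\Gamma$. That size may be large, so plain round caps around $\theta_0$ lose too much. This is exactly why $m$ is singular at $\partial\Gamma$.

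I will use a thin boundary layer instead. Take the directions $\theta\in\Gamma^\cc$ with $\dist(\theta,\partial\Gamma)\le\eta \coloneqq c/(1+|x|\rho_\Gamma)$ and with displacement along $\partial\Gamma$ from $\theta_0$ at most $c|x|^{-1/2}$.
\begin{itemize}
\item Motion along $\partial\Gamma$: by maximality of $\theta_0$ and symmetry of $\Gamma$ about $\h\theta$, the linear term vanishes there, and the quadratic loss is $\lesssim |x|\cdot c^2|x|^{-1} \le 1/2$.
\item Motion normal to $\partial\Gamma$: the loss is at most $|x|\rho_\Gamma\,\eta \le 1/2$.
\end{itemize}
Hence $\theta\cdot x \ge R_s - 1$ on this layer. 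Integrating $m \asymp u^{-1/2}$ across the layer, with $u$ the distance to $\partial\Gamma$, gives weighted mass
\[
  \gtrsim \eta^{1/2}\, |x|^{-\frac{d-2}{2}} \gtrsim (|x|\rho_\Gamma)^{-1/2}\, |x|^{-\frac{d-2}{2}} \ge |x|^{-\frac{d-1}{2}},
\]
using $\rho_\Gamma\le 1$. When $|x|\rho_\Gamma\lesssim 1$ the same estimate holds with $\eta\asymp1$.

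The delicate points are the precise second-order expansion of $\theta\cdot x$ near $\theta_0$ and the exact geometry of the layer. In particular, $\theta_0$ is the point of $\partial\Gamma$ farthest from $x/|x|$, so the tangential Hessian along $\partial\Gamma$ has a favorable sign. Combining the two cases gives $V\ge c(d)$ on $\spartial D$.
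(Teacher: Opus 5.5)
Your strategy is the paper's: keep only the directions where the argument of $v$ in \eqref{eq:superposition} is $O(1)$, split according to whether the maximizer $\theta_0$ of $\theta\cdot x$ over $\Gamma^\cc$ is interior or lies on $\partial\Gamma$, and in the boundary case integrate the $u^{-1/2}$ singularity of $m$ across a layer. Your reduction and Case~1, including the half-cap argument, are correct. No special treatment of $s<\smin$ is needed: the lower bound $v\ge y$ in Lemma~\ref{lem:drift-heat} uses only $\kappa>0$, so it holds for all $s\in[1,t]$.

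Case~2, however, contains a step that fails. Set $\alpha\coloneqq\dist(x/|x|,\h\theta)\le\rho_\Gamma$. Moving a distance $u$ outward from $\theta_0$ costs
\[
  |x|\bigl[\cos(\rho_\Gamma-\alpha)-\cos(\rho_\Gamma-\alpha+u)\bigr]\approx|x|\sin(\rho_\Gamma-\alpha)\,u+\tfrac12|x|u^2,
\]
and you dropped the second term. For your width $\eta=c/(1+|x|\rho_\Gamma)$, this term is of order $c^2|x|/(1+|x|\rho_\Gamma)^2$, which is unbounded when $|x|\gg1$ and $|x|\rho_\Gamma^2\ll1$. For example, if $x/|x|\in\partial\Gamma$ and $|x|\rho_\Gamma\lesssim1$, the gradient vanishes, you take $\eta\asymp c$, and the loss is $\asymp c^2|x|$. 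In the same regime, the tangential patch of radius $c|x|^{-1/2}$ does not fit inside $\partial\Gamma$, which is a $(d-2)$-sphere of radius $\asymp\rho_\Gamma$. Its measure is then $\asymp\rho_\Gamma^{d-2}$, not $|x|^{-(d-2)/2}$. This regime lies inside the lemma's range, because $|x|\asymp s$ and $s$ runs over all of $[1,t]$, including $\log\h t\ll s\ll\h t/\log\h t$.

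The repair is short.
\begin{itemize}
\item If $|x|\rho_\Gamma^2\ge1$, take $\eta=c/|x|$, which is the paper's width $\asymp s^{-1}$. The linear loss is then at most $c\rho_\Gamma$ and the quadratic loss at most $c^2/|x|$. The patch fits inside $\partial\Gamma$, and the $m$-mass $\eta^{1/2}|x|^{-(d-2)/2}\asymp|x|^{-(d-1)/2}$ is exactly what is needed, so widening the layer gains nothing.
\item If $|x|\rho_\Gamma^2\le1$, round caps do \emph{not} lose too much. On the cap of radius $c|x|^{-1/2}$ about $\theta_0$ the loss is at most $c|x|^{1/2}\rho_\Gamma+c^2\lesssim c$. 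Your Case~1 half-cap argument then puts half of that cap in $\Gamma^\cc$.
\end{itemize}
Only $s\ge\h t$ is needed in Lemma~\ref{lem:lacuna-small}. There $|x|\rho_\Gamma^2\gtrsim A^2\log\h t$, so your layer works as written; the paper's sketch likewise only describes the regime $s^{-1/2}\lesssim\rho_\Gamma$.

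Finally, your justification of the tangential bound is wrong. $\theta_0$ is the point of $\partial\Gamma$ \emph{nearest} to $x/|x|$, not the farthest. Since $\theta_0$ is a maximum, the tangential Hessian there is nonpositive, not favorable. What keeps the loss along $\partial\Gamma$ at $O(|x|\ell^2)$, despite the extrinsic curvature $\asymp\rho_\Gamma^{-1}$ of $\partial\Gamma$, is the fact that $\alpha\le\rho_\Gamma$. In arclength, the tangential second derivative at $\theta_0$ equals $-|x|\sin\alpha/\sin\rho_\Gamma$.
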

\begin{proof}
  Fix $s \in [1, t]$ and $x_0 = (r_0, \theta_0) \in \partial D(s)$, so $r_0 \asymp s$.
  We apply Laplace's method to \eqref{eq:superposition}, so the estimate hinges on the maximizer of the function $\eta(\theta) \coloneqq \theta \cdot \theta_0$ for $\theta\in\Gamma^{\cc}$, which implicitly appears in the exponent in \eqref{eq:superposition} for $V(s,x_0;t)$.

  We first consider $\theta_0 \in \Gamma$.
  Then there exists 
  \begin{equation}
    \label{26may724}
    \theta_1 \in \argmax_{\theta\in\Gamma^{\cc}} \eta(\theta) \subset \partial \Gamma,
  \end{equation}
  and the maximizer is unique unless $\theta_0 = \h\theta$.
  Near $\theta_1$, the function $\eta$ falls linearly in the direction $e_\parallel \in T_{\theta_1}S^{d-1}$ connecting $\theta_0$ to $\theta_1$, 
  and quadratically in the $d-2$ orthogonal directions $E_\perp \subset T_{\theta_1}S^{d-1}$.
  Observe that $r_0 \asymp s$ and 
  \begin{equation*}
    \e^{\theta \cdot x} = \e^{r_0 \eta(\theta)}.
  \end{equation*}
  It follows that the function $\theta \mapsto \e^{\theta \cdot x}$ falls by no more than a constant factor when we move distance $\asymp s^{-1}$ along $e_\parallel$ from $\theta_1$, and $\asymp s^{-1/2}$ in $E_\perp$.
  Due to the square-root weight in the definition \eqref{26may722} of $m$, 
  this region in $\Gamma^\cc$ has $m$-measure of order 
  \begin{equation}
    \label{26may1104}
    s^{1/2}s^{-1}s^{-(d-2)/2} = s^{-(d-1)/2}.
  \end{equation}
  It follows that
  \begin{equation}
    \label{eq:Laplace}
    V(s, x_0; t) \gtrsim s^{-(d-1)/2} v\big(s, 2 s + g(s) - \h\gamma - \theta_1 \cdot x_0 + 1; t\big) \e^{-(2s - \h\gamma - \theta_1 \cdot x_0)}.
  \end{equation}
  Since $x_0\in\partial D$, we see from the definition \eqref{26may723} of the domain $D$ and \eqref{26may724} that $x_0$ lies in the plane
  \begin{equation}
    \label{26may725}
    \{\theta_1 \cdot x = 2 s + g(s) - \h\gamma\}.
  \end{equation}
  Thus Lemma~\ref{lem:drift-heat} yields
  \begin{equation*}
    v(s, 2 s + g(s) - \h\gamma - \theta_1 \cdot x_0 + 1; t) = v(s, 1; t) \asymp 1,
  \end{equation*}
  while \eqref{eq:drift} and \eqref{26may725} imply that 
  \begin{equation*}
    \e^{-(2s - \h\gamma - \theta_1 \cdot x_0)} = \e^{g(s)} = s^{(d-1)/2}.
  \end{equation*}
  Combining these in \eqref{eq:Laplace}, we obtain 
  \begin{equation*}
    V(s, x_0; t) \gtrsim 1.
  \end{equation*}
  Conversely, if $\theta_0 \in \Gamma^\cc$, then $\eta$ is maximized at $\theta_1 = \theta_0$ 
  and its values fall quadratically in all directions.
  So $\theta \mapsto \e^{\theta \cdot x}$ falls by no more than a constant factor in a region of radius $s^{-1/2}$ around $\theta_0$ in $\Gamma^\cc$.
  Because $m \geq 1$, this region has $m$-measure $\gtrsim s^{-(d-1)/2}$ and
  \begin{equation}
    \label{eq:standard-Laplace}
    V(s, x_0; t) \gtrsim s^{-(d-1)/2} v\big(s, 2 s + g(s) - \h\gamma - \theta_1 \cdot x_0 + 1\big) \e^{-(2s - \h\gamma - \theta_1 \cdot x_0)} \asymp 1.
  \end{equation}
  Here we use the fact that $x_0$ still satisfies \eqref{26may725} because it lies on $\partial D$.
  The implied constants depend only on the dimension $d$.
\end{proof}
As a consequence of Lemma~\ref{lem:large-enough}, we know that a sufficiently large multiple of $\lambda V$ exceeds the boundary data of $u$ on $\spartial D$.
Next, we show that such a multiple also exceeds the terminal data of $u$ in $D(t)$.
\begin{lemma}
  \label{lem:super-initial}
  There exists $C(d) > 0$ such that 
  \begin{equation}
    \label{26may814}
    C \lambda V(t,x;t) \geq u(t, x;t) \ForAll x\in D(t).
  \end{equation}
\end{lemma}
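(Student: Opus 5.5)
Since $1-\e^{-a}\le a$, the terminal data satisfy $u(t,x;t)\le \lambda\phi(x;t)$, so \eqref{26may814} reduces to the $\lambda$-free bound $C V(t,x;t)\ge \phi(x;t)$ on $D(t)$. Only points with $\theta_0\coloneqq x/\abs{x}\in\Gamma^\cc$ matter, since $\phi$ vanishes when $\theta_0\in\Gamma$. For such $x=(r,\theta_0)$ we must show
\begin{equation*}
  V(t,x;t)\gtrsim [(\h b_t-r)_++1]\,\e^{-(\h b_t-r)}.
\end{equation*}
The first step is to rewrite the integrand of \eqref{eq:superposition} at $s=t$. Lemma~\ref{lem:drift-heat} gives $v(t,y;t)=y$ at the terminal time. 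Also, \eqref{26may520}, \eqref{eq:drift} and \eqref{26may806} give
\begin{equation*}
  2t+g(t)-\h\gamma=\h b_t .
\end{equation*}
Hence the integrand is
\begin{equation*}
  (\h b_t-\theta\cdot x+1)\,\e^{-(\h b_t-\theta\cdot x)}\,t^{\frac{d-1}{2}}\,m(\theta).
\end{equation*}
The factor $\h b_t-\theta\cdot x+1$ is at least $1$ on $D(t)$, because there $\theta\cdot x<\h b_t$ for all $\theta\in\Gamma^\cc$.

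The second step is the same Laplace-method computation as in the case $\theta_0\in\Gamma^\cc$ of Lemma~\ref{lem:large-enough}. The exponent $\theta\cdot x=r\,\theta\cdot\theta_0$ is maximized over $\Gamma^\cc$ at $\theta=\theta_0$. It drops by at most $\m{O}(1)$ on a geodesic ball $B$ of radius $c\,r^{-1/2}$ about $\theta_0$, intersected with $\Gamma^\cc$. Since $m\ge1$, the set $B\cap\Gamma^\cc$ has $m$-measure $\gtrsim r^{-(d-1)/2}$; this holds even when $\theta_0$ is near $\partial\Gamma$, because a fixed fraction of the ball lies in $\Gamma^\cc$, as $\Gamma$ is a spherical cap of radius $\gg r^{-1/2}$.

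On $B$ we also need $\h b_t-\theta\cdot x+1\gtrsim (\h b_t-r)_++1$. This holds because
\begin{equation*}
  \h b_t-\theta\cdot x=(\h b_t-r)+r(1-\theta\cdot\theta_0)\ge \h b_t-r,
\end{equation*}
and the left side is also at least $0$ on $D(t)$. Combining these, I expect
\begin{equation*}
  V(t,x;t)\gtrsim t^{\frac{d-1}{2}}\,r^{-\frac{d-1}{2}}\,[(\h b_t-r)_++1]\,\e^{-(\h b_t-r)} .
\end{equation*}

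The main obstacle is that this prefactor is only bounded below when $r\gtrsim t$. For small $r$, the weight $t^{\frac{d-1}{2}}r^{-\frac{d-1}{2}}$ is large, not small, so one must check it does no harm; indeed a small $r$ only helps, since the Laplace ball becomes larger (capped at the whole sphere) and the prefactor exceeds $1$. I would therefore split into two cases.

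\emph{Case $r\ge t$.} The Laplace bound above applies directly, with $t^{\frac{d-1}{2}}r^{-\frac{d-1}{2}}\asymp1$ since $r\le \h b_t\lesssim t$.

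\emph{Case $r<t$.} Replace $B$ by a ball of fixed radius $c$ around $\theta_0$, intersected with $\Gamma^\cc$. On this ball, $r(1-\theta\cdot\theta_0)\le c^2r$. Bounding the linear factor crudely by $1$ gives
\begin{equation*}
  V\gtrsim t^{\frac{d-1}{2}}\,\e^{-(\h b_t-r)}\,\e^{-c^2 r}\min\bigl(1,r^{-\frac{d-1}{2}}\bigr).
\end{equation*}
It then remains to check that this dominates $(\h b_t-r)\e^{-(\h b_t-r)}$, which amounts to $t^{\frac{d-1}{2}}\e^{-c^2r}\gtrsim t\,r^{\frac{d-1}{2}}$.

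This check is the delicate point. If it fails for moderate $r$, I would fall back on the Laplace estimate with a ball of radius $r^{-1/2}$, which still gives prefactor $(t/r)^{\frac{d-1}{2}}\ge1$ and the full linear factor. That fallback is in fact uniform for all $r\ge1$, so the case split may be unnecessary. For $r\le1$, the ball is the whole of $\Gamma^\cc$, which has measure $\asymp1$, and everything is trivially bounded.
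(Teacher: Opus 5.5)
Your argument is correct and follows the paper's proof: you reduce to $V(t,\cdot\,;t)\gtrsim\phi(\cdot\,;t)$ via $1-\e^{-a}\le a$, use $v(t,y;t)=y$ and $2t+g(t)-\hat\gamma=\hat b_t$, and apply Laplace's method about the maximizer $\theta_0\in\Gamma^\cc$ using $m\ge 1$ and $\hat b_t-\theta\cdot x\ge \hat b_t-r$. The case split is unnecessary, and your fixed-radius check for $r<t$, which discards the linear factor, is actually false in general (e.g.\ for $d=2$, or for $r\gg\log t$); instead, as the paper implicitly does, use a ball of radius $c\,t^{-1/2}$ for every $x\in D(t)$ --- since $r\le\hat b_t\lesssim t$, the exponent drops by only $\mathcal{O}(1)$ there, the ball's measure $\asymp t^{-\frac{d-1}{2}}$ exactly cancels the factor $t^{\frac{d-1}{2}}$, and $\rho_\Gamma\gg t^{-1/2}$ genuinely guarantees that a fixed fraction of it lies in $\Gamma^\cc$, whereas your justification $\rho_\Gamma\gg r^{-1/2}$ for the $r^{-1/2}$-ball fails when $r$ is small.
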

\begin{proof}
  We first observe that 
  \begin{equation*}
    u(s=t,x;t,\lambda)=1-e^{-\lambda\phi(x;t)} \leq \lambda \phi(x;t).
  \end{equation*}
  Thus it suffices to show that 
  \begin{equation}
    \label{26may804}
    V(t, x) \gtrsim \phi(x) \For x\in D(t).
  \end{equation}
  Let $x_0=(r_0,\theta_0) \in D(t)$.
  The definition \eqref{eq:lacuna-data} of $\phi$ implies that $\phi = 0$ for $\theta_0 \in \Gamma$.
  Thus, we only need to check \eqref{26may804} for $\theta_0 \in \Gamma^\cc$.
  In that case, we may take $\theta=\theta_0$ in the definition \eqref{26may723} of $D$.
  Recalling \eqref{eq:b-hat}, \eqref{eq:drift}, and \eqref{26may806}, this gives
  \begin{equation}
    \label{26may812}
    r_0\leq 2t+g(t)- \h\gamma =2t+\frac{d-1}{2}\log t-\frac{1}{2}\log\h t+\beta_{\h t}= \h b_t.
  \end{equation}
  We now write \eqref{eq:superposition} as 
  \begin{equation}
    \label{26may808}
    \begin{aligned}
      V(t,x_0; t) &
                    =\e^{-[\hat b_t - g(t)]}\int_{\Gamma^\cc} v(t, \h b_t - r_0\theta_0 \cdot\theta + 1; t) 
                    \e^{r_0\theta_0\cdot \theta} m(\theta) \d \theta.
    \end{aligned}
  \end{equation}
  As in the second part of the proof of Lemma \ref{lem:large-enough}, since $\theta_0\in\Gamma^\cc$, the maximizer of $\theta_0\cdot\theta$ over $\theta \in \Gamma^\cc$ is $\theta_0$ itself.
  Then, as in \eqref{eq:standard-Laplace} but with $s=t$, \eqref{26may812} and \eqref{26may808} yield
  \begin{equation*}
    V(t,x_0; t) \gtrsim  t^{-(d-1)/2} \e^{-[\hat b_t - g(t)]} v\big(t, \h b_t - r_0 + 1; t) \e^{r_0}.
  \end{equation*}
  Using Lemma~\ref{lem:drift-heat} and \eqref{eq:drift}, we get
  \begin{equation*}
    V(t, r_0, \theta_0; t) \gtrsim \pi^{\frac{d-1}{4}} (\h b_t - r_0 + 1) \e^{-(\h b_t - r_0)} = \phi(r_0, \theta_0; t),
  \end{equation*}
  and \eqref{26may814} follows.
\end{proof}
We can now control $u$ in the center of the lacuna $\Gamma$ in its terminal data $\phi$.
Recall that $\Gamma$ is centered at the angle $\h\theta$ of the excursion, and its radius in \eqref{eq:Gamma-rad} is
\begin{equation*}
  \rho_\Gamma=A \h t^{-1/2} \log^{1/2} \h t \gg \hat t^{-1/2}.
\end{equation*}
\begin{lemma}
  \label{lem:lacuna-small}
  There exists a constant $C(d) > 0$ such that for all $t \geq \h t$ and $\lambda > 0$,
  \begin{equation}
    \label{26may1116}
    u(\h t, b_{\h t}, \h\theta; t, \lambda) \leq C(d) \lambda A^2 (\log \h t)^{d/2} \, \h t^{-A^2}.
  \end{equation}
\end{lemma}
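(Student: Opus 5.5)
The plan is to compare $u$ with $C\lambda V$ on the domain $D$ and then evaluate $V$ at the point $(\h t, b_{\h t}, \h\theta)$. First, $u \le 1$ always. By Lemma~\ref{lem:large-enough}, $C\lambda V \ge 1 \ge u$ on the spatial boundary $\spartial D$ once $C\lambda c \ge 1$. If instead $\lambda$ is so small that $C\lambda c < 1$, we replace $C\lambda V$ by $C\max\{\lambda,\lambda_0\}V$; this only changes constants, since the claimed bound is trivial when its right side exceeds $1$. By Lemma~\ref{lem:super-initial}, $C\lambda V \ge u$ on the terminal slice $D(t)$. Since $u\ge0$, the nonlinearity satisfies $u(1-u)\le u$, so $u$ is a subsolution of the linear backward equation $\partial_s w+\Delta w+w=0$, which $V$ solves by \eqref{26may802}. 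The comparison principle on $D\cap\{s\ge \smin\}$ then gives $u\le C\lambda V$ throughout. This requires $\h t\ge \smin$, which holds because $\h t\ge T$ is large. We also need $(\h t, b_{\h t}\h\theta)\in D$. For $\theta\in\Gamma^\cc$ we have $\theta\cdot\h\theta\le\cos\rho_\Gamma$, so
\[
b_{\h t}\,\theta\cdot\h\theta \le b_{\h t} - c\,b_{\h t}\rho_\Gamma^2 = b_{\h t}-cA^2\log\h t+O(1),
\]
while $2\h t+g(\h t)-\h\gamma = b_{\h t}$ exactly. Membership in $D$ follows, with margin $\asymp A^2\log \h t$.

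Next I evaluate $V(\h t, b_{\h t}\h\theta;t)$ directly. Write $y(\theta)=b_{\h t}(1-\theta\cdot\h\theta)$ and $\delta=\dist(\theta,\h\theta)$. Then $y\asymp \h t\,\delta^2$ for small $\delta$, and
\[
2\h t-\h\gamma-b_{\h t}\,\theta\cdot\h\theta = y-g(\h t).
\]
Lemma~\ref{lem:drift-heat} gives $v\le 2(y+1)$, so
\[
V\lesssim \h t^{\frac{d-1}{2}}\int_{\Gamma^\cc}(y+1)\e^{-y}m(\theta)\d\theta.
\]
I pass to polar coordinates about $\h\theta$, with radial variable $\delta\ge\rho_\Gamma$. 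The integrand is at most $(y+1)\e^{-c\h t\delta^2}\max\{(\delta-\rho_\Gamma)^{-1/2},1\}\,\delta^{d-2}\d\delta$, with $\h t\delta^2\ge A^2\log\h t$ on this range. It is dominated by the region near $\delta=\rho_\Gamma$. Set $\delta=\rho_\Gamma+\sigma$; then
\[
\h t\delta^2\ge A^2\log\h t+2\h t\rho_\Gamma\sigma .
\]
The exponential therefore contributes $\h t^{-A^2}\e^{-2A\sqrt{\h t\log\h t}\,\sigma}$, and integrating $\sigma^{-1/2}$ against it gives a factor $(\h t\log\h t)^{-1/4}$. The remaining factors are $y+1\lesssim A^2\log\h t$ and $\rho_\Gamma^{d-2}=A^{d-2}(\log\h t/\h t)^{(d-2)/2}$. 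Combined with the prefactor $\h t^{(d-1)/2}$, these give a power of $\h t$ of $\h t^{1/2-1/4}$ times logarithms. That is not bounded, so the bookkeeping must be sharpened. The fix is to use the exact exponent $y$ in place of $c\h t\delta^2$, and to make sure the constants match, i.e. $b_{\h t}(1-\cos\rho_\Gamma)\approx \h t\rho_\Gamma^2 = A^2\log \h t$, which gives $\h t^{-A^2}$ exactly.

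The main obstacle is exactly this final Laplace-type estimate. The polynomial prefactors $\h t^{(d-1)/2}\rho_\Gamma^{d-2}(\h t\rho_\Gamma)^{-1/2}$ must be tracked so that they combine with $\h t^{-A^2}$ into the stated $A^2(\log\h t)^{d/2}\h t^{-A^2}$, up to powers of $\h t$ that can be absorbed. If a residual power $\h t^{1/4}$ genuinely survives, I would absorb it by noting that Proposition~\ref{prop:local} only needs $\h t^{-A^2+O(1)}$, and adjust $A$ accordingly. I would first try a careful computation with $\h t\delta^2$ replaced by $b_{\h t}(1-\cos\delta)\ge \h t\delta^2(1-O(\delta^2))$ before resorting to that.
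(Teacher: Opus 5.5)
Your plan is the paper's: compare $u$ with $C\lambda V$ on $D$, check that $(\h t,\h x)\in D$, then evaluate $V(\h t,\h x;t)$ by Laplace's method. Your linear comparison via $u(1-u)\le u$ is a clean substitute for the paper's $\min\{C\lambda V,1\}$. Two steps fail as written.

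The first is small $\lambda$. Replacing $\lambda$ by $\max\{\lambda,\lambda_0\}$ gives a bound proportional to $\lambda_0$, not to $\lambda$. The right side of \eqref{26may1116} is small, not $\ge 1$, when $\lambda$ is small, so nothing is trivial there. In fact the inequality fails in that range. As $\lambda\downarrow 0$, $u/\lambda\uparrow \E[\h Y_t(\Gamma^\cc)\mid \m{G}_{U,T}]$. By many-to-one this first moment grows like $\sqrt{t-\h t}$, because under the exponential tilt $(\h b_t-R)_+$ has mean of order $\sqrt{t-\h t}$. Hence no bound linear in $\lambda$ can be uniform in $t$. The paper's proof also tacitly needs $\lambda\ge\lambda_0(d)$, so that $\min\{C\lambda V,1\}=1$ on $\spartial D$. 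This is harmless, since Proposition~\ref{prop:local} only uses $\lambda=\h t^K$.

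The second is the Laplace estimate, and here your bookkeeping is correct: the extra $\h t^{1/4}$ is genuine for this $V$. Switching to the exact exponent $y=b_{\h t}(1-\cos\delta)$ only fixes the constant in $\h t^{-A^2}$. The normal slope of $y$ at $\partial\Gamma$ is $b_{\h t}\sin\rho_\Gamma\approx 2A\sqrt{\h t\log \h t}$. Against the $\sigma^{-1/2}$ singularity of $m$, the effective collar therefore has $m$-measure $\asymp (A^2\h t\log\h t)^{-1/4}\rho_\Gamma^{d-2}$, and $V(\h t,\h x;t)\asymp A^{d-1/2}(\log \h t)^{d/2-1/4}\,\h t^{1/4-A^2}$.

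The paper reaches $\h t^{-A^2}$ only by asserting that $\e^{\theta\cdot\h x}$ decays outside a collar of width $\h t^{-1}$. That overstates the slope by a factor $\asymp\rho_\Gamma^{-1}$. So your proposal does not prove \eqref{26may1116} as stated, but neither does the paper's argument.

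Your fallback is a valid repair for everything downstream. With the extra $\h t^{1/4}$, Proposition~\ref{prop:local} still follows by taking, e.g., $A^2=2K+1$.

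If you want the stated form, use the weight $m=\max\{\rho_\Gamma^{1/2}[\dist_{S^{d-1}}(\theta,\h\theta)-\rho_\Gamma]_+^{-1/2},1\}$.
\begin{itemize}
\item Lemma~\ref{lem:large-enough} is needed only for $s\ge \h t$, where $\rho_\Gamma\ge s^{-1/2}$.
\item There, for $\theta_0\in\Gamma$, the normal derivative of $\theta\mapsto\theta\cdot x_0$ at $\theta_1$ is at most $r_0\sin\rho_\Gamma$.
\item So the usable collar has width $(s\rho_\Gamma)^{-1}$, and the relevant patch still has $m$-measure $\gtrsim s^{-(d-1)/2}$.
\item At the center, the factor $\rho_\Gamma^{1/2}$ cancels the excess exactly, giving $u\lesssim \lambda A^{d}(\log\h t)^{d/2}\,\h t^{-A^2}$ for $\lambda\ge\lambda_0$.
\end{itemize}
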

\begin{proof}
  First, Lemmas~\ref{lem:large-enough} and \ref{lem:super-initial}, together with \eqref{26may802}, imply that the function
  \begin{equation*}
    \bar{V} \coloneqq
    \begin{cases}
      \min\{C \lambda V, 1\} & \text{in } D,\\
      1 & \text{in } D^\cc
    \end{cases}
  \end{equation*}
  is a supersolution of \eqref{eq:backward-multi} for $C(d) \gg 1$ that lies above $u$ at the boundary, whence
  \begin{equation*}
    u \leq \bar{V}.
  \end{equation*}
  The point $(\h t,\h x)$ with  $\h x \coloneqq (b_{\h t}, \h \theta)$ lies in $D$ because for any $\theta\in\Gamma^\cc$,
  \begin{equation}
    \label{26may1106}
    \theta\cdot \h x<\h \theta \cdot \h x =b_{\h t}=2\h t +\frac{d-2}{2}\log\h t+\beta_{\h t}=2 \h t + g(\h t) - \h\gamma.
  \end{equation}
  Hence, 
  \begin{equation}
    \label{26may1114}
    u(\h t, \h x;t) \le C \lambda V(\h t,\h x;t),
  \end{equation}
  and it suffices to control $V$ from above.
  We will once again use the Laplace method on the expression
  \begin{equation}
    \label{26may1102}
    V(\h t, \h x; t) \coloneqq \int_{\Gamma^\cc} v(\h t, 2\h t + g(\h t) - \h \gamma - \theta \cdot \h x + 1; t) \e^{-(2 \h t - \h \gamma - \theta \cdot \h x)} m(\theta) \d \theta.
  \end{equation}
  The function $\theta \mapsto \theta \cdot \h \theta$ is maximized over $\Gamma^\cc$ along the entire boundary $\partial \Gamma$.
  Because $b_{\h t} \asymp \h t$, the function $\e^{\theta \cdot \h x}$ falls rapidly in $\Gamma^\cc$ outside the collar of width $\h t^{-1}$ around $\partial \Gamma$.
  Due to the square-root weight in the definition \eqref{26may722} of $m$, and as in the computation leading to \eqref{26may1104}, this collar has $m$-measure of order
  \begin{equation}
    \label{26may1108}
    \h t^{1/2}r_\Gamma^{(d-2)/2}\h t^{-1} \asymp \h t^{-(d-1)/2} \log^{(d-2)/2} \h t.
  \end{equation}
  Furthermore, as in \eqref{26may1106}, for any $\theta_1\in\partial\Gamma$ we have
  \begin{equation}
    \label{26may1110}
    2\h t + g(\h t) - \h \gamma - \theta_1\cdot \h x=b_{\h t}- \theta_1 \cdot \h x=b_{\h t}(1-\h\theta\cdot\theta_1)
  \end{equation}
  and hence
  \begin{equation}
    \label{26may1112}
    \e^{-(2 \h t - \h \gamma - \theta_1\cdot \h x)} = \e^{g(\h t)-b_{\h t}(1-\h\theta\cdot\theta_1)} = \hat t^{(d-1)/2} \e^{-b_{\h t}(1-\h\theta\cdot\theta_1)}.
  \end{equation}
  Drawing on \eqref{26may1108}, \eqref{26may1110} and \eqref{26may1112}, the Laplace method applied to \eqref{26may1102} now gives
  \begin{equation}
    \label{eq:center}
    V(\h t, \h x; t) \lesssim (\log \h t)^{\frac{d-2}{2}} v\big(\h{t}, b_{\h t}(1 - \theta_1 \cdot \h \theta) + 1\big) \e^{-b_{\h t}(1 - \theta_1 \cdot \h \theta)},
  \end{equation}
  for any $\theta_1 \in \partial \Gamma$.
  Also, \eqref{eq:Gamma-rad} yields
  \begin{equation*}
    1 - \theta_1 \cdot \h\theta = 1 - \cos \rho_\Gamma = \frac{A^2}{2} \h t^{-1} \log \h t + \m{O}(\h t^{-2} \log^2 \h t).
  \end{equation*}
  Since $b_{\h t} = 2\h t + \m{O}(\log \h t)$, it follows that
  \begin{equation*}
    b_{\h t}(1 - \theta_1 \cdot \h \theta) = A^2 \log \h t + \m{O}(\h t^{-1} \log^2 \h t).
  \end{equation*}
  By Lemma~\ref{lem:drift-heat}, we see that \eqref{eq:center} yields
  \begin{equation*}
    V(\h t, \h x; t) \lesssim A^2 (\log \h t)^{d/2} \h t^{-A^2}.
    \qedhere
  \end{equation*}
  Then \eqref{26may1116} follows from \eqref{26may1114}.
\end{proof}
Finally, we can prove our main localization result.
\begin{proof}[Proof of Proposition~\ref{prop:local}]
  Fix $K > 0$.
  We wish to show that 
  \begin{equation}
    \label{26may1118}
    \P[\h t^K \h Y_t(\Gamma^\cc) \geq 1 \mid \m{G}_{U,T}] \le T^{-K}.
  \end{equation}
  Observe that
  \begin{align*}
    \E \exp\bigl[-\h t^K \h Y_t(\Gamma^\cc) \mid \m{G}_{U,T}\big] &\le \P\big[\h t^K \h Y_t(\Gamma^\cc) \le 1 \mid \m{G}_{U,T}\big] +
                                                                    \e^{-1} \P\big[\h t^K \h Y_t(\Gamma^\cc) \ge 1 \mid \m{G}_{U,T}\big]\\
                                                                  &\leq 1-(1 - \e^{-1})\P\big[\h t^K \h Y_t(\Gamma^\cc) \ge 1 \mid \m{G}_{U,T} \big].
  \end{align*} 
  Rearranging, \eqref{eq:McKean-again} yields
  \begin{equation*}
    \P[\h t^K \h Y_t(\Gamma^\cc) \geq 1 \mid \m{G}_{U,T}] \lesssim 1 - \E \exp\bigl[-\h t^K \h Y_t(\Gamma^\cc) \mid \m{G}_{U,T}\big] = u(\h t, b_{\h t}, \h\theta; t, \h t^K).
  \end{equation*}
  Applying Lemma~\ref{lem:lacuna-small} with $\lambda=\h t^K$, we obtain
  \begin{equation*}
    \P[\h t^K \h Y_t(\Gamma^\cc) \geq 1 \mid \m{G}_{U,T}] \leq C(d) A^2 (\log \h t)^{d/2} \h t^{K - A^2},
  \end{equation*}
  and this bound is uniform in $t \geq \h t$.
  Now let $A = 2\sqrt{K}$.
  Because $\h t \geq T$, \eqref{26may1118} holds for $T \geq C(d,K)$.
\end{proof}

\appendix

\section{Martingale convergence}
\label{sec:convergence}
In this appendix we connect results of~\cite{BerKimLubMalZei24} to prove Theorem~\ref{thm:convergence} and \eqref{eq:positive}.
Recall the windowed population $\m{P}_t^{\text{win}}$ from \eqref{eq:windowbis}.
Adjusting for our diffusivity $\sqrt{2} \op{Id}$, Theorem~1.4 of~\cite{BerKimLubMalZei24} states that
\begin{equation}
  \label{eq:win-conv}
  Y_t^{\text{win}} \coloneqq \pi^{\frac{d-1}{4}} \sum_{p \in \m{P}_t^{\text{win}}} [2t - R_t(p)] \e^{-[2t + \tfrac{d-1}{2} \log t - R_t(p)]} \delta_{\Theta_t(p)} \to Z_\infty
\end{equation}
weakly in probability as $t \to \infty$.
Moreover, \cite[Lemma~3.2]{BerKimLubMalZei24} states that particles outside the window can be safely neglected, as their contributions would vanish in the limit.
Strengthening the statement of Lemma~3.2 of~\cite{BerKimLubMalZei24} by a log-factor, the proof implies that
\begin{equation}
  \label{eq:negligible-log}
  (\log t)\!\! \sum_{p \in \m{P}_t \setminus \m{P}_t^{\text{win}}} (\abs{2t - R_t(p)} + 1) \e^{-[2t + \tfrac{d-1}{2} \log t - R_t(p)]} \overset{\P}{\to} 0 \text{ as } t \to \infty.
\end{equation}
Now define the approximate \emph{additive} martingale
\begin{equation}
  \label{eq:additive}
  A_t \coloneqq \pi^{\frac{d-1}{4}} \sum_{p \in \m{P}_t} \e^{-[2t + \tfrac{d-1}{2} \log t - R_t(p)]} \overset{\P}{\to} 0 \text{ as } t \to \infty.
\end{equation}
Noting that $2t - R_t(p) \geq t^{1/6}$ for $p \in \m{P}_t^{\text{win}}$, we can combine \eqref{eq:win-conv} and \eqref{eq:negligible-log} to conclude that $(\log t)A_t \to 0$ in probability as $t \to \infty$.

Turning to \eqref{eq:shaved}, the vanishing of $(\log t) A_t$ and \eqref{eq:negligible-log} yield
\begin{equation*}
  \norm{Y_t - Y_t^{\text{win}}}_{\mathrm{TV}} \lesssim (\log t) A_t + \sum_{p \in  \m{P}_t \setminus \m{P}_t^{\text{win}}} \abs{2t - R_t(p)} \e^{-[2t + \tfrac{d-1}{2} \log t - R_t(p)]} \to 0
\end{equation*}
in probability as $t \to \infty$.
Thus Theorem~\ref{thm:convergence} follows from \eqref{eq:win-conv}.
Similarly,
\begin{equation*}
  \big[Y_t - \h t^{-1/2} \e^{\beta_{\h t}} \h Y_t\big]_-(S^{d-1}) \lesssim (\log \h t) A_t \overset{\P}{\to} 0 \text{ as } t \to \infty,
\end{equation*}
which justifies \eqref{eq:positive}.
\qed

\printbibliography
\end{document}